\crefname{hypothesis}{Hypothesis}{Hypotheses}
\tikzset{ 
    dim_node/.style={
        rectangle, 
        draw=blue!80, 
        fill=blue!5, 
        thick, 
        text centered, 
        rounded corners
    },
    tree_edge/.style={
        thick, ->
    },
    edge from parent/.style={
        draw, thick, ->
    } 
}
\tikzstyle{level 1}=[level distance=12.5mm,sibling distance=3cm] 
\tikzstyle{level 2}=[level distance=12.5mm,sibling distance=1.5cm]
\tikzstyle{level 3}=[level distance=12.5mm,sibling distance=0.75cm]
\newcommand{\etal}{\textit{et al.}~}
\title{High-order Adaptive Rank Integrators for Multi-scale Linear Kinetic Transport Equations in the Hierarchical Tucker Format \thanks{Submitted to the editors DATE.
\funding{W.A. Sands and J.-M. Qiu wish to acknowledge support provided by Department of Energy DE-SC0023164, NSF NSF-DMS-2111253, and the Air Force Office of Scientific Research FA9550-22-1-0390. W. Guo was supported by the NSF NSF-DMS-2111383 and the Air Force Office of Scientific Research FA9550-18-1-0257. 
%T. Xiong wishes to acknowledge support provided by NSFC No. 92270112 and NSF of Fujian Province No. 2023J02003.
}}}
\author{William A. Sands\thanks{Department of Mathematical Sciences, University of Delaware, Newark, DE, 19716, United States 
  (\email{wsands@udel.edu}).}
\and Wei Guo\thanks{Department of Mathematics and Statistics, Texas Tech University, Lubbock, TX, 70409, United States (\email{weimath.guo@ttu.edu}).}
\and Jing-Mei Qiu\thanks{Department of Mathematical Sciences, University of Delaware, Newark, DE, 19716, United States 
  (\email{jingqiu@udel.edu}).}
\and Tao Xiong\thanks{School of Mathematical Sciences, University of Science and Technology of China, Hefei, Anhui 230026, People’s Republic of China 
  (\email{taoxiong@ustc.edu.cn}).}
}
\begin{document}

\maketitle

% Required
\begin{abstract}
    In this paper, we present a new adaptive rank approximation technique for computing solutions to the high-dimensional linear kinetic transport equation. The approach we propose is based on a macro-micro decomposition of the kinetic model in which the angular domain is discretized with a tensor product quadrature rule under the discrete ordinates method. To address the challenges associated with the curse of dimensionality, the proposed low-rank method is cast in the framework of the hierarchical Tucker decomposition. The adaptive rank integrators we propose are built upon high-order discretizations for both time and space. In particular, this work considers implicit-explicit discretizations for time and finite-difference weighted-essentially non-oscillatory discretizations for space. The high-order singular value decomposition is used to perform low-rank truncation of the high-dimensional time-dependent distribution function. The methods are applied to several benchmark problems, where we compare the solution quality and measure compression achieved by the adaptive rank methods against their corresponding full-grid methods. We also demonstrate the benefits of high-order discretizations in the proposed low-rank framework.
\end{abstract}

% REQUIRED
\begin{keywords}
Kinetic transport equations, adaptive low-rank approximation, hierarchical Tucker decomposition, high-order, asymptotic-preserving
\end{keywords}

% REQUIRED
\begin{MSCcodes}
35Q85, 65F55, 65L04, 65M06, 65M50
\end{MSCcodes}

%
% Discuss the purpose of the report
%
\section{Introduction}

% Overview
Simulating linear kinetic transport equations is critical to understanding physical processes found in diverse disciplines such as nuclear engineering, astrophysics, and computational medicine. Kinetic models for transport capture the evolution of a high-dimensional probability density function $f \left(\mathbf{x}, \mathbf{\Omega}, t \right)$ which represents the probability of finding a particle, such as a neutron or photon, at position $\mathbf{x} \in \mathbb{R}^{d}$, moving in the direction $\mathbf{\Omega} \in \mathbb{S}^{d-1}$, at time $t$. The storage of the distribution function in the fully discrete setting is known to suffer from the curse of dimensionality (CoD). This issue is further compounded by the need to resolve multi-scale effects such as collisions with other particles and interactions with the background material. Depending on the relative importance of the terms, the equations can also change type in different physical regimes. In these circumstances, naive discretization techniques can become inefficient because of highly restrictive stability conditions or inaccurate due to physical inconsistencies associated with approximate models. These characteristics have motivated the development of asymptotic-preserving (AP) methods which capture the macroscopic behavior at the discrete level.

% Discuss some other methods for transport
The technical report by Brunner \cite{Brunner2002} provides an excellent overview of several classes of numerical methods for transport problems, including Monte Carlo methods, discrete ordinates methods, and moment methods based on expansions in the spherical harmonic basis. Monte Carlo methods \cite{vassiliev2017mc,dimarco2018APMC,krotz2023hybrid}, which track the evolution of particles along characteristics, are conceptually simple and highly parallel. However, these methods require many simulation particles to combat statistical noise, and the solution quality degrades in more rarefied regions containing fewer particles. The discrete ordinates ($S_{N}$) \cite{MillerLewis93,azmy2010advances} method restricts the angular component of the solution to a set of directions on the unit sphere. In spite of its efficiency, its primary disadvantages include a loss of rotational invariance and numerical artifacts known as ray effects when the angular domain is under-resolved. On the other hand, spherical harmonics discretizations \cite{McClarrenTRTEspherical2008} preserve the rotational invariance, but encounter challenges in highly kinetic regions, as the quality of the spectral approximation degrades in the presence of angular discontinuities \cite{McClarrenTRTEspherical2008}. Filtered spherical harmonics methods \cite{mcclarren2010robust,frank2016convergence} can improve the quality of the solutions, but many challenges remain such as the enforcement of more general boundary conditions. Adaptive sparse grid representations based on tensor products of wavelets have also been considered as a way to address the high dimensionality in kinetic simulations \cite{GrellaSchwab,GuoChengSparseGridDG2016,GuoChengSparseGridDG2017}. Another interesting approach was proposed by Zhang \etal \cite{zhang2023asymptotic} in which the semi-Lagrangian method was used to remove the stiffness of convection terms in the intermediate regimes, resulting in a method with uniform unconditional stability. Peng \etal \cite{peng2024RBM} recently proposed a reduced basis method for the linear transport equation that constructs subspaces for the components of the macro-micro decomposition using an efficient iterative algorithm, which also respects the diffusion limit.

% Review of low-rank methods for transport
Adaptive rank integrators, such as the dynamical low-rank (DLR) and step-and-truncate (SAT) methods, have emerged as a popular tool for accelerating the solution of high-dimensional kinetic equations and reducing their memory footprint \cite{kormann2015TTvlasov,einkemmer2021mass,GuoVlasovDLRVlasovDynamics,GuoVlasovLoMacDG2023,Einkemmer2024alfven}. Application of these methods to problems of interest in the transport community is fairly recent. In \cite{pengDLR2020spherical-harmonics}, Peng \etal developed a low-rank scheme using a spherical harmonics angular discretization. These ideas were later used to develop high-order-low-order schemes that evolve the high fidelity model using a low-rank scheme that is used to define a closure to a fluid system \cite{PengDLR2021Holo}. Ding \etal \cite{DingDLRdiffusion2021} analyzed the behavior of DLR methods in the diffusive regime of the linear transport equation, and they showed that low-rank methods are capable of capturing such limits. Einkemmer \etal \cite{EinkemmerDLR-AP} proposed a DLR method for the linear transport equation which is AP and can be second-order accurate in both time and space. Their low-rank scheme is built on the macro-micro decomposition and staggered grid discretization originally proposed by Lemou and Mieussens \cite{Lemou-Mieussens2008}. A similar macro-micro decomposition strategy was later applied to the BGK equation in the collisional setting to enforce the limiting Navier-Stokes equations \cite{EinkemmerDLR-AP-BGK} and, more recently, the Lenard-Bernstein equation \cite{coughlin2024robust}. Peng and McClarren proposed a low-rank scheme which is first-order accurate in both time and space \cite{PengDLR2023discrete-ordinates} using the $S_{N}$ method. Their approach applies a low-rank discretization to each octant of the sphere which is evolved using a transport sweep with source iteration. In \cite{hu2022adaptiveBoltzmann}, Hu and Wang presented an adaptive low-rank scheme for the nonlinear Boltzmann equation and combined this with a Fourier spectral method to approximate the collision operator. An energy stable DLR scheme for transport problems was recently proposed by Einkemmer \etal \cite{Einkemmer_AP_DLR_energy2024} based on the spherical harmonics discretization of the angular domain. Using a certain time step restriction, they showed that the resulting scheme obeyed an energy dissipation principle.

% Compare with previous approaches: novelty of this paper
Many of the aforementioned approaches use a projector splitting technique \cite{lubich2014projector,einkemmer2018VPsplitting} or the ``unconventional" basis update Galerkin (BUG) integrator \cite{ceruti2022unconventional} to update the low-rank basis independently. A limitation of these earlier approaches is that they require a fixed rank. Recent extensions of the BUG integrator have been proposed to achieve rank adaptivity \cite{ceruti2022rank-adaptive,HauckPCdynamic23}. In contrast, the adaptive rank SAT framework \cite{GuoVlasovFlowMap2022} avoids the projector splitting used by the DLR approach and can be combined with traditional high-order discretizations, such as the one proposed in \cite{JangAP-IMEX-DG-2015}. Additionally, the SAT framework provides considerable flexibility in the arrangement of the dimension trees used to represent tensors. In this paper, we represent high-dimensional functions using the hierarchical Tucker tensor (HTT) format \cite{HackbuschKuhnHT2009,GrasedyckHTT2010}, which has been proven to be effective in taming the challenges associated with the CoD in kinetic simulations. We achieve notable improvements in efficiency through the use of a high-order singular value decomposition (HOSVD) compression, which is used to truncate nodes of the dimension tree with small singular values. Such benefits have also been observed in similar work by Truong \etal \cite{truong2024tensor} who considered tensor network approaches in applications of the high-dimensional, time-independent neutron transport equations.

% Summary of contribution 
This work contributes several novel low-rank approximation techniques for solving the high-dimensional linear kinetic transport equation. Our approach utilizes a macro-micro decomposition of the distribution function in the kinetic model, which ensuring the scheme recovers the limiting linear diffusion equation under the appropriate scalings. The proposed methods integrate traditional high-order AP methods with novel tensor formats that support rank adaptivity. The discretization of space is performed using a high-order weighted essentially non-oscillatory (WENO) finite-difference method, while the temporal discretization is performed using high-order implicit-explicit (IMEX) Runge-Kutta (RK) methods with the globally stiffly accurate (GSA) property to handle the stiffness associated with the diffusion limit. The discretization of the angular domain is performed using the $S_{N}$ method. In this work, we consider the Chebyshev-Legendre (CL) quadrature nodes, which form a tensor product quadrature rule on the surface of the unit sphere. We also introduce a projection technique to enforce a condition on the zeroth angular moment of $g$ that is fundamental to the macro-micro decomposition. Additionally, we discuss the evaluation of Hadamard products, which are needed in transport applications to account for interactions with the background material, and highlight the computational challenges associated with these terms in the low-rank setting. Our numerical results demonstrate the capabilities of the proposed methods on two example problems from the transport literature. The first example considers a strongly varying scattering-cross section, which combines characteristics of both free-streaming and collisional regimes, and the second is a modification of the lattice problem, which contains discontinuities in material cross-sections. The low-rank structures present in these benchmark problems are carefully examined by studying the growth in the hierarchical rank and the decay of the singular values at the nodes of the tensor trees. We find that the proposed methods can significantly reduce the storage requirements.

% Organization of the paper
The organization of the remainder of this paper begins with an overview of the problem formulation in \cref{sec:formulation}. In \cref{sec:discretization main}, we provide the relevant details of the discretization adopted by the proposed methods. We begin with the time discretization for the system and its AP property, before moving to the discretization of the angular and spatial domains. Then, we connect these elements to the HTT format and address some complications relevant to transport applications. We also establish an AP property, which accounts for the truncation error in the low-rank approximation. \Cref{sec:results} presents the numerical results which considers several benchmark problems from the literature. We conclude the paper with a brief summary in \cref{sec:conclusion main}.

\section{The Linear Transport Equation and Macro-micro Decomposition}
\label{sec:formulation}

We consider the time-dependent linear transport equation under a diffusive scaling:
\begin{equation}
    \partial_{t} f + \frac{1}{\epsilon} \mathbf{\Omega} \cdot \nabla_{x} f = \frac{\sigma_{s}}{\epsilon^{2}} \Big( \langle f \rangle_{\mathbf{\Omega}} - f \Big) - \sigma_{a} f + Q, \quad (\mathbf{x}, \mathbf{\Omega}) \in D \times \mathbb{S}^{2}, \label{eq:linear kinetic equation}
\end{equation}
where $D \subset \mathbb{R}^2$ in this work, and $\mathbb{S}^{2}$ is the surface of the unit sphere. Additionally, we let $\sigma_{a} \left(\mathbf{x}\right) \geq 0$ and $\sigma_{s} \left(\mathbf{x}\right) > 0$ denote the absorption and scattering cross-sections for the background material and $Q(\mathbf{x}, t)$ is the radiant source. Additionally, the Knudsen number $\epsilon$ characterizes the strength of the collisions in the problem. In this work, we only consider isotropic collisions, so the relevant collision operator consists of integration over the unit sphere, specifically
\begin{equation*}
    \langle f \rangle_{\mathbf{\Omega}} := \frac{1}{4\pi} \int_{\mathbb{S}^{2}} f \left( \mathbf{x}, \mathbf{\Omega}', t \right)  \, d\mathbf{\Omega}'.
\end{equation*}
Although it is more conventional to use inflow-outflow type boundary conditions, we shall restrict ourselves to periodic domains and leave the complication of boundary conditions to future work.

One of the challenges associated with the kinetic model \eqref{eq:linear kinetic equation} is the capturing the range of behavior observed as $\epsilon$ changes. When $\epsilon = \mathcal{O}(1)$, the kinetic equation behaves as a hyperbolic equation, but transitions to a parabolic or diffusive regime when $\epsilon \ll 1$. Lemou and Mieussens \cite{Lemou-Mieussens2008} proposed the macro-micro decomposition, which separates $f$ into an equilibrium and non-equilibrium components, namely
\begin{equation}
    f \left(\mathbf{x}, \mathbf{\Omega},t \right) = \rho \left(\mathbf{x},t \right) + \epsilon g \left(\mathbf{x}, \mathbf{\Omega},t\right), \label{eq:decomposition}
\end{equation}
where $\rho = \langle f \rangle_{\mathbf{\Omega}}$, and we have the condition $\langle g \rangle_{\mathbf{\Omega}} = 0$. Using the decomposition \eqref{eq:decomposition}, it can be shown that the linear kinetic equation \eqref{eq:linear kinetic equation} is equivalent to the system
\begin{align}
    &\partial_{t} \rho + \nabla_{x} \cdot \left( \left\langle \mathbf{\Omega} g \right\rangle_{\mathbf{\Omega}} \right) = - \sigma_{a} \rho + Q, \label{eq:macro} \\
    &\partial_{t} g + \frac{1}{\epsilon} \Big( I - \Pi \Big) \Big( \nabla_{x} \cdot \left(  \mathbf{\Omega} g \right) \Big) + \frac{1}{\epsilon^{2}}\nabla_{x} \cdot \left( \mathbf{\Omega} \rho \right) = -\frac{\sigma_{s}}{\epsilon^{2}} g - \sigma_{a} g, \label{eq:micro}
\end{align}
where $I$ is the identity operator, and we use the notation
\begin{equation*}
    \Big( I - \Pi \Big) \Big( \nabla_{x} \cdot \left(  \mathbf{\Omega} g \right) \Big) := \nabla_{x} \cdot \left(  \mathbf{\Omega} g \right) - \nabla_{x} \cdot \left( \left\langle  \mathbf{\Omega} g \right\rangle_{\mathbf{\Omega}} \right).
\end{equation*}

The macro-micro system \eqref{eq:macro}-\eqref{eq:micro} preserves the asymptotic limit of the kinetic equation \eqref{eq:linear kinetic equation} obtained from a Chapman-Enskog expansion \cite{Lemou-Mieussens2008} when the initial data is \textit{well-prepared}, which can be defined as follows.
\begin{definition}
Let $\epsilon > 0$ denote a small parameter. The initial data $f_{0}^{\epsilon}$ are said to be well-prepared if they admit an asymptotic expansion of the form
\begin{equation*}
    f_{0}^{\epsilon} = f_{0} + \epsilon f_{1} + \epsilon^2 f_{2} + \cdots
\end{equation*}
such that each $f_{j}$ is consistent with the corresponding term in the formal expansion of the solution. This ensures compatibility with the leading-order dynamics observed in the Chapman-Enskog analysis and prevents the formation of spurious initial layers or fast transients in the limit $\epsilon \rightarrow 0$ that would otherwise need to be resolved by the discretization.
\end{definition}

With this in mind, we now formally state the AP property for the macro-micro decomposition, as summarized in the following theorem. A brief proof is included for completeness.
\begin{theorem}
    \label{thm:MM AP property}
    If the initial data is well-prepared, then the macro-micro decomposition \eqref{eq:macro}--\eqref{eq:micro} preserves the diffusion limit of the kinetic equation. In particular, as $\epsilon \to 0$, the system reduces to the linear diffusion equation
    \begin{equation}
        \label{eq:diffusion limit}
        \partial_t \rho - \nabla_x \cdot \left( \frac{1}{3 \sigma_s} \nabla_x \rho \right) = - \sigma_a \rho + Q.
    \end{equation}
\end{theorem}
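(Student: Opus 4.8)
The plan is to perform a formal Hilbert/Chapman-Enskog expansion on the micro equation \eqref{eq:micro} to extract the leading-order form of $g$ in terms of $\rho$, and then substitute this back into the macro equation \eqref{eq:macro} to recover the diffusion equation \eqref{eq:diffusion limit}. First I would posit expansions $\rho = \rho^{(0)} + \epsilon \rho^{(1)} + \cdots$ and $g = g^{(0)} + \epsilon g^{(1)} + \cdots$, which is justified because the initial data is well-prepared (no initial layer). Then I would insert these into \eqref{eq:micro} and match powers of $\epsilon$.

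The key step is the balance at order $\epsilon^{-2}$. Collecting the $\mathcal{O}(\epsilon^{-2})$ terms in \eqref{eq:micro} gives $\nabla_x \cdot (\mathbf{\Omega}\rho^{(0)}) = -\sigma_s g^{(0)}$, hence
\begin{equation*}
    g^{(0)} = -\frac{1}{\sigma_s}\,\mathbf{\Omega}\cdot\nabla_x \rho^{(0)},
\end{equation*}
using that $\nabla_x\cdot(\mathbf{\Omega}\rho) = \mathbf{\Omega}\cdot\nabla_x\rho$ since $\mathbf{\Omega}$ is constant in $\mathbf{x}$. One should check this is consistent with the constraint $\langle g^{(0)}\rangle_{\mathbf{\Omega}} = 0$, which holds because $\langle \mathbf{\Omega}\rangle_{\mathbf{\Omega}} = 0$ by symmetry of the sphere. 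Next I would substitute $g^{(0)}$ into the flux term of the macro equation \eqref{eq:macro}, which at leading order reads $\partial_t \rho^{(0)} + \nabla_x\cdot\langle \mathbf{\Omega} g^{(0)}\rangle_{\mathbf{\Omega}} = -\sigma_a \rho^{(0)} + Q$. Using $g^{(0)} = -\sigma_s^{-1}\mathbf{\Omega}\cdot\nabla_x\rho^{(0)}$,
\begin{equation*}
    \langle \mathbf{\Omega} g^{(0)}\rangle_{\mathbf{\Omega}} = -\frac{1}{\sigma_s}\,\langle \mathbf{\Omega}\otimes\mathbf{\Omega}\rangle_{\mathbf{\Omega}}\,\nabla_x\rho^{(0)} = -\frac{1}{3\sigma_s}\nabla_x\rho^{(0)},
\end{equation*}
where the moment identity $\langle \mathbf{\Omega}\otimes\mathbf{\Omega}\rangle_{\mathbf{\Omega}} = \tfrac{1}{3}I$ follows from the normalized uniform measure on $\mathbb{S}^2$. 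Plugging this in yields exactly \eqref{eq:diffusion limit} for $\rho^{(0)}$, and dropping the superscript in the limit $\epsilon\to 0$ completes the argument.

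The main obstacle — really the only subtle point in a formal proof — is justifying that the projection operator $I - \Pi$ does not interfere with the leading-order balance: the term $\tfrac{1}{\epsilon}(I-\Pi)(\nabla_x\cdot(\mathbf{\Omega}g))$ sits at order $\epsilon^{-1}$ relative to $g^{(0)}$, i.e. at $\mathcal{O}(\epsilon^0)$ overall, so it only enters the next-order balance and is harmless for the leading-order limit; one simply needs to note that $g^{(0)}$ already satisfies $\langle g^{(0)}\rangle_{\mathbf{\Omega}}=0$ so the constraint is propagated consistently. A fully rigorous proof would additionally require uniform-in-$\epsilon$ a priori estimates to control the remainder terms and pass to the limit in a suitable function space, but since the theorem is stated at the formal level and a brief proof ``for completeness'' is promised, I would present only the matched-asymptotics computation above, remarking that the well-preparedness hypothesis is precisely what rules out the $\mathcal{O}(1/\epsilon)$ transients that would otherwise obstruct the expansion.
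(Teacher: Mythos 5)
Your proposal is correct and follows essentially the same route as the paper: the paper's proof likewise uses well-preparedness to rule out initial layers, extracts the leading-order balance $g = -\sigma_s^{-1}\,\mathbf{\Omega}\cdot\nabla_x\rho$ from the micro equation as $\epsilon\to 0$, and substitutes into the macro equation via the identity $\left\langle \mathbf{\Omega}\otimes\mathbf{\Omega}\right\rangle_{\mathbf{\Omega}} = I/3$. Your write-up is simply a more explicit version of the same formal asymptotic argument, including the (correct but unstated in the paper) observations that $\langle g^{(0)}\rangle_{\mathbf{\Omega}}=0$ is automatically satisfied and that the $(I-\Pi)$ term only enters at higher order.
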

\begin{proof}
    Since the initial data is assumed to be well-prepared, then $g$ contains no initial layers. In the limit $\epsilon \rightarrow 0$, equation \eqref{eq:micro} reduces to
    \begin{equation*}
    g = - \frac{1}{\sigma_{s}} \mathbf{\Omega} \cdot \nabla_{x} \rho.
    \end{equation*}
    Substituting this result into the macroscopic equation \eqref{eq:macro}, and using the identity $\left\langle  \mathbf{\Omega} \otimes \mathbf{\Omega} \right\rangle_{\mathbf{\Omega}} = I/3$, we obtain the diffusion equation \eqref{eq:diffusion limit}. This completes the proof.
\end{proof}

\section{Discretization of the Macro-micro System}
\label{sec:discretization main}

This section provides details for the discretization of the macro-micro system used to build the proposed low-rank methods. We first discuss the IMEX-RK discretization for problems with relaxations, before introducing the angular discretization. Two different dimension trees are presented for representing tensors, and their interplay with the discretizations is discussed. We also analyze asymptotic properties of the method and consider the impact of tensor truncation on the preservation of the diffusion limit. Projection techniques are introduced to enforce physical constraints in the proposed low-rank methods. Finally, we highlight the complications created by element-wise products between the material cross-sections and solution tensors.

\subsection{IMEX Time Discretizations}
\label{subsec:IMEX}

Due to the stiffness associated with the diffusion limit of the system system \eqref{eq:macro}-\eqref{eq:micro}, we shall restrict the discussion to IMEX discretizations with the GSA property \cite{BoscarinoAP-IMEX2013}. It is customary to treat terms of $\mathcal{O}(1/\epsilon^{2})$ implicitly while those of size $o(1/\epsilon^{2})$ are treated explicitly, as the former will become extremely stiff in the limit $\epsilon \rightarrow 0$. The basic building block is the first-order accurate semi-discrete system
\begin{align}
    &\rho^{n+1} = \rho^{n}  - \Delta t \nabla_{x} \cdot \left( \left\langle \mathbf{\Omega} g^{n} \right\rangle_{\mathbf{\Omega}} \right) - \Delta t \sigma_{a} \rho^{n} + \Delta t Q^{n}, \label{eq: rho IMEX1} \\
    &g^{n+1} = g^{n} -\frac{\Delta t}{\epsilon} \Big( I - \Pi \Big) \Big( \nabla_{x} \cdot \left(  \mathbf{\Omega} g^{n} \right) \Big) - \frac{\Delta t}{\epsilon^{2}} \nabla_{x} \cdot \left( \mathbf{\Omega} \rho^{n+1} \right) - \frac{\Delta t \sigma_{s}}{\epsilon^{2}} g^{n+1} - \sigma_{a} \Delta t g^{n}. \label{eq: g IMEX1}
\end{align}
Using algebra, the solution $g^{n+1}$ in the second equation \eqref{eq: g IMEX1} is found to be
\begin{equation*}
    g^{n+1} = \frac{1}{\epsilon^{2} + \Delta t \sigma_{s}} \Bigg[ \epsilon^{2}g^{n} -\epsilon \Delta t \Big( I - \Pi \Big) \Big( \nabla_{x} \cdot \left(  \mathbf{\Omega} g^{n} \right) \Big) - \Delta t \nabla_{x} \cdot \left( \mathbf{\Omega} \rho^{n+1} \right) - \epsilon^{2} \Delta t \sigma_{a} g^{n} \Bigg].
\end{equation*}
Note that the coefficient multiplying the terms in brackets is spatially dependent, in the general case, and its action is defined point-wise. This form of the macroscopic equation is used in practice, although the form \eqref{eq: g IMEX1} is more convenient for the application of higher-order IMEX schemes, which we now describe.
 
IMEX-RK schemes of higher order can be represented by two Butcher tables \cite{ascher1997implicit}
\begin{equation}
    \label{eq:IMEX double tableau}
    \def\arraystretch{1.5}
    \begin{tabular}{c|c}
        $\widetilde{\mathbf{c}}$ & $\widetilde{A}$ \\
        \hline
                               & $\widetilde{\mathbf{b}}^{T}$
    \end{tabular}
    \quad\quad
    \def\arraystretch{1.5}
    \begin{tabular}{c|c}
        $\mathbf{c}$ & $A$ \\
        \hline
                               & $\mathbf{b}^{T}$
    \end{tabular}
\end{equation}
where the matrix $\widetilde{A}$ represents the explicit part of the discretization, so $\widetilde{a}_{ij} = 0$, for $j \geq i$. Its implicit counterpart $A$ is assumed to be lower triangular so that $a_{ij} = 0$, when $j > i$. Additionally, the entries of the coefficient vectors are defined as
\begin{equation*}
    \widetilde{c}_{i} = \sum_{j=1}^{i-1} \widetilde{a}_{ij}, \quad c_{i} = \sum_{j=1}^{i} a_{ij}, \quad i = 1, 2, \cdots, s.
\end{equation*}
The vectors $\widetilde{\mathbf{b}}$ and $\mathbf{b}$ represent the quadrature weights for the stages of the RK scheme.

Applying the IMEX-RK discretization to the system \eqref{eq:macro}-\eqref{eq:micro}, we obtain the following semi-discrete formulation:
\begin{align}
    \rho^{n+1} &= \rho^{n}  + \Delta t \sum_{\ell=1}^{s} \widetilde{b}_{\ell} \Bigg( -\nabla_{x} \cdot \left( \left\langle \mathbf{\Omega} g^{(\ell)} \right\rangle_{\mathbf{\Omega}} \right) - \sigma_{a} \rho^{(\ell)} + Q^{(\ell)} \Bigg), \label{eq:IMEX rho final} \\
    g^{n+1} &= g^{n} - \frac{\Delta t}{\epsilon} \sum_{\ell=1}^{s} \widetilde{b}_{\ell} \Big( I - \Pi \Big) \Big( \nabla_{x} \cdot \left(  \mathbf{\Omega} g^{(\ell)} \right) \Big) \label{eq:IMEX g final} \\ 
    &\qquad - \Delta t \sum_{\ell=1}^{s} \widetilde{b}_{\ell} \sigma_{a} g^{(\ell)} - \frac{\Delta t}{\epsilon^{2}} \sum_{\ell=1}^{s} b_{\ell} \Big( \nabla_{x} \cdot \left( \mathbf{\Omega} \rho^{(\ell)} \right) + \sigma_{s} g^{(\ell)} \Big). \nonumber
\end{align}
The values of $\rho^{(\ell)}$ and $g^{(\ell)}$ in each stage $\ell = 1, 2, \cdots, s$ are calculated as
\begin{align}
    \rho^{(\ell)} &= \rho^{n}  + \Delta t \sum_{j=1}^{\ell-1} \widetilde{a}_{\ell j} \Big( -\nabla_{x} \cdot \left( \left\langle \mathbf{\Omega} g^{(j)} \right\rangle_{\mathbf{\Omega}} \right) - \sigma_{a} \rho^{(j)} + Q^{(j)} \Big), \label{eq: IMEX rho stage} \\
    g^{(\ell)} &= g^{n} - \frac{\Delta t}{\epsilon} \sum_{j=1}^{\ell-1} \widetilde{a}_{\ell j} \Big( I - \Pi \Big) \Big( \nabla_{x} \cdot \left(  \mathbf{\Omega} g^{(j)} \right) \Big) \label{eq: IMEX g stage} \\ 
    &\qquad - \Delta t \sum_{j=1}^{\ell-1} \widetilde{a}_{\ell j} \sigma_{a} g^{(j)} - \frac{\Delta t}{\epsilon^2} \sum_{j=1}^{\ell} a_{\ell j} \Bigg( \nabla_{x} \cdot \left( \mathbf{\Omega} \rho^{(j)} \right) + \sigma_{s} g^{(j)} \Bigg). \nonumber
\end{align}
Schemes that are GSA have the property that the numerical solution at $t^{n+1}$ is equivalent to the last internal stage of the RK method \cite{dimarco2013asymptotic}, so it follows that $$\rho^{n+1} = \rho^{(s)}, \quad g^{n+1} = g^{(s)}.$$ Note that the solution of equation \eqref{eq: IMEX g stage} at each stage $\ell = 1, 2, \cdots, s$ is
\begin{multline}
    \label{eq:IMEX g stage explicit}
    g^{(\ell)} = \frac{1}{\epsilon^{2} + a_{\ell \ell} \Delta t \sigma_{s}} \Bigg[ \epsilon^{2} g^{n} - \epsilon \Delta t \sum_{j=1}^{\ell-1} \widetilde{a}_{\ell j} \Big( I - \Pi \Big) \Big( \nabla_{x} \cdot \left(  \mathbf{\Omega} g^{(j)} \right) \Big) - \epsilon^{2} \Delta t \sum_{j=1}^{\ell-1} \widetilde{a}_{\ell j} \sigma_{a} g^{(j)} \\  - \Delta t \sum_{j=1}^{\ell} a_{\ell j} \nabla_{x} \cdot \left( \mathbf{\Omega} \rho^{(j)} \right) -  \Delta t \sum_{j=1}^{\ell-1} a_{\ell j} \sigma_{s} g^{(j)} \Bigg].
\end{multline}

The IMEX discretization defined by equations \eqref{eq:IMEX rho final}-\eqref{eq: IMEX g stage} possesses an AP property, which can be established in a relatively straightforward manner if discretizations for both space and angle are treated as exact. We provide a short proof of this statement which is similar to that given in Section 2.2 of \cite{BoscarinoAP-IMEX2013}. This leads to the following theorem. 
\begin{theorem}
    \label{thm:MM AP property IMEX}
    If the discretizations for the spatial and angular variables are exact and the initial data is well-prepared, then the GSA IMEX discretization of the macro-micro decomposition, given by equations \eqref{eq:IMEX rho final}-\eqref{eq: IMEX g stage}, reduces to a consistent semi-discrete approximation of the linear diffusion equation \eqref{eq:diffusion limit} as $\epsilon \rightarrow 0$.
\end{theorem}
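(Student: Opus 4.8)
The plan is to establish the AP property by inserting a Hilbert-type expansion in powers of $\epsilon$ into the stage relations \eqref{eq: IMEX rho stage}--\eqref{eq:IMEX g stage explicit} and tracking the leading-order balance, mirroring the continuous argument used for \cref{thm:MM AP property}. Concretely, I would write $\rho^{(\ell)} = \rho_0^{(\ell)} + \epsilon\,\rho_1^{(\ell)} + \cdots$ and $g^{(\ell)} = g_0^{(\ell)} + \epsilon\, g_1^{(\ell)} + \cdots$ for each stage $\ell = 1,\dots,s$, together with analogous expansions of $\rho^{n}$ and $g^{n}$. Well-preparedness of the initial data enters twice here: it guarantees that these expansions exist with bounded $O(1)$ coefficients (in particular $g^{n}$ carries no $O(1/\epsilon)$ initial layer), and it supplies the equilibrium relation $\sigma_s g_0^{n} = -\,\mathbf{\Omega}\cdot\nabla_x\rho_0^{n}$, which keeps the expansion self-consistent from one time step to the next.

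First I would analyze the micro stage equation \eqref{eq:IMEX g stage explicit}. Since the implicit Butcher matrix $A$ of a GSA IMEX pair has nonzero diagonal entries, the prefactor $1/(\epsilon^2 + a_{\ell\ell}\Delta t\,\sigma_s)$ tends to $1/(a_{\ell\ell}\Delta t\,\sigma_s)$ as $\epsilon\to 0$, so the scheme remains well-defined in the limit. Clearing the denominator in \eqref{eq:IMEX g stage explicit} and retaining only the $O(1)$ contributions — every term carrying an explicit factor $\epsilon$ or $\epsilon^2$ drops out — reduces the stage relation to $\sum_{j=1}^{\ell} a_{\ell j}\bigl(\sigma_s g_0^{(j)} + \nabla_x\cdot(\mathbf{\Omega}\rho_0^{(j)})\bigr) = 0$ for every $\ell$. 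Because $A$ is lower triangular with nonzero diagonal, a forward induction on $\ell$ (the case $\ell=1$ is immediate, and at stage $\ell$ all contributions with $j<\ell$ already vanish) yields $\sigma_s g_0^{(j)} = -\nabla_x\cdot(\mathbf{\Omega}\rho_0^{(j)}) = -\,\mathbf{\Omega}\cdot\nabla_x\rho_0^{(j)}$ at every stage, using that $\mathbf{\Omega}$ is independent of $\mathbf{x}$.

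Next I would substitute this leading-order slaving relation into the macro stage equation \eqref{eq: IMEX rho stage}. With the identity $\langle\mathbf{\Omega}\otimes\mathbf{\Omega}\rangle_{\mathbf{\Omega}} = I/3$ one obtains $\nabla_x\cdot\langle\mathbf{\Omega} g_0^{(j)}\rangle_{\mathbf{\Omega}} = -\nabla_x\cdot\bigl(\frac{1}{3\sigma_s}\nabla_x\rho_0^{(j)}\bigr)$, so the $O(1)$ part of \eqref{eq: IMEX rho stage} becomes $\rho_0^{(\ell)} = \rho_0^{n} + \Delta t\sum_{j=1}^{\ell-1}\widetilde{a}_{\ell j}\bigl(\nabla_x\cdot(\frac{1}{3\sigma_s}\nabla_x\rho_0^{(j)}) - \sigma_a\rho_0^{(j)} + Q^{(j)}\bigr)$, which is exactly the $\ell$-th stage of the explicit Runge--Kutta method with tableau $(\widetilde{\mathbf{c}},\widetilde{A},\widetilde{\mathbf{b}})$ applied to the linear diffusion equation \eqref{eq:diffusion limit}. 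Invoking the GSA property $\rho^{n+1} = \rho^{(s)}$, the full time step inherits the same structure, so $\rho_0^{n+1} = \rho_0^{(s)}$ is the Runge--Kutta update for \eqref{eq:diffusion limit}; since the explicit tableau defines a consistent Runge--Kutta method, this is a consistent semi-discrete approximation of \eqref{eq:diffusion limit}, which finishes the argument.

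The main obstacle I anticipate is the stagewise induction establishing $g_0^{(\ell)} = -\frac{1}{\sigma_s}\mathbf{\Omega}\cdot\nabla_x\rho_0^{(\ell)}$ at all stages: it rests on the structural fact that the leading principal submatrices of $A$ are invertible (equivalently $a_{\ell\ell}\neq 0$ for every $\ell$), which should be recorded as a property of the admissible GSA IMEX pairs. One must also verify that the $\rho^{(\ell)}$ appearing inside \eqref{eq:IMEX g stage explicit} possesses a finite limit as $\epsilon\to 0$ — it does, since it is assembled only from completed stages through \eqref{eq: IMEX rho stage} and contains no negative power of $\epsilon$ — and that the $O(\epsilon)$ remainders remain bounded, which is precisely what well-preparedness supplies. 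Beyond these points the computation is routine bookkeeping, paralleling the proof of \cref{thm:MM AP property}.
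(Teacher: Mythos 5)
Your proposal is correct and follows essentially the same route as the paper: both isolate the implicit sum $\sum_{j\le\ell} a_{\ell j}\bigl(\sigma_s g^{(j)} + \nabla_x\cdot(\mathbf{\Omega}\rho^{(j)})\bigr)$, use the invertibility of the lower-triangular matrix $A$ (your forward induction is just forward substitution, the paper multiplies by $A^{-1}$) to obtain the stagewise slaving relation $g^{(\ell)} = -\sigma_s^{-1}\mathbf{\Omega}\cdot\nabla_x\rho^{(\ell)}$, and then use $\langle\mathbf{\Omega}\otimes\mathbf{\Omega}\rangle_{\mathbf{\Omega}} = I/3$ together with the GSA property to recover an explicit RK discretization of \eqref{eq:diffusion limit}. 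Your explicit Hilbert expansion and your flagging of the requirement $a_{\ell\ell}\neq 0$ are slightly more careful bookkeeping of the same argument, not a different one.
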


\begin{proof}
    First, consider the stage update \eqref{eq: IMEX g stage}, which can be written in the form
    \begin{multline*}
        \Delta t \sum_{j=1}^{\ell} a_{\ell j} \Bigg( \nabla_{x} \cdot \left( \mathbf{\Omega} \rho^{(j)} \right) + \sigma_{s} g^{(j)} \Bigg) = \epsilon^{2}\left( g^{n} - g^{(\ell)} \right) \\ - \epsilon \Delta t \sum_{j=1}^{\ell-1} \widetilde{a}_{\ell j} \Big( I - \Pi \Big) \Big( \nabla_{x} \cdot \left(  \mathbf{\Omega} g^{(j)} \right) \Big) - \epsilon^{2}\Delta t \sum_{j=1}^{\ell-1} \widetilde{a}_{\ell j} \sigma_{a} g^{(j)}.
    \end{multline*}
    Since the matrix $A$ in equation \eqref{eq:IMEX double tableau} is invertible, we can write each stage as
    \begin{multline*}
        \Delta t \Bigg( \nabla_{x} \cdot \left( \mathbf{\Omega} \rho^{(\ell)} \right) + \sigma_{s} g^{(\ell)} \Bigg) = \epsilon^{2}\sum_{j=1}^{\ell}\omega_{\ell j}\left( g^{n} - g^{(j)} \right) \\ - \epsilon \Delta t \sum_{j=1}^{\ell} \sum_{k=1}^{\ell-1} \omega_{\ell j}\widetilde{a}_{\ell k} \Big( I - \Pi \Big) \Big( \nabla_{x} \cdot \left(  \mathbf{\Omega} g^{(k)} \right) \Big) - \epsilon^{2}\Delta t \sum_{j=1}^{\ell} \sum_{k=1}^{\ell-1} \omega_{\ell j} \widetilde{a}_{\ell k} \sigma_{a} g^{(k)},
    \end{multline*}
    where the coefficients $\omega_{\ell j}$ denote the elements of $A^{-1}$. Using the assumption that $g$ is well-prepared, then in the limit $\epsilon \rightarrow 0$, the above equation simplifies to
    \begin{equation*}
        g^{(\ell)} = -\frac{1}{\sigma_{s}}\nabla_{x} \cdot \left( \mathbf{\Omega} \rho^{(\ell)} \right), \quad \ell = 1,2, \cdots, s. 
    \end{equation*}
    Multiplying this result by $\mathbf{\Omega}$ and applying $\left\langle \cdot \right\rangle_{\mathbf{\Omega}}$, we find that
    \begin{equation}
        \left\langle \mathbf{\Omega} g^{(\ell)} \right\rangle_{\mathbf{\Omega}} = -\frac{1}{\sigma_{s}} \nabla_{x} \cdot \left( \left\langle \mathbf{\Omega} \otimes \mathbf{\Omega} \right\rangle \rho^{(\ell)} \right) = -\frac{1}{3\sigma_{s}} \nabla_{x}  \rho^{(\ell)}, \quad \ell = 1,2,\cdots,s. \label{eq:limiting moment of g at stages}
    \end{equation}
    Inserting this result into the stage update \eqref{eq: IMEX rho stage}, we obtain the scheme
    \begin{equation*}
        \rho^{(\ell)} = \rho^{n}  + \Delta t \sum_{j=1}^{\ell-1} \widetilde{a}_{\ell j} \nabla_{x} \cdot \left( \frac{1}{3 \sigma_{s}} \nabla_{x}  \rho^{(j)} \right) + \Delta t \sum_{j=1}^{\ell-1} \widetilde{a}_{\ell j} \Big(- \sigma_{a} \rho^{(j)} + Q^{(j)} \Big), \quad \ell = 1,2,\cdots,s,
    \end{equation*}
    and by the GSA property of the scheme, we have that $$\rho^{n+1} = \rho^{(s)}.$$ This is an explicit temporal discretization that is consistent with limiting diffusion equation \eqref{eq:diffusion limit}. This completes the proof.
\end{proof}

\subsection{Angular Discretization}
\label{subsec:SN discretization}

In this work, we consider tensor product quadrature rules on the unit sphere in the framework of the $S_{N}$ method, which approximates the angular dependence of the solution at a fixed collection of points on the unit sphere $\mathbb{S}^{2}$. Recall that the unit vector $\mathbf{\Omega} \in \mathbb{S}^{2}$ can be defined in spherical coordinates as
\begin{equation*}
    \mathbf{\Omega} = \Big( \cos (\theta) \sin (\phi), \sin (\theta) \sin (\phi), \cos (\phi) \Big),
\end{equation*}
where $\theta \in [0, 2\pi)$ is the azimuthal angle, and $\phi \in [0, \pi]$ is the polar angle. Making the substitution $\mu = \cos (\phi)$, we can alternatively write the unit vector $\mathbf{\Omega}$ as
\begin{equation}
    \label{eq:S2 parameterization}
    \mathbf{\Omega} = \left(\xi, \eta, \mu \right) = \left( \sqrt{1 - \mu^2} \cos (\theta), \sqrt{1 - \mu^2} \sin (\theta), \mu \right),
\end{equation}
with $\mu \in [-1,1]$. Applying this change of variables to the surface integral gives
\begin{equation}
    \label{eq:angular integral change of variable}
    \frac{1}{4\pi} \int_{\mathbb{S}^{2}}  g (\mathbf{x}, \mathbf{\Omega}',t) \,d\mathbf{\Omega}' = \frac{1}{4\pi} \int_{0}^{2\pi} \int_{-1}^{1}  g (\mathbf{x}, \theta, \mu, t) \,d\mu \,d\theta.
\end{equation}

The last integral in equation \eqref{eq:angular integral change of variable} is approximated with a CL quadrature rule, which is constructed using a tensor product between a composite midpoint rule for the integral in $\theta$ and a Gauss-Legendre quadrature rule for the integral over $\mu$. Let $\{w_{\ell}, \mu_{\ell} \}$, with $\ell = 1, \cdots, N$ denote the Gauss-Legendre weights and nodes over the interval $[-1,1]$. Then the analogous weights and nodes for the CL rule are given by
\begin{equation*}
    w_{k,\ell} = \frac{\pi}{N} w_{\ell}, \quad \mathbf{\Omega}_{k,\ell} = \left( \sqrt{1 - \mu_{\ell}^{2}} \cos (\theta_{k}), \sqrt{1 - \mu_{\ell}^{2}} \sin (\theta_{k}), \mu_{\ell} \right), \quad \theta_{k} = \frac{(2k - 1)\pi}{2N},
\end{equation*}
where $k = 1, \cdots, 2N$ and $\ell = 1, \cdots, N.$
The surface integral \eqref{eq:angular integral change of variable} can then be calculated according to
\begin{equation*}
    \int_{0}^{2\pi} \int_{-1}^{1}  g (\mathbf{x}, \theta, \mu, t) \,d\mu \,d\theta \approx \frac{1}{4\pi} \sum_{k = 1}^{2N} \sum_{\ell = 1}^{N} w_{k,\ell} g_{k,\ell} (\mathbf{x}, t).
\end{equation*}
The order $N$ quadrature rule contains a total of $2N^{2}$ points and is known to integrate spherical polynomials up to degree $2N$ exactly. Alternative quadratures, such as the Lebedev rule \cite{Lebedev1976}, can provide a more uniform partitioning of the sphere, but are not a tensor product quadrature. When referring to the angular discretization, we use $S_{N}$ to indicate the discrete ordinates method obtained with a CL rule of order $N$.

\subsection{Spatial Discretization}
\label{subsec:spatial discretization}

The discretization of the spatial derivatives is performed using a fifth-order finite-difference classical WENO scheme \cite{shu2009high}. Other options for this component could be used instead, but the specifics are not the focus of the present work. We label the discrete operators on the spatial mesh as $\mathbf{D}_{x}^{\pm}$, $\mathbf{D}_{y}^{\pm}$, etc. with the sign of the superscript indicating the bias in the approximation to the derivative. For example, $\mathbf{D}_{x}^{-}$ and $\mathbf{D}_{x}^{+}$ represent, respectively, the left- and right-biased approximations to $\partial_{x}$. The result is a high-order, locally conservative discretization of the fluxes, in the sense that
\begin{equation*}
    \mathbf{D}_{x}^{\pm} f_{i} := \frac{f_{i+1/2}^{\pm} - f_{i-1/2}^{\pm}}{\Delta x} = \partial_{x} f \Big\rvert_{x = x_{i}} + \mathcal{O}\left( \Delta x^{5} \right),
\end{equation*}
in smooth regions. In the discussion that follows, we present the discretization in the setting of a two-dimensional spatial mesh, but note that the extension to three spatial dimensions is straightforward.

Applying an upwind discretization to the macroscopic equation \eqref{eq: IMEX rho stage} yields
\begin{align}
    \rho^{(\ell)} &= \rho^{n}  - \Delta t \sum_{j=1}^{\ell-1} \widetilde{a}_{\ell j} \left\langle \xi^{+} \mathbf{D}_{x}^{-} g^{(j)} + \xi^{-} \mathbf{D}_{x}^{+} g^{(j)} + \eta^{+} \mathbf{D}_{y}^{-} g^{(j)} + \eta^{-} \mathbf{D}_{y}^{+} g^{(j)} \right\rangle_{\mathbf{\Omega}} \label{eq:rho stage spatial discretization} \\
    &\qquad - \Delta t \sum_{j=1}^{\ell-1} \widetilde{a}_{\ell j} \sigma_{a} \rho^{(j)} + \Delta t \sum_{j=1}^{\ell-1} \widetilde{a}_{\ell j} Q^{(j)}, \nonumber
\end{align}
where we have defined, following \eqref{eq:S2 parameterization}, the wind directions
\begin{equation}
    \label{eq:wind directions}
    \xi^{+} = \max\left(\xi, 0\right), \quad \xi^{-} = \min\left(\xi, 0\right), \quad \eta^{+} = \max\left(\eta, 0\right), \quad \eta^{-} = \min\left(\eta, 0\right).
\end{equation}

The discretization of the microscopic equation \eqref{eq: IMEX g stage} requires a bit more care in the treatment of the fluxes to ensure stability. We apply an upwind discretization to the convection terms involving the microscopic function $g$ and an alternating discretization to the convection terms of the macroscopic function $\rho$, which gives
\begin{multline}
    g^{(\ell)} = g^{n} - \Delta t \sum_{j=1}^{\ell-1} \widetilde{a}_{\ell j} \sigma_{a} g^{(j)} - \Delta t \sum_{j=1}^{\ell} \frac{a_{\ell j}  \sigma_{s}}{\epsilon^{2}} g^{(j)} \\  - \Delta t \sum_{j=1}^{\ell-1} \frac{\widetilde{a}_{\ell j}}{\epsilon} \Big( I - \Pi \Big) \left( \xi^{+} \mathbf{D}_{x}^{-} g^{(j)} + \xi^{-} \mathbf{D}_{x}^{+} g^{(j)} + \eta^{+} \mathbf{D}_{y}^{-} g^{(j)} + \eta^{-} \mathbf{D}_{y}^{+} g^{(j)} \right) \\  
    -\Delta t \sum_{j=1}^{\ell} \frac{a_{\ell j}}{\epsilon^{2}} \Big( \xi^{-} \mathbf{D}_{x}^{-} \rho^{(j)} + \xi^{+} \mathbf{D}_{x}^{+} \rho^{(j)} + \eta^{-} \mathbf{D}_{y}^{-} \rho^{(j)} + \eta^{+} \mathbf{D}_{y}^{+} \rho^{(j)} \Big), \label{eq:g stage spatial discretization}
\end{multline}
where $\xi^{\pm}$ and $\eta^{\pm}$ are defined in equation \eqref{eq:wind directions}. The motivation behind the choice of alternating fluxes for the approximation of $\nabla_{x} \cdot \left( \mathbf{\Omega} \rho \right)$ in equation \eqref{eq:g stage spatial discretization} is that the limiting diffusion equation should use information from both wind directions when combined with equation \eqref{eq:rho stage spatial discretization}. Otherwise, the discretization for the diffusion equation will be unstable. 
% \begin{remark}
% The motivation behind the choice of alternating fluxes for the approximation of $\nabla_{x} \cdot \left( \mathbf{\Omega} \rho \right)$ in equation \eqref{eq:g stage spatial discretization} is that the limiting diffusion equation should use information from both wind directions when combined with equation \eqref{eq:rho stage spatial discretization}. Otherwise, the discretization for the diffusion equation will be unstable. 
% \end{remark}

\subsection{Representation of Tensors in the HTT Format}
\label{subsec:HTT}

One of the primary challenges associated with kinetic simulations is the immense storage cost associated with the underlying high-dimensional phase space. For example, in multigroup transport calculations, which account for particle interactions at different energy levels $\lambda$, the number of dimensions can be as high as seven ($\mathbf{x} \in \mathbb{R}^{3}$, $\mathbf{\Omega} \in \mathbb{S}^{2}$, $\lambda \in \mathbb{R}$) plus time. The use of standard storage containers, such as arrays, quickly become impractical for such problems. In this work, we instead, seek a representation for the components of the system \eqref{eq:macro}-\eqref{eq:micro} in the HTT format \cite{HackbuschKuhnHT2009,GrasedyckHTT2010}, in which functions are represented hierarchically as a linear combination of tensor products. The dimensions of the problem, e.g., $(x,y,\theta,\mu)$, are organized in a binary tree that can be partitioned into disjoint subsets until the leaf nodes of the tree contain only a singleton set. The HOSVD \cite{DeLathauwerHOSVD2000} is applied recursively to nodes of this tree and generates the basis at the leaf nodes as well as the transfer tensors at the non-leaf nodes, which describe the interactions between the different dimensions. 

An advantage of the HTT format is its flexibility in the arrangement of the dimension tree which provides opportunities to exploit low-rank structures in the evolution of the distribution function. A key difference with previous work \cite{EinkemmerDLR-AP,hu2022adaptiveBoltzmann} is that we allow for the possibility of low-rank structures between the individual dimensions of the spatial and angular domains. The methods we propose are naturally rank-adaptive and retain their efficiency through HOSVD truncation. To demonstrate, consider the representation of the function $g(x, y, \theta, \mu, t)$ in the HTT format. For convenience, we shall encode the dimensions of the function with the following integer labels $\left\{x, y, \theta, \mu \right\} \mapsto \left\{1, 2, 3, 4 \right\}$. Then, we define a binary tree $\mathcal{T}$ whose nodes represent subsets of the dimension labels. The root node of $\mathcal{T}$ includes all of the dimensions, and each non-leaf node contains two children. At the continuous level, a fully tensorized approximation of $g$ in the HTT format is given by
\begin{equation}
    g(x, y, \theta, \mu,t) \approx \sum_{\ell_{12} = 1}^{r_{12}} \sum_{\ell_{34} = 1}^{r_{34}} B_{\ell_{12}, \ell_{34}}^{(1,2,3,4)} (t) U_{\ell_{12}}^{(1,2)} (x, y, t) \otimes U_{\ell_{34}}^{(3,4)} (\theta, \mu, t), \label{eq:g HTT}
\end{equation}
where the bases $U$ are defined hierarchically as
\begin{align}
    U_{\ell_{12}}^{(1,2)}(x,y,t) &= \sum_{\ell_{1}=1}^{r_{1}} \sum_{\ell_{2}=1}^{r_{2}} B_{\ell_{1},\ell_{2},\ell_{12}}^{(1,2)}(t)  U_{\ell_{1}}^{(1)} (x, t) \otimes U_{\ell_{2}}^{(2)} (y, t), \label{eq:space HTT} \\ 
    U_{\ell_{34}}^{(3,4)}(\theta, \mu, t) &= \sum_{\ell_{3}=1}^{r_{3}} \sum_{\ell_{4}=1}^{r_{4}} B_{\ell_{3},\ell_{4},\ell_{34}}^{(3,4)} (t)  U_{\ell_{3}}^{(3)} (\theta, t)\otimes U_{\ell_{4}}^{(4)} (\mu, t). \label{eq:angular HTT}
\end{align}
Here $\mathbf{r} = \left\{ r_{1234}, r_{12}, r_{34}, r_{1}, r_{2}, r_{3}, r_{4} \right\}$ are the hierarchical ranks at each of the nodes in the dimension tree. Within a given step, the basis for each dimension ($U$'s at the leaf nodes) and the corresponding interaction coefficients ($B$'s at the non-leaf nodes) can be evolved following a SAT approach \cite{GuoVlasovFlowMap2022}. To address problems posed on general spatial domains, we adjust the organization of the dimension tree by forgoing the tensorization \eqref{eq:space HTT}. We refer to this as an ``unsplit" representation. \Cref{fig:HTT architectures} provides a simple illustration of the unsplit and split dimension trees considered in this work. We remark that the HTT format can be considered a generalization of other tensor formats, including the popular tensor train decomposition \cite{TToseledets2011}, which uses a maximally unbalanced dimension tree.

\begin{figure}[t]
    \centering
    \begin{subfloat}[Unsplit]{
    \begin{tikzpicture}[scale=1.1]
        % Start from the root node with all dimensions
        \node(1234)[dim_node] {$\left\{ x, y, \theta, \mu \right\}$}
            child{node(12)[dim_node] {$\left\{ x, y \right\}$}}
            child{node(34)[dim_node] {$\left\{ \theta, \mu \right\}$} 
                child{node(3)[dim_node] {$\left\{ \theta \right\}$}}
                child{node(4)[dim_node] {$\left\{ \mu \right\}$}}
            };
    \end{tikzpicture}
    }
    \end{subfloat}
    \begin{subfloat}[Split]{
    \begin{tikzpicture}[scale=1.1]
        % Start from the root node with all dimensions
        \node(1234)[dim_node] {$\left\{ x, y, \theta, \mu \right\}$}
            child{node(12)[dim_node] {$\left\{ x, y \right\}$} 
                child{node(1)[dim_node] {$\left\{ x \right\}$}}
                child{node(2)[dim_node] {$\left\{ y \right\}$}}   
            }
            child{node(34)[dim_node] {$\left\{ \theta, \mu \right\}$} 
                child{node(3)[dim_node] {$\left\{ \theta \right\}$}}
                child{node(4)[dim_node] {$\left\{ \mu \right\}$}}
            };
    \end{tikzpicture}
    }
    \end{subfloat}
    \caption{Two candidate dimension trees for a four-dimensional tensor in the HTT format. The unsplit approach (left) treats the spatial variables as a single leaf node, while the split approach (right) assumes a fully tensorized phase space in which the leaf nodes are singleton sets.}
    \label{fig:HTT architectures}
\end{figure}

In the semi-discrete setting, the function $g$ is sampled on a mesh resulting in a tensor with mode sizes $N_{\mu}$, where $\mu = 1, \cdots, 4$. The approximation for $g$ is given by
\begin{equation}
    \label{eq:g HTT discrete}
    \mathbf{g}(t) \approx \sum_{\ell_{12} = 1}^{r_{12}} \sum_{\ell_{34} = 1}^{r_{34}} \mathbf{B}_{\ell_{12}, \ell_{34}}^{(1,2,3,4)} (t) \mathbf{U}_{\ell_{12}}^{(1,2)} (t) \otimes \mathbf{U}_{\ell_{34}}^{(3,4)} (t).
\end{equation}
Using the split dimension tree, the matrices for the bases are further expressed as
\begin{align}
    \mathbf{U}_{\ell_{12}}^{(1,2)}(t) &= \sum_{\ell_{1}=1}^{r_{1}} \sum_{\ell_{2}=1}^{r_{2}} \mathbf{B}_{\ell_{1},\ell_{2},\ell_{12}}^{(1,2)}(t)  \mathbf{U}_{\ell_{1}}^{(1)} (t) \otimes \mathbf{U}_{\ell_{2}}^{(2)} (t), \label{eq:space HTT discrete} \\ 
    \mathbf{U}_{\ell_{34}}^{(3,4)}(t) &= \sum_{\ell_{3}=1}^{r_{3}} \sum_{\ell_{4}=1}^{r_{4}} \mathbf{B}_{\ell_{3},\ell_{4},\ell_{34}}^{(3,4)} (t)  \mathbf{U}_{\ell_{3}}^{(3)} (t)\otimes \mathbf{U}_{\ell_{4}}^{(4)} (t), \label{eq:angular HTT discrete}
\end{align}

Discrete derivatives, such as $\mathbf{D}_{x}^{\pm}$, can be applied to the discrete tensor $\mathbf{g}(t)$ by applying matrix-vector products to the column vectors of the corresponding leaf node basis in the tree. For example, to compute a left-biased derivative $\mathbf{D}_{x}^{-} \mathbf{g}(t) := \mathbf{D}_{1}^{-} \mathbf{g}(t)$, we can use the definition \eqref{eq:space HTT discrete} used in the representation \eqref{eq:g HTT discrete} to find that
\begin{equation*}
    \mathbf{D}_{1}^{-}  \mathbf{U}_{\ell_{12}}^{(1,2)}(t) = \sum_{\ell_{1}=1}^{r_{1}} \sum_{\ell_{2}=1}^{r_{2}} \mathbf{B}_{\ell_{1},\ell_{2},\ell_{12}}^{(1,2)}(t) \left( \mathbf{D}_{1}^{-} \mathbf{U}_{\ell_{1}}^{(1)} (t) \right) \otimes \mathbf{U}_{\ell_{2}}^{(2)} (t).
\end{equation*}
In the split dimension representation, WENO differentiation is applied in a matrix-free manner to each column of the basis associated with a given mode, using periodic extensions to enforce boundary conditions.

Other operations, e.g., angular integration, are can be efficiently performed by contracting a quadrature tensor with the discrete representation defined by \eqref{eq:g HTT} through its basis \eqref{eq:angular HTT discrete} along the appropriate set of dimensions. At the end of each RK stage, the HOSVD is applied to remove redundancies in the basis. Tensor truncation is a essential to the efficiency of the SAT approach \cite{GuoVlasovFlowMap2022}, so it is relevant to discuss its cost. If we let $r$ denote the maximum hierarchical rank and $N$ mode size in each dimension, then the cost of truncating tensors in the HTT format is $\mathcal{O} \left(dNr^{2} + dr^{4}\right)$ \cite{GrasedyckHTT2010,kressner2012htucker}. Furthermore, we remark that the storage cost of such tensors is $\mathcal{O} \left(dNr + (d-2)r^{3} + r^{2}\right)$, which avoids the $\mathcal{O}(N^{d})$ scaling from the CoD.

We now establish an AP property of the low-rank methods under the assumption of exact discretizations for the spatial and angular domains. First, we recall a known error bound \cite{GrasedyckHTT2010} (See Theorem 3.11) for the truncation of tensors in the HTT format:
\begin{theorem}
    \label{thm:HTT truncation}
    Let $\mathcal{A} \in \mathbb{R}^{n_{1} \times n_{2} \times \cdots \times n_{d}}$ be a $d$-dimensional tensor, and let $\widetilde{\mathcal{A}}$ denote the truncation of $\mathcal{A}$ to a HTT with dimension tree $\mathcal{T}$ with hierarchical ranks $\left\{ r_{t} : t\in \mathcal{T} \right\}$. Then the resulting approximation satisfies the following bound
    \begin{equation*}
        \left\lvert\left\lvert \mathcal{A} - \widetilde{\mathcal{A}} \right\rvert\right\rvert \leq \sqrt{\sum_{t\in \mathcal{T}}\sum_{i > r_{t}} \sigma_{t,i}^{2} },
    \end{equation*}
    where $\sigma_{t,i} = \sigma_{i}\left( \mathcal{A}^{(t)} \right)$ denotes the singular values of the mode $t$ matricization of $\mathcal{A}$ and $ \left\lvert\left\lvert \cdot \right\rvert\right\rvert $ denotes the Frobenius norm which is defined as
    $$ \left\lvert\left\lvert \mathcal{A} \right\rvert\right\rvert = \left( \sum_{i_{1} = 1}^{n_{1}}\cdots \sum_{i_{d} = 1}^{n_{d}} \mathcal{A}_{i_{1,}\cdots, i_{d}}^{2} \right)^{1/2}.$$
\end{theorem}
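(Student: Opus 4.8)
This is Theorem~3.11 of \cite{GrasedyckHTT2010}, and I would reproduce its proof along the following lines. Recall that $\widetilde{\mathcal{A}}$ is produced by the hierarchical SVD, which sweeps the dimension tree $\mathcal{T}$ from the leaves toward the root: at each node $t$ one forms the mode-$t$ matricization $\mathcal{A}^{(t)}$ (for an interior node, expressed in the bases of the already-truncated child subspaces, so that the retained left singular subspace $U_t$ automatically obeys the nestedness property $U_t \subseteq U_{t_1}\otimes U_{t_2}$ for its children $t_1,t_2$), computes an SVD, and keeps the $r_t$ dominant left singular vectors. Each such step is the application to the current tensor of an orthogonal projection $\pi_t$ on $\mathbb{R}^{n_1\times\cdots\times n_d}$ — equipped with the Frobenius inner product — which, read through the mode-$t$ matricization, projects the columns onto $U_t$; concretely $\pi_t = \Pi_{U_t}\otimes\mathrm{Id}$ with $\Pi_{U_t}$ acting on the factor $\bigotimes_{k\in t}\mathbb{R}^{n_k}$. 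Listing the nodes $t_1,\dots,t_p$ in leaves-to-root order, the output is $\widetilde{\mathcal{A}} = \pi_{t_p}\cdots\pi_{t_1}\mathcal{A}$, and one first verifies that this tensor indeed lies in the HTT format with ranks $\{r_t\}$.

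The next step is a per-node estimate. By the Eckart--Young--Mirsky theorem, the best rank-$r$ Frobenius approximation of a matrix $M$ has error $\big(\sum_{i>r}\sigma_i(M)^2\big)^{1/2}$. Inside the algorithm, $\pi_{t_k}$ is applied not to $\mathcal{A}$ but to the partially truncated tensor $\mathcal{A}_{k-1} := \pi_{t_{k-1}}\cdots\pi_{t_1}\mathcal{A}$; since the earlier projections amount to multiplying the mode-$t_k$ matricization of $\mathcal{A}$ by orthogonal projections (on the left for descendants of $t_k$, on the right for nodes in disjoint branches), this cannot increase its singular values, so $\sigma_i\big(\mathcal{A}_{k-1}^{(t_k)}\big)\le \sigma_i\big(\mathcal{A}^{(t_k)}\big)=\sigma_{t_k,i}$. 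Hence the error committed at node $t_k$ obeys $\|\mathcal{A}_{k-1}-\pi_{t_k}\mathcal{A}_{k-1}\| \le \big(\sum_{i>r_{t_k}}\sigma_{t_k,i}^2\big)^{1/2}$.

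It remains to accumulate these errors sharply. Telescoping gives $\mathcal{A}-\widetilde{\mathcal{A}} = \sum_{k=1}^p\big(\mathcal{A}_{k-1}-\pi_{t_k}\mathcal{A}_{k-1}\big)$. Using that any two nodes of a tree have index sets that are nested or disjoint, together with the nestedness of the retained subspaces (which makes $\pi_s$ act as the identity on $U_t$ whenever $s$ is a descendant of $t$), one shows that the projections $\{\pi_t\}_{t\in\mathcal{T}}$ pairwise commute. Consequently the $k$-th summand equals $\pi_{t_{k-1}}\cdots\pi_{t_1}(\mathrm{Id}-\pi_{t_k})\mathcal{A}$; it lies in $\mathrm{range}(\mathrm{Id}-\pi_{t_k})$ and simultaneously in $\mathrm{range}(\pi_{t_j})$ for every $j<k$, so the summands are pairwise orthogonal. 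The Pythagorean identity then yields $\|\mathcal{A}-\widetilde{\mathcal{A}}\|^2 = \sum_{k=1}^p \|\mathcal{A}_{k-1}-\pi_{t_k}\mathcal{A}_{k-1}\|^2 \le \sum_{k=1}^p\sum_{i>r_{t_k}}\sigma_{t_k,i}^2 = \sum_{t\in\mathcal{T}}\sum_{i>r_t}\sigma_{t,i}^2$, which is the stated bound.

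\textbf{Main obstacle.} The Eckart--Young estimate and the monotonicity of singular values under multiplication by a projection are routine; the subtle point is the orthogonality/commutation bookkeeping across the tree. Disjoint nodes are harmless because their projections act on independent groups of modes, but for a node and one of its ancestors one must carefully use the nestedness $U_t\subseteq U_{t_1}\otimes U_{t_2}$ of the retained subspaces to conclude that the corresponding projections commute. It is this structural compatibility, rather than any hard inequality, that upgrades the naive triangle-inequality bound (which would carry a spurious $\sqrt{|\mathcal{T}|}$ factor) to the sharp sum-of-squares appearing in the theorem.
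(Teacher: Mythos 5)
The paper does not prove this result at all: it is imported verbatim as Theorem~3.11 of Grasedyck's 2010 paper, so there is no internal proof to compare against. Your reconstruction is a correct and faithful account of the standard argument: hierarchical SVD as a leaves-to-root composition of orthogonal projections, Eckart--Young at each node, monotonicity of singular values under one-sided multiplication by orthogonal projections (valid here precisely because, in the leaves-to-root order, all previously applied projections sit at descendant or disjoint nodes of $t_k$), and a sharp quadratic accumulation of the per-node errors. The one place where you deviate slightly from the published proof is the accumulation step: Grasedyck obtains $\lVert \mathcal{A}-\pi_{t_p}\cdots\pi_{t_1}\mathcal{A}\rVert^2 \le \sum_k \lVert \mathcal{A}-\pi_{t_k}\mathcal{A}\rVert^2$ from a general lemma valid for \emph{arbitrary} orthogonal projections (the cross term vanishes because $\langle P u,(I-P)v\rangle=0$), with no appeal to commutativity, whereas you derive a Pythagorean identity from pairwise commutation of the $\pi_t$, which in turn rests on the nestedness $U_t\subseteq U_{t_1}\otimes U_{t_2}$ of the retained subspaces. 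Your route is correct in this specific setting and is, if anything, more structurally informative, but it is worth knowing that the sum-of-squares bound does not actually require the commutation bookkeeping you flag as the main obstacle.
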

In what follows it is helpful to write the error estimate in \cref{thm:HTT truncation} in the form
\begin{equation*}
    \left\lvert\left\lvert \mathcal{A} - \widetilde{\mathcal{A}} \right\rvert\right\rvert \lesssim \max_{t \in \mathcal{T}} \sigma_{t,r_{t}+1}.
\end{equation*}
$\lesssim$ denotes less than or equal with a constant, and here the constant depends on the tree $\mathcal{T}$ and tensor mode sizes $\{n_{1}, n_{2}, \cdots, n_{d}\}$. While this constant can be large, it will nonetheless remain finite.

Next, we establish a bound on the difference between the macroscopic flux and its diffusion limit. Here we assume that tensor truncation is applied only on the microscopic variable $g$.
\begin{lemma}
\label{lem:unsplit asymptotic error bound}
    Let $\boldsymbol{\rho}^{(\ell)}$ and $\mathbf{g}^{(\ell)}$ denote the solution tensors at a given stage $\ell = 1,2, \cdots s$ of the system \eqref{eq:macro}-\eqref{eq:micro} that is discretized using a GSA IMEX method. Also, let $\widetilde{\mathbf{g}}^{(\ell)}$ denote the low-rank decomposition of $\mathbf{g}^{(\ell)}$ in the HTT format with an unsplit dimension tree $\mathcal{T}$. Suppose, in addition, that no truncation is applied to the tensor $\boldsymbol{\rho}^{(\ell)}$ and that the initial data are well-prepared with $\nabla_{x}g$ remaining bounded. If the spatial and angular discretizations are exact, then the solution at each stage satisfies the inequality
    $$
        \left\lVert -\frac{1}{3\sigma_s} \nabla_x \boldsymbol{\rho}^{(\ell)} - \left\langle \mathbf{\Omega} \widetilde{\mathbf{g}}^{(\ell)} \right\rangle_{\mathbf{\Omega}} \right\rVert \lesssim \epsilon + (2 + \epsilon)\max_{\substack{1 \leq j < \ell \\ t \in \mathcal{T}}} \sigma_{t,r_{t}+1}^{(j)},
    $$
    where $\sigma_{t,r_{t}+1}^{(j)}$ denotes the leading singular value of the mode $t$ matricization of $\mathbf{g}$ at stage $j$ which is removed by truncation.
\end{lemma}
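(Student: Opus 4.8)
The plan is to reproduce the limiting argument in the proof of \cref{thm:MM AP property IMEX} while carrying the low-rank truncation errors along as explicit perturbations and controlling them with \cref{thm:HTT truncation}. Introduce $\mathbf{g}^{(\ell),*}$ for the tensor produced by the stage formula \eqref{eq:IMEX g stage explicit} when the already-truncated earlier stages $\widetilde{\mathbf{g}}^{(1)}, \dots, \widetilde{\mathbf{g}}^{(\ell-1)}$ and the macroscopic stages $\boldsymbol{\rho}^{(1)}, \dots, \boldsymbol{\rho}^{(\ell)}$ are supplied on the right-hand side, and set $\boldsymbol{\delta}^{(j)} := \widetilde{\mathbf{g}}^{(j)} - \mathbf{g}^{(j),*}$ for the error incurred when stage $j$ is compressed onto the tree $\mathcal{T}$. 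By \cref{thm:HTT truncation} and the reformulation displayed immediately after it, $\lVert \boldsymbol{\delta}^{(j)} \rVert \lesssim \max_{t \in \mathcal{T}} \sigma_{t,r_{t}+1}^{(j)}$, with a constant depending only on $\mathcal{T}$ and the mode sizes.

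First I would recast the stage update in the ``balance'' form used in \cref{thm:MM AP property IMEX}, i.e.\ isolate the terms $\nabla_{x}\cdot(\mathbf{\Omega}\boldsymbol{\rho}^{(j)}) + \sigma_{s}\mathbf{g}^{(j),*}$ on the left. Relative to the exact case the only change is that the off-diagonal couplings of the RK method now see $\widetilde{\mathbf{g}}^{(j)} = \mathbf{g}^{(j),*} + \boldsymbol{\delta}^{(j)}$ rather than the exact stage value, so that the right-hand side acquires, in addition to the $\mathcal{O}(\epsilon)$ terms inherited from the well-preparedness of $\mathbf{g}$ and the assumed boundedness of $\nabla_{x}g$: (a) an extra forcing $-\Delta t\,\sigma_{s}\sum_{j<\ell}a_{\ell j}\boldsymbol{\delta}^{(j)}$ from the $\sigma_{s}\widetilde{\mathbf{g}}^{(j)}$ couplings, and (b) an $\epsilon$-weighted term $-\epsilon\Delta t\sum_{j<\ell}\widetilde{a}_{\ell j}(I-\Pi)\big(\nabla_{x}\cdot(\mathbf{\Omega}\boldsymbol{\delta}^{(j)})\big)$ from the $(I-\Pi)$ flux coupling. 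Stacking the relations for $\ell = 1, \dots, s$ into a linear system $\Delta t\, A\, \mathbf{V}^{*} = \mathbf{R}$, with $\mathbf{v}^{(j),*} := \nabla_{x}\cdot(\mathbf{\Omega}\boldsymbol{\rho}^{(j)}) + \sigma_{s}\mathbf{g}^{(j),*}$, and inverting the invertible matrix $A$ exactly as in \cref{thm:MM AP property IMEX} (with $\omega_{\ell j}$ the entries of $A^{-1}$) gives
\begin{equation*}
    \mathbf{v}^{(\ell),*} = \frac{1}{\Delta t}\sum_{j} \omega_{\ell j}\, \mathbf{R}^{(j)} ,
\end{equation*}
whose norm is $\lesssim \epsilon + (1+\epsilon)\max_{j<\ell,\,t} \sigma_{t,r_{t}+1}^{(j)}$: the $\mathcal{O}(\epsilon)$ and $\mathcal{O}(\epsilon^{2})$ pieces of $\mathbf{R}^{(j)}$ produce the $\epsilon$ term (boundedness of $\nabla_{x}g$ is what makes the $(I-\Pi)$ coupling genuinely $\mathcal{O}(\epsilon)$), forcing (a) produces the $\mathcal{O}(1)$ factor on the truncation term, forcing (b) produces the $\epsilon$-weighted truncation piece, and the summation confines these errors to stages strictly preceding $\ell$.

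Dividing by $\sigma_{s}$ (which is bounded below on the bounded domain) turns this into $\mathbf{g}^{(\ell),*} = -\tfrac{1}{\sigma_{s}}\nabla_{x}\cdot(\mathbf{\Omega}\boldsymbol{\rho}^{(\ell)}) + \mathbf{E}^{(\ell)}$ with $\lVert \mathbf{E}^{(\ell)} \rVert \lesssim \epsilon + (1+\epsilon)\max_{j<\ell,\,t}\sigma_{t,r_{t}+1}^{(j)}$; adding the compression error $\boldsymbol{\delta}^{(\ell)}$ of stage $\ell$ itself (the source of the remaining unit in $2+\epsilon$) yields $\widetilde{\mathbf{g}}^{(\ell)} = -\tfrac{1}{\sigma_{s}}\nabla_{x}\cdot(\mathbf{\Omega}\boldsymbol{\rho}^{(\ell)}) + \widetilde{\mathbf{E}}^{(\ell)}$ with $\lVert \widetilde{\mathbf{E}}^{(\ell)} \rVert \lesssim \epsilon + (2+\epsilon)\max_{j<\ell,\,t}\sigma_{t,r_{t}+1}^{(j)}$. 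Finally I would apply $\langle \mathbf{\Omega}\,\cdot\,\rangle_{\mathbf{\Omega}}$: since $\mathbf{\Omega}$ is independent of $x$ and $\langle \mathbf{\Omega}\otimes\mathbf{\Omega}\rangle_{\mathbf{\Omega}} = I/3$, the leading term becomes $-\tfrac{1}{3\sigma_{s}}\nabla_{x}\boldsymbol{\rho}^{(\ell)}$ (as in \eqref{eq:limiting moment of g at stages}), while $\lVert \langle \mathbf{\Omega}\widetilde{\mathbf{E}}^{(\ell)}\rangle_{\mathbf{\Omega}}\rVert \lesssim \lVert \widetilde{\mathbf{E}}^{(\ell)}\rVert$ because the normalized quadrature average with $\lvert\mathbf{\Omega}\rvert = 1$ is a contraction; rearranging gives the asserted inequality.

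The main obstacle is the middle step: one must split the stage equation cleanly into a part that still obeys the exact RK algebra — so that applying $A^{-1}$ collapses it to $\boldsymbol{\rho}^{(\ell)}$, exactly as in \cref{thm:MM AP property IMEX} — and a perturbation generated solely by feeding compressed stages into the off-diagonal couplings, and then check that neither the inversion of $A$ nor the $(I-\Pi)$ and $\sigma_{a}$ terms amplify the truncation errors by more than an $\mathcal{O}(1)$, tree-dependent constant. This is precisely where the hypothesis that $\nabla_{x}g$ stays bounded and the finiteness of the constant in \cref{thm:HTT truncation} are needed; the remaining ingredients (passing to the limit $\epsilon\to 0$ in \eqref{eq:IMEX g stage explicit} and invoking $\langle \mathbf{\Omega}\otimes\mathbf{\Omega}\rangle_{\mathbf{\Omega}} = I/3$) are routine.
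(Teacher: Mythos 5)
Your argument is correct and reaches the stated bound with the same essential ingredients as the paper --- the HTT truncation estimate of \cref{thm:HTT truncation}, stage-by-stage propagation of the compression errors through the IMEX recursion, inversion of the implicit matrix $A$, and the identity $\left\langle \mathbf{\Omega}\otimes\mathbf{\Omega}\right\rangle_{\mathbf{\Omega}} = I/3$ --- but it is organized differently. The paper splits the error by the triangle inequality through the \emph{exact} full-grid stage values: the first term is handled by invoking \cref{thm:MM AP property IMEX} as a black box (giving the $\epsilon$ term), and the second term is a stability estimate for $\lVert \mathbf{g}^{(\ell)}-\widetilde{\mathbf{g}}^{(\ell)}\rVert$ that requires an auxiliary bound on $\lVert\boldsymbol{\rho}^{(\ell)}-\widetilde{\boldsymbol{\rho}}^{(\ell)}\rVert$, the difference between the densities driven by untruncated versus truncated microscopic stages. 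You instead rerun the limiting computation from the proof of \cref{thm:MM AP property IMEX} directly on the truncated scheme, carrying the per-stage compression errors $\boldsymbol{\delta}^{(j)}$ as explicit forcings before applying $A^{-1}$; this avoids introducing the intermediate density $\widetilde{\boldsymbol{\rho}}$ altogether and produces the limiting relation in terms of the scheme's own $\boldsymbol{\rho}^{(\ell)}$, which is arguably what \cref{thm:AP property with HTT} actually needs. The one delicate point is common to both arguments: the coupling $(I-\Pi)\bigl(\nabla_{x}\cdot(\mathbf{\Omega}\boldsymbol{\delta}^{(j)})\bigr)$ involves derivatives of the truncation error, which are controlled only through the boundedness of the discrete derivative operators (hence with mesh-dependent constants absorbed into $\lesssim$); you flag this explicitly as the main obstacle, whereas the paper disposes of it with the remark that the argument ``relies on the boundedness of angular projections and $\nabla_{x}g$,'' so neither treatment is more rigorous than the other on this point.
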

\begin{proof}
    Using the triangle inequality, we can write
    \begin{align*}
        \Bigg\lVert -\frac{1}{3\sigma_s} \nabla_x \boldsymbol{\rho}^{(\ell)} - \left\langle \mathbf{\Omega} \widetilde{\mathbf{g}}^{(\ell)} \right\rangle_{\mathbf{\Omega}} \Bigg\rVert &\leq \Bigg\lVert -\frac{1}{3\sigma_s} \nabla_x \boldsymbol{\rho}^{(\ell)} - \left\langle \mathbf{\Omega} \mathbf{g}^{(\ell)} \right\rangle_{\mathbf{\Omega}} \Bigg\rVert \\ &+ \Bigg\lVert \left\langle \mathbf{\Omega} \mathbf{g}^{(\ell)} \right\rangle_{\mathbf{\Omega}} - \left\langle \mathbf{\Omega}\widetilde{\mathbf{g}}^{(\ell)} \right\rangle_{\mathbf{\Omega}} \Bigg\rVert.
    \end{align*}
    The first term clearly satisfies
    \begin{equation*}
        \left\lVert -\frac{1}{3\sigma_s} \nabla_x \boldsymbol{\rho}^{(\ell)} - \left\langle \mathbf{\Omega} \mathbf{g}^{(\ell)} \right\rangle_{\mathbf{\Omega}} \right\rVert \lesssim \epsilon, \quad \ell = 1,2, \cdots, s,
    \end{equation*}
    since the full-grid scheme is AP in the sense of \cref{thm:MM AP property IMEX}.

    The treatment of the second term requires a bit more care. First, consider the stage update \eqref{eq: IMEX rho stage}. Let $\widetilde{\boldsymbol{\rho}}^{(\ell)}$ denote the density obtained from the update \eqref{eq: IMEX rho stage} with $\widetilde{\mathbf{g}}^{(\ell)}$ in place of $\mathbf{g}^{(\ell)}$. Then, for each $\ell = 1, 2, \cdots, s$, the inequality
    \begin{align}
        \Big\lVert \boldsymbol{\rho}^{(\ell)} - \widetilde{\boldsymbol{\rho}}^{(\ell)} \Big\rVert &\lesssim \Big\lVert \boldsymbol{\rho}^{n} - \widetilde{\boldsymbol{\rho}}^{n} \Big\rVert  + \sum_{j=1}^{\ell-1} \Big\lVert \mathbf{g}^{(j)} - \widetilde{\mathbf{g}}^{(j)} \Big\rVert + \sum_{j=1}^{\ell-1} \Big\lVert \boldsymbol{\rho}^{(j)} - \widetilde{\boldsymbol{\rho}}^{(j)} \Big\rVert, \label{eq:rho stage bound} \\
        &\lesssim \max_{\substack{1 \leq j < \ell \\ t \in \mathcal{T}}} \sigma_{t,r_{t}+1}^{(j)}, \nonumber
    \end{align}
    holds, where $\sigma_{t,r_{t}+1}^{(j)}$ denotes the leading singular value of the mode $t$ matricization of $\mathbf{g}$ at stage $j$ which is removed by truncation. This argument relies on the boundedness of angular projections and $\nabla_{x}g$.

    Next, we apply a similar argument to the stage update \eqref{eq:IMEX g stage explicit}, and with the aid of the inequality \eqref{eq:rho stage bound}, we obtain
    \begin{align*}
        \Big\lVert \left\langle \mathbf{\Omega} \mathbf{g}^{(\ell)} \right\rangle_{\mathbf{\Omega}} - \left\langle \mathbf{\Omega}\widetilde{\mathbf{g}}^{(\ell)} \right\rangle_{\mathbf{\Omega}} \Big\rVert &\lesssim \Big\lVert \mathbf{g}^{(\ell)} - \widetilde{\mathbf{g}}^{(\ell)} \Big\rVert, \\
        & \lesssim \epsilon^{2}  \Big\lVert \mathbf{g}^{n} - \widetilde{\mathbf{g}}^{n} \Big\rVert + (1 + \epsilon) \sum_{j=1}^{\ell-1} \Big\lVert \mathbf{g}^{(j)} - \widetilde{\mathbf{g}}^{(j)} \Big\rVert \\ &+ \epsilon^{2} \sum_{j=1}^{\ell-1}  \Big\lVert \mathbf{g}^{(j)} - \widetilde{\mathbf{g}}^{(j)} \Big\rVert + \sum_{j=1}^{\ell}\Big\lVert \boldsymbol{\rho}^{(j)} - \widetilde{\boldsymbol{\rho}}^{(j)} \Big\rVert, \\
        & \lesssim (2 + \epsilon) \max_{\substack{1 \leq j < \ell \\ t \in \mathcal{T}}} \sigma_{t,r_{t}+1}^{(j)}.
    \end{align*}
    Combining inequalities together yields the conclusion, which completes the proof.
\end{proof}

We are now prepared to state our main result.
\begin{theorem}
\label{thm:AP property with HTT}
Suppose that the initial data are well-prepared and that $\nabla_{x}g$ remains bounded. Assume that exact discretizations are used for the spatial and angular variables, and that time integration is performed using a GSA IMEX discretization. Further suppose that $g$ is represented in the HTT format using an unsplit dimension tree $\mathcal{T}$ and that no truncation is applied to the macroscopic density. Then, in the limit $\epsilon \rightarrow 0$, the scheme reduces to a consistent discretization of the linear diffusion equation \eqref{eq:diffusion limit}.
\end{theorem}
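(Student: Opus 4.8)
The plan is to reproduce the structure of the proof of \cref{thm:MM AP property IMEX}, but now tracking the truncation error through \cref{lem:unsplit asymptotic error bound}. First I would write the macroscopic stage update \eqref{eq: IMEX rho stage} for the low-rank iterates, that is, with $\widetilde{\mathbf{g}}^{(j)}$ and $\widetilde{\boldsymbol{\rho}}^{(j)}$ replacing $\mathbf{g}^{(j)}$ and $\boldsymbol{\rho}^{(j)}$ (no truncation is applied to the density, but the $\widetilde{\boldsymbol{\rho}}^{(j)}$ still differ from the full-grid $\boldsymbol{\rho}^{(j)}$ because they are built from $\widetilde{\mathbf{g}}$). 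Using \cref{lem:unsplit asymptotic error bound} together with the internal estimate $\lVert \boldsymbol{\rho}^{(j)} - \widetilde{\boldsymbol{\rho}}^{(j)} \rVert \lesssim \max_{t} \sigma_{t,r_{t}+1}$ from its proof, I can write $\langle \mathbf{\Omega} \widetilde{\mathbf{g}}^{(j)} \rangle_{\mathbf{\Omega}} = -\frac{1}{3\sigma_s} \nabla_x \widetilde{\boldsymbol{\rho}}^{(j)} + \mathbf{E}^{(j)}$ with $\lVert \mathbf{E}^{(j)} \rVert \lesssim \epsilon + (2+\epsilon)\max_{1\le k<j,\, t\in\mathcal{T}} \sigma_{t,r_{t}+1}^{(k)}$, the $\nabla_x$ acting on $\boldsymbol{\rho}^{(j)} - \widetilde{\boldsymbol{\rho}}^{(j)}$ being harmless on the fixed mesh. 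Substituting this into \eqref{eq: IMEX rho stage} yields, for each stage $\ell$,
\begin{equation*}
    \widetilde{\boldsymbol{\rho}}^{(\ell)} = \widetilde{\boldsymbol{\rho}}^{n} + \Delta t \sum_{j=1}^{\ell-1} \widetilde{a}_{\ell j} \left( \nabla_x \cdot \left( \frac{1}{3\sigma_s} \nabla_x \widetilde{\boldsymbol{\rho}}^{(j)} \right) - \sigma_a \widetilde{\boldsymbol{\rho}}^{(j)} + Q^{(j)} \right) - \Delta t \sum_{j=1}^{\ell-1} \widetilde{a}_{\ell j}\, \nabla_x \cdot \mathbf{E}^{(j)} .
\end{equation*}

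Next I would pass to the limit $\epsilon \to 0$. The $\epsilon$-proportional part of each $\mathbf{E}^{(j)}$ then disappears, and what remains is a residual bounded by a constant multiple of $\max_{k,t}\sigma_{t,r_{t}+1}^{(k)}$, i.e.\ of the HOSVD truncation threshold used in the scheme. Together with the GSA identity $\widetilde{\boldsymbol{\rho}}^{n+1} = \widetilde{\boldsymbol{\rho}}^{(s)}$, the stage updates become precisely the explicit Runge--Kutta scheme associated with the tableau $(\widetilde{A}, \widetilde{\mathbf{b}})$ applied to the linear diffusion equation \eqref{eq:diffusion limit}, perturbed by a term that is $\mathcal{O}(\text{tol})$ uniformly in $\epsilon$. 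Since the truncation threshold can be chosen independently of $\Delta t$ and $\Delta x$ and sent to zero, the limiting scheme is a consistent discretization of \eqref{eq:diffusion limit}, which is the assertion of the theorem.

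A complete write-up should also address two points. First, one must check that all the constants hidden in the $\lesssim$ estimates borrowed from \cref{lem:unsplit asymptotic error bound,thm:HTT truncation} are independent of $\epsilon$; this is exactly where well-preparedness of the initial data and the uniform-in-$\epsilon$ boundedness of $\nabla_x g$ (hence of the angular moments that survive truncation) enter. Second, one should note that the projection enforcing $\langle \widetilde{\mathbf{g}} \rangle_{\mathbf{\Omega}} = 0$ keeps the macro-micro splitting intact at the discrete level, so that the micro stage equation \eqref{eq:IMEX g stage explicit} reduces, as $\epsilon \to 0$, to the local equilibrium $\widetilde{\mathbf{g}}^{(\ell)} \to -\frac{1}{\sigma_s}\nabla_x\cdot(\mathbf{\Omega}\widetilde{\boldsymbol{\rho}}^{(\ell)})$ just as in the proof of \cref{thm:MM AP property IMEX}; this is what justifies the identity $\langle \mathbf{\Omega}\widetilde{\mathbf{g}}^{(\ell)}\rangle_{\mathbf{\Omega}} \to -\frac{1}{3\sigma_s}\nabla_x\widetilde{\boldsymbol{\rho}}^{(\ell)}$ up to truncation error that underlies \cref{lem:unsplit asymptotic error bound}.

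The main obstacle I expect is the control of the accumulation of truncation error over the internal RK stages and across time steps: each $\mathbf{E}^{(j)}$ depends on the truncation residuals of all earlier stages, and the bound on $\lVert \boldsymbol{\rho}^{(\ell)} - \widetilde{\boldsymbol{\rho}}^{(\ell)} \rVert$ itself feeds back into the stage-$\ell$ estimate, so a discrete Gr\"onwall argument is needed to conclude that the perturbation stays $\mathcal{O}(\text{tol})$ with a constant that does not degenerate as $\epsilon \to 0$. Most of this bookkeeping is already packaged inside \cref{lem:unsplit asymptotic error bound}, so the residual effort is organizational; it is nonetheless the step where the argument would fail if, for instance, the truncation tolerance were permitted to scale with $\epsilon$.
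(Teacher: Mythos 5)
There is a genuine gap. Your argument stops at the bound
\begin{equation*}
  \left\lVert -\tfrac{1}{3\sigma_s}\nabla_x\boldsymbol{\rho}^{(\ell)} - \left\langle \mathbf{\Omega}\,\widetilde{\mathbf{g}}^{(\ell)}\right\rangle_{\mathbf{\Omega}} \right\rVert \lesssim \epsilon + (2+\epsilon)\max_{j,t}\sigma_{t,r_t+1}^{(j)},
\end{equation*}
and then treats the second term as a fixed $\mathcal{O}(\mathrm{tol})$ perturbation that you dispose of by ``sending the truncation threshold to zero.'' That proves a weaker, conditional statement: consistency only if the tolerance is driven to zero. The theorem (and the remark that follows it in the paper) asserts the AP property for a \emph{fixed} tolerance, with no requirement that it be majorized by $\epsilon$. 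The missing idea is the one the paper uses to close the argument: by \eqref{eq:limiting moment of g at stages}, the leading-order stage solution in the limit $\epsilon\to 0$ is
$-\tfrac{1}{3\sigma_s}\nabla_x\boldsymbol{\rho}^{(\ell)}\otimes\mathbf{1}_{\theta}\otimes\mathbf{1}_{\mu}$, i.e.\ an \emph{exactly rank-one} tensor in the unsplit HTT format. Hence every matricization has only one nonzero singular value in the limit, so $\sigma_{t,r_t+1}^{(j)}\to 0$ as $\epsilon\to 0$ for any truncation rank $r_t\ge 1$ — which always holds — and the truncation contribution in the bound from \cref{lem:unsplit asymptotic error bound} vanishes automatically, independently of the tolerance. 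This is also precisely where the unsplit-tree hypothesis is used; your proof never invokes it, which is a sign the essential mechanism has been bypassed (the remark after the theorem notes that the split tree would require a smaller tolerance exactly because this rank-one argument does not carry over directly).

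Two smaller points. Your closing sentence — that the argument would fail if the tolerance ``were permitted to scale with $\epsilon$'' — has the logic inverted: a tolerance that shrinks with $\epsilon$ only helps; the nontrivial content of the theorem is that no such coupling is needed. And the appeal to the zero-density projection on $\widetilde{\mathbf{g}}$ is not part of the theorem's hypotheses or the paper's proof; the local-equilibrium identity at each stage follows from the GSA IMEX structure and well-preparedness alone, as in \cref{thm:MM AP property IMEX}.
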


\begin{proof}
    To achieve the AP property, we must show that as $\epsilon \rightarrow 0$, then
    $$
        \left\lVert -\frac{1}{3\sigma_s} \nabla_x \boldsymbol{\rho}^{(\ell)} - \left\langle \mathbf{\Omega} \widetilde{\mathbf{g}}^{(\ell)} \right\rangle_{\mathbf{\Omega}} \right\rVert \rightarrow 0, \quad \ell = 1, 2, \cdots, s,
    $$
    as well. A direct application of \cref{lem:unsplit asymptotic error bound} yields the bound
    $$
        \left\lVert -\frac{1}{3\sigma_s} \nabla_x \boldsymbol{\rho}^{(\ell)} - \left\langle \mathbf{\Omega} \widetilde{\mathbf{g}}^{(\ell)} \right\rangle_{\mathbf{\Omega}} \right\rVert \lesssim \epsilon + (2 + \epsilon)\max_{\substack{1 \leq j < \ell \\ t \in \mathcal{T}}} \sigma_{t,r_{t}+1}^{(j)},
    $$
    where $\widetilde{\mathbf{g}}^{(\ell)}$ is the HTT approximation of $g^{(\ell)}$ and $\sigma_{t,r_{t}+1}^{(j)}$ denotes the leading singular value of the mode $t$ matricization of $\mathbf{g}$ at stage $j$ which is discarded during truncation.

    From the estimate above, this convergence holds in the limit $\epsilon \rightarrow 0$ provided that the singular values satisfy
    \begin{equation*}
        \max_{\substack{1 \leq j < \ell \\ t \in \mathcal{T}}} \sigma_{t,r_{t}+1}^{(j)} \rightarrow 0.
    \end{equation*}

    From equation \eqref{eq:limiting moment of g at stages}, we see that the leading order solution at each stage reduces to
    $$
        \left\langle \boldsymbol{\Omega} \, \mathbf{g}^{(\ell)} \right\rangle_{\boldsymbol{\Omega}} = -\frac{1}{3\sigma_s} \nabla_x \boldsymbol{\rho}^{(\ell)} \otimes \mathbf{1}_{\theta} \otimes \mathbf{1}_{\mu}, \quad \ell = 1, 2, \dots, s,
    $$
    which corresponds to a tensor in the HTT format with $r_{t} = 1$ for $t \in \mathcal{T}$. Therefore, the scheme will be AP as long as $r_{t} \geq 1$ for $t \in \mathcal{T}$, which is always true.
\end{proof}

\begin{remark}
    In order to preserve the AP property, we note that it is not necessary to require that the truncation tolerance be selected so that it is majorized by $\epsilon$, i.e.,
    \begin{equation*}
        \max_{\substack{1 \leq j < \ell \\ t \in \mathcal{T}}} \sigma_{t,r_{t}+1}^{(j)} \lesssim \epsilon.
    \end{equation*}
    This condition is considerably stronger than required and may be overly conservative in practice. In fact, \cref{thm:AP property with HTT} demonstrates that the AP property is achieved even with a significantly larger truncation tolerance.
\end{remark}
\begin{remark}
    \cref{thm:AP property with HTT} assumes that the unsplit dimension tree is used in the representation of the microscopic component. A similar AP property can be established for the split dimension tree by simply using a smaller truncation tolerance, so that nodes in the dimension tree corresponding to space essentially remain full-rank. However, it may be possible to make a more refined argument based on the decay properties of the solution's spectrum when $\epsilon \rightarrow 0$.
\end{remark}

\subsection{Projection Techniques to Enforce Physical Constraints}

This section outlines two projection techniques that can be used to enforce physical constraints for the macro-micro system. The first projection can be used with the split low-rank approach to preserve the total mass of the system in problems that do not contain sources or absorption effects. The second projection technique is compatible with any of the dimension trees and ensures that the function $g$ has zero density.

\subsubsection{Conservation of Total Mass}
\label{subsubsec:mass projection}

Given the density $\rho$ at two discrete time levels, e.g., $t^{n}$ and $t^{n+1}$, it is desirable to have
\begin{equation}
    \label{eq:mass conservation property}
    \int_{D} \rho^{n+1}(\mathbf{x}) \, d\mathbf{x} = \int_{D} \rho^{0}(\mathbf{x}) \, d\mathbf{x},
\end{equation}
where $D$ represents the spatial domain in the problem. In the case of the split dimension tree, it is convenient to represent the density in the HTT format as well. To avoid growth in the storage requirements, however, it is necessary to apply HOSVD truncation to this tensor, which destroys the conservation property of the scheme. 

In problems that neglect sources or absorption effects, the total mass is conserved, and we can use a simple technique to satisfy the condition \eqref{eq:mass conservation property}. The total mass of the system at time $t^{n}$, on domain $D$, is
\begin{equation}
    \label{eq:total mass at t^n}
    M^{n} = \int_{D} \rho^{n}(\mathbf{x}) \, d\mathbf{x}.
\end{equation}
Then, we define the adjusted density
\begin{equation}
    \label{eq:adjusted density}
    \widetilde{\rho}^{n+1}(\mathbf{x}) = \rho^{n+1}(\mathbf{x}) + \left(M^{0} - M^{n+1} \right) \delta \rho^{n+1} (\mathbf{x}),
\end{equation}
where $\rho^{n+1}$ is a truncated density with total mass $M^{n+1}$, $M^{0}$ is the target mass, and the correction $\delta \rho^{n+1} (\mathbf{x})$ satisfies
\begin{equation}
    \label{eq:total mass correction constraint}
    \int_{D} \delta \rho^{n+1} (\mathbf{x}) \, d\mathbf{x} = 1.
\end{equation}
In this work, we select the constant function $\delta \rho^{n+1} (\mathbf{x}) = 1$ and normalize the result by the volume of the computational domain so that it satisfies property \eqref{eq:total mass correction constraint}. This has the effect of shifting the mass, identically, at each of the grid points by a small amount. At the end of the time step, we redefine the density as $\rho^{n+1} = \widetilde{\rho}^{n+1}$, which satisfies the property \eqref{eq:mass conservation property}. This fact can be confirmed through a simple integration of equation \eqref{eq:adjusted density} on the domain $D$, making use of property \eqref{eq:total mass correction constraint}.

% Integration of equation \eqref{eq:adjusted density} on the domain $D$, using equation \eqref{eq:total mass correction constraint}, confirms the satisfaction of property \eqref{eq:mass conservation property}, as desired.

\subsubsection{The Zero Density Constraint for the Microscopic Component}
\label{subsubsec:projection for g}

Under the macro-micro decomposition \eqref{eq:decomposition}, it is assumed that the microscopic component $g$ satisfies the zero density condition $\langle g \rangle_{\mathbf{\Omega}} = 0$. However, if we apply HOSVD truncation to the microscopic component $g$, this property is no longer guaranteed. To address this, we apply an orthogonal decomposition of the tensor $g$ to project away numerical artifacts introduced by the time advance of the solution and subsequent rank truncation. The approach we take is similar in spirit to the LoMaC truncation technique of Guo and Qiu \cite{GuoVlasovDLRVlasovDynamics,GuoVlasovLoMacDG2023}. 

To proceed, we first apply non-conservative HOSVD truncation with tolerance $\epsilon$ to the microscopic component $g$. We shall denote this quantity as $\mathcal{T}_{\epsilon}(g)$. Then we compute the macroscopic density associated with the truncated $\mathcal{T}_{\epsilon}(g)$ as
\begin{equation}
    \label{eq:rho_g projection step}
    \rho_{g} = \left\langle \mathcal{T}_{\epsilon}(g) \right\rangle_{\mathbf{\Omega}}.
\end{equation}
Next, we define an orthogonal projection relative to the subspace
\begin{equation*}
    \mathcal{N} = \text{span} \left\{ \mathbf{1}_{\theta} \otimes \mathbf{1}_{\mu} \right\},
\end{equation*}
then define a projection of $\mathcal{T}_{\epsilon}(g)$ onto this subspace as
\begin{equation}
    \label{eq:g_N}
    g_{\mathcal{N}} := \frac{\rho_{g} \otimes \left( \mathbf{1}_{\theta} \otimes \mathbf{1}_{\mu} \right)}{ \left\langle \mathbf{1}_{\theta} \otimes \mathbf{1}_{\mu} \right\rangle_{\mathbf{\Omega}} } = \rho_{g} \otimes \left( \mathbf{1}_{\theta} \otimes \mathbf{1}_{\mu} \right).
\end{equation}
A projection onto the orthogonal complement $\mathcal{N}^{\perp}$ is given by
\begin{equation}
    \label{eq:g_N perp}
    g_{\mathcal{N}^{\perp}} = \mathcal{T}_{\epsilon}(g) - g_{\mathcal{N}}.
\end{equation}
Applying the operator $\left\langle \cdot \right\rangle_{\mathbf{\Omega}}$ to equation \eqref{eq:g_N perp} and making use of the definition \eqref{eq:rho_g projection step}, we obtain $\langle g_{\mathcal{N}^{\perp}} \rangle_{\mathbf{\Omega}} = 0$. Therefore, to enforce the angular moment constraint, we set $g = g_{\mathcal{N}^{\perp}}$. While this projection can be applied following each stage of the RK method, we find that it is sufficient, in practice, to apply the projection only at the end of each time step. Additionally, to retain the efficiency of the low-rank methods, we perform a truncation step following the definition \eqref{eq:g_N perp}.

\subsection{Products Involving the Cross-sections}
\label{subsec:cross-sections and hadamard}

An important physical process of radiation is its interaction with background media, which is built-in to terms such as $\sigma_{a} \rho$, $\sigma_{a} g$, and $\sigma_{s} g$. When the domain is discretized and the data is represented using standard arrays for containers, the evaluation of these terms is achieved using point-wise multiplication (in space) via the Hadamard product. If $\rho$ or $g$ are represented in the HTT format discussed in the previous section, then the details are a bit more delicate. In such circumstances, we require a corresponding low-rank representation for the cross-sections to calculate this product. Consider the product $\sigma g$, where $\sigma = \sigma(\mathbf{x})$ could describe scattering or absorption. In a more general application, the cross-section $\sigma$ depends on the properties of the background material, such as the density and temperature. For the sake of this discussion, we shall assume it is only a function of space and is time-independent. Rather than make additional simplifying assumptions about the structure of the cross-section, we provide some insight into the complexity associated with the evaluation of products between the cross-section and a tensor expressed in the HTT format. These complications are worth mentioning as they are connected to the overall efficiency of the method. 

First, we consider the unsplit dimension tree, in which the calculation of these products is relatively straightforward. In this setting, the material cross-section can be interpreted as a rank-1 tensor $\boldsymbol{\sigma} (\mathbf{x}) \otimes \mathbf{1}_{\theta} \otimes \mathbf{1}_{\mu}$. If we assume that the matrix $\mathbf{U}^{(1,2)}$, the spatial basis for $\mathbf{g}$ at the leaf node $\{1,2\}$, has rank $r_{12}$, then the evaluation of the Hadamard product can be performed, in a direct way, using a total of $r_{12}$ point-wise multiplications between the cross-section $\boldsymbol{\sigma} (\mathbf{x})$ and the columns of the matrix $\mathbf{U}^{(1,2)}$.

The calculation for the split dimension tree is a bit more involved. Suppose that $g$ has a decomposition of the form \eqref{eq:g HTT discrete} with the basis defined by \eqref{eq:space HTT discrete} and \eqref{eq:angular HTT discrete}. The \texttt{htucker} library \cite{kressner2012htucker} provides two methods to compute a Hadamard product between two tensors in the HTT format. The first approach directly evaluates the product using the operator \texttt{.*}, while the second method \texttt{elem\textunderscore mult}, which is the one used in this work, computes an approximate Hadamard product. However, there are several behavioral differences between them, and the details can be found in the reference \cite{kressner2012htucker}. The former suffers from severe rank growth, while the latter forms a truncated product that greatly reduces the storage cost of the evaluation. Note that in order to use these methods, we require a similar tensorization of the cross-section. A two-dimensional HTT for the cross-section can be obtained from the full tensor using the \texttt{truncate} method, which can then be extended to a four-dimensional tensor as
\begin{align*}
    \boldsymbol{\sigma} &\approx \sum_{\ell_{12} = 1}^{k_{12}} \mathbf{B}_{\ell_{12}}^{(1,2,3,4)} \mathbf{U}_{\ell_{12}}^{(1,2)} \otimes \mathbf{1}_{\boldsymbol{\theta}} \otimes \mathbf{1}_{\boldsymbol{\mu}}, \\
    \mathbf{U}_{\ell_{12}}^{(1,2)} &= \sum_{\ell_{1}=1}^{k_{1}} \sum_{\ell_{2}=1}^{k_{2}} \mathbf{B}_{\ell_{1},\ell_{2},\ell_{12}}^{(1,2)}  \mathbf{U}_{\ell_{1}}^{(1)} \otimes \mathbf{U}_{\ell_{2}}^{(2)},
\end{align*}
where the basis ($U$'s) and transfer tensors ($B$'s) are different from \eqref{eq:g HTT discrete}. Once the function \texttt{elem\textunderscore mult} completes, an additional truncation step is necessary to mitigate additional rank growth incurred in the evaluation.

%
% Define the test problems and show the results
%
\section{Numerical Results}
\label{sec:results}

The implementation of the proposed low-rank methods was performed using the \texttt{htucker} library \cite{kressner2012htucker}. We consider the unsplit and split dimension trees provided in \cref{fig:HTT architectures} for the representation of the microscopic variable $g$. Unless otherwise specified, we use a third-order ARS-GSA IMEX time discretization \cite{BoscarinoAP-IMEX2013} and a fifth-order finite-difference WENO spatial discretization \cite{shu2009high}. The maximum rank of any tensor in the HTT format is set to $512$, which imposes an upper limit of $512^{3}$ on the sizes of the transfer tensors at each of the non-leaf nodes of the tensor tree. However, it should be noted that this limit can be adjusted based on the memory specifications of the system. Furthermore, in both the unsplit and split tensor representations, we use a relative truncation tolerance of $10^{-4}$ for the function $g$. For the split approach, in which the function $\rho$ is also stored in the HTT format, we set the relative truncation tolerance to $10^{-16}$. 

To compare the efficiency of the methods, we compare the storage requirements used in both the low-rank and full-grid representations of the function $g$ by tracking the total number of degrees-of-freedom (DOF) required by each method. The total DOF used in the full-grid method is $N_{x} N_{y} N_{\theta} N_{\mu}$, which remains fixed over time. In contrast, the total DOF for the low-rank method will vary between time steps due to rank adaptivity, with the total DOF at any given time corresponding to a sum of the total number of entries taken across the nodes of the tensor tree. We use the function \texttt{ndofs} in the \texttt{htucker} library to provide this information. In order to characterize the compression offered by the low-rank methods, we calculate a time-dependent compression ratio for the high-dimensional function $g$ defined as
\begin{equation}
    \label{eq:compression ratio}
    \text{Compression Ratio} = (\text{Low-rank total DOF at time $t$})/(\text{Full-grid total DOF})
\end{equation}

We first show convergence of the low-rank methods using a manufactured solution before considering some benchmark problems found in the literature. For problems without an analytical solution, we compare the results obtained with the proposed low-rank methods against an analogous full-grid implementation. The full-grid solutions used in this section are obtained with the same third-order IMEX time discretization and fifth-order WENO scheme. The efficiency of the proposed solvers is discussed, and we highlight the particular challenges in each of the examples.

\subsection{Refinement with a Manufactured Solution}
\label{subsec:manufactured test}

We first verify the order of accuracy of the method using a rank-2 manufactured solution of the form
% \begin{multline}
%     \label{eq:manufactured solution}
%     f(x,y,\theta,\mu,t) = 2 + e^{-t} \sin \left( 2\pi x\right) \sin \left( 2\pi y\right) \\ + \epsilon e^{-t} \sin \left( 2\pi x\right) \sin \left( 2\pi y\right) \sin \left(\theta\right) \sqrt{1-\mu^{2}}.
% \end{multline}
\begin{equation*}
    f(x,y,\theta,\mu,t) = 2 + e^{-t} \sin \left( 2\pi x\right) \sin \left( 2\pi y\right) + \epsilon e^{-t} \sin \left( 2\pi x\right) \sin \left( 2\pi y\right) \sin \left(\theta\right) \sqrt{1-\mu^{2}}.
\end{equation*}
Under the macro-micro decomposition, we identify
\begin{align}
    \rho(x,y,t) &= 2 + e^{-t} \sin \left( 2\pi x\right) \sin \left( 2\pi y\right), \label{eq:manufactured solution rho} \\
    g(x,y,\theta, \mu, t) &= e^{-t} \sin \left( 2\pi x\right) \sin \left( 2\pi y\right) \sin \left(\theta\right) \sqrt{1-\mu^{2}}. \label{eq:manufactured solution g}
\end{align}
The manufactured solution defines a source $Q(\mathbf{x}, \mathbf{\Omega},t)$ for the kinetic equation \eqref{eq:linear kinetic equation}, which can be obtained from a direct calculation. This test uses periodic boundary conditions on the domain $[0, 1]^{2}$ and an $S_{8}$ angular discretization in each case. For the material parameters, we used $\sigma_{s}(\mathbf{x}) = 1$ and $\sigma_{a}(\mathbf{x}) = 0$. We also investigate the behavior in different asymptotic regimes, namely the kinetic, intermediate, and diffusive regimes. We ran the simulations to a final time of $T = 0.1$ and set the time step as $\Delta t = 0.1 \epsilon \Delta x + 0.1 \Delta x^{2}.$
% \begin{equation*}
%     \Delta t = 0.1 \epsilon \Delta x + 0.1 \Delta x^{2}.
% \end{equation*}
No projections were used in this example.

\Cref{tab:manufactured test IMEX-3 unsplit} and \cref{tab:manufactured test IMEX-3 split} present the results of the refinement study using the unsplit and split representations, respectively. In many cases, we find that the unsplit scheme produces a more accurate solution in the kinetic and intermediate regimes than the fully split dimension tree. A likely explanation for this is that the unsplit approach does not need to truncate the density $\rho$, which can degrade the quality of the solution. However, in the kinetic regime, the scheme is quite accurate and refines roughly at the convergence rate of the spatial discretization. As $\epsilon$ decreases, the error for a given mesh increases and tends to a first-order discretization of the linear diffusion equation, but we expect second-order accuracy due to the way in which we select the time step. In particular, as $\epsilon \rightarrow 0$, the time step becomes $\Delta t \sim \mathcal{O}(\Delta x^{2})$. Further, as the grid is refined, we see that the hierarchical rank of the microscopic component $g$ tends to 1 for both the unsplit and split dimension trees, which agrees with the rank of the analytical solution for $g$. For coarser meshes, there is an increase in the rank to account for the larger time step sizes.

% IMEX-3 (x and y as a unsplit)
\begin{table}[!ht]
    \footnotesize
    \caption{Refinement in the density $\rho$ against the manufactured solution \eqref{eq:manufactured solution rho} for the smooth test problem using an unsplit dimension tree for $g$. The rank of $g$ at the final time is included as well.}
    \label{tab:manufactured test IMEX-3 unsplit}
    \begin{center}
    \begin{tabular}{| c || c || c || c || c |}
        \hline
        $\epsilon$ & $N$ & $L^{1}$ Error & $L^{1}$ Order & $\text{Rank}(g)$ \\
        \hline
                           & $16$  & $9.4977 \times 10^{-5}$  & -        & $[1,2,2,2,1]$ \\
         $1$               & $32$  & $9.8467 \times 10^{-7}$  & $6.5918$ & $[1,1,1,1,1]$ \\
                           & $64$  & $9.6960 \times 10^{-9}$  & $6.6661$ & $[1,1,1,1,1]$ \\
                           & $128$ & $8.0977 \times 10^{-11}$ & $6.9037$ & $[1,1,1,1,1]$ \\
        % \hline
        %                    & $16$  & $1.7355 \times 10^{-4}$  & -        & $[1,12,12,11,2]$ \\
        %  $10^{-1}$         & $32$  & $1.1813 \times 10^{-6}$  & $7.1989$ & $[1,1,1,1,1]$ \\
        %                    & $64$  & $1.5725 \times 10^{-8}$  & $6.2311$ & $[1,1,1,1,1]$ \\
        %                    & $128$ & $3.0211 \times 10^{-10}$ & $5.7019$ & $[1,1,1,1,1]$ \\
        \hline
                           & $16$  & $7.3371 \times 10^{-5}$ & -        & $[1,8,8,8,2]$ \\
         $10^{-2}$         & $32$  & $2.8663 \times 10^{-6}$ & $4.6780$ & $[1,1,1,1,1]$ \\
                           & $64$  & $7.7766 \times 10^{-8}$ & $5.2039$ & $[1,1,1,1,1]$ \\
                           & $128$ & $4.0684 \times 10^{-9}$ & $4.2566$ & $[1,1,1,1,1]$ \\
        \hline
                           & $16$  & $9.4011 \times 10^{-5}$ & -        & $[1,4,4,4,1]$ \\
         $10^{-6}$         & $32$  & $9.2547 \times 10^{-7}$ & $6.6665$ & $[1,2,2,2,1]$ \\
                           & $64$  & $5.9271 \times 10^{-8}$ & $3.9648$ & $[1,1,1,1,1]$ \\
                           & $128$ & $4.7785 \times 10^{-8}$ & $0.3108$ & $[1,1,1,1,1]$ \\
        \hline
    \end{tabular}
    \end{center}
\end{table}

\begin{table}[!ht]
    \footnotesize
    \caption{Refinement in the density $\rho$ against the manufactured solution \eqref{eq:manufactured solution rho} for the smooth test problem using a split dimension tree for both $\rho$ and $g$. The rank of $g$ at the final time is included as well.}
    \label{tab:manufactured test IMEX-3 split}
    \begin{center}
    \begin{tabular}{| c || c || c || c || c |}
        \hline
        $\epsilon$ & $N$ & $L^{1}$ Error & $L^{1}$ Order & $\text{Rank}(g)$ \\
        \hline
                           & $16$  & $8.2941 \times 10^{-5}$  & -        & $[1,2,2,2,2,2,1]$ \\
         $1$               & $32$  & $2.3068 \times 10^{-6}$  & $5.1681$ & $[1,1,1,1,1,1,1]$ \\
                           & $64$  & $4.5972 \times 10^{-8}$  & $5.6490$ & $[1,1,1,1,1,1,1]$ \\
                           & $128$ & $9.7063 \times 10^{-10}$ & $5.5657$ & $[1,1,1,1,1,1,1]$ \\
        % \hline
        %                    & $16$  & $1.7069 \times 10^{-4}$ & $3.7399$ & $[1,12,12,6,5,11,3]$ \\
        %  $10^{-1}$         & $32$  & $7.7293 \times 10^{-6}$ & $4.4649$ & $[1,1,1,1,1,1,1]$ \\
        %                    & $64$  & $1.7241 \times 10^{-7}$ & $5.4865$ & $[1,1,1,1,1,1,1]$ \\
        %                    & $128$ & $5.3825 \times 10^{-9}$ & $5.0014$ & $[1,1,1,1,1,1,1]$ \\
        \hline
                           & $16$  & $7.6584 \times 10^{-5}$ & -        & $[1,8,8,6,6,8,2]$ \\
         $10^{-2}$         & $32$  & $1.6504 \times 10^{-6}$ & $5.5362$ & $[1,4,4,3,2,4,1]$ \\
                           & $64$  & $5.7782 \times 10^{-8}$ & $4.8361$ & $[1,1,1,1,1,1,1]$ \\
                           & $128$ & $3.4576 \times 10^{-9}$ & $4.0628$ & $[1,1,1,1,1,1,1]$ \\
        \hline
                           & $16$  & $9.7004 \times 10^{-5}$ & -        & $[1,4,4,6,5,4,1]$ \\
         $10^{-6}$         & $32$  & $7.1908 \times 10^{-7}$ & $7.0757$ & $[1,4,4,3,2,4,1]$ \\
                           & $64$  & $2.2573 \times 10^{-7}$ & $1.6716$ & $[1,1,1,1,1,2,1]$ \\
                           & $128$ & $4.7789 \times 10^{-8}$ & $2.2399$ & $[1,1,1,1,1,1,1]$ \\
        \hline
    \end{tabular}
    \end{center}
\end{table}

\Cref{fig:manufactured test IMEX-3 timing comparison} presents the results of a scaling study for the proposed low-rank methods. This test problem is particularly well-suited for such a study, as the rank remains small and nearly fixed throughout the simulation. This allows for isolating the scaling behavior of the algorithm with respect to mesh resolution, which is expected to have the most significant impact on computational time. The setup is a slight variation of the refinement experiment mentioned earlier, and we fix $\epsilon = 1$. We let $N$ denote the number of spatial mesh points per dimension, and progressively double $N$ from 32 to 4096 in the experiment. We also refine the mesh in the angular domain by pairing each case with an $S_{N/8}$ angular discretization. For each configuration, we apply the method for 10 steps and calculate the average wall time. As discussed in \cref{subsec:HTT}, if $r \ll N$, we expect the computational cost of the unsplit and split approaches to scale as $\mathcal{O}(N^{2})$ and $\mathcal{O}(N)$, respectively. Our timing results for the unsplit case align well with this prediction. However, the split case demonstrates constant complexity, which is faster than anticipated, even for highly refined meshes. This behavior seems to be an artifact of the MATLAB implementation, where interpreter overhead (independent of $N$) dominates the $\mathcal{O}(N)$ components. These components are highly vectorized and contribute only a small fraction to the overall simulation time.

\begin{figure}[!hbt]
    \centering
    \includegraphics[clip, trim={0cm, 0cm, 0cm, 0cm}, scale=0.175]{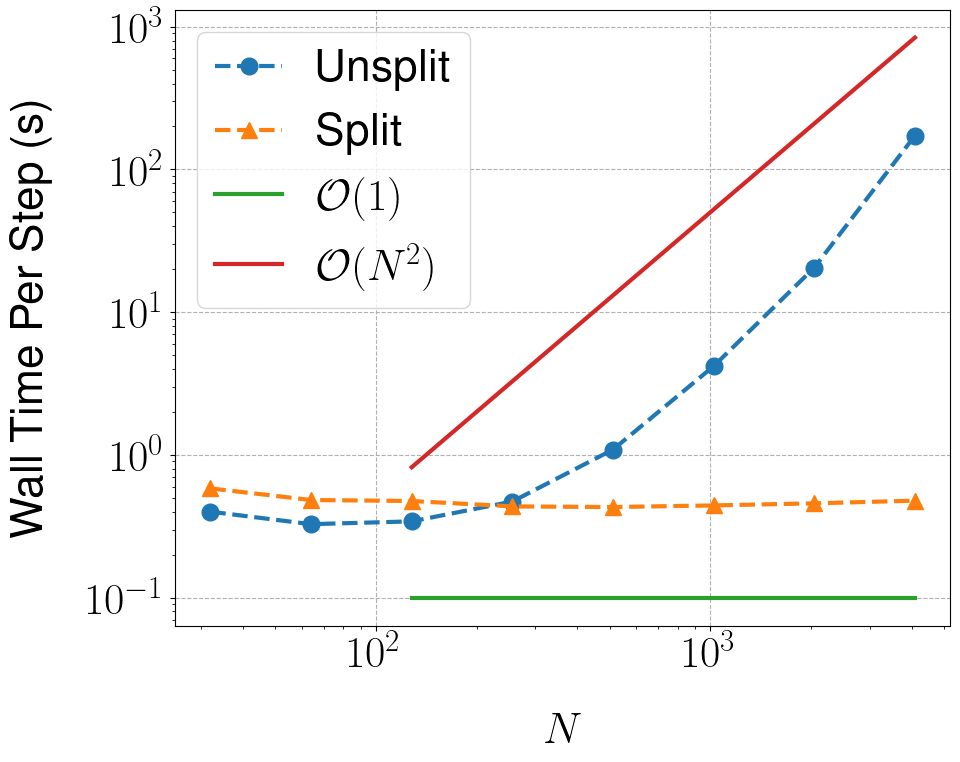}
    \caption{Wall time per timestep versus spatial mesh resolution for the unsplit and split methods.}
    \label{fig:manufactured test IMEX-3 timing comparison}
\end{figure}

%----------------------------------------------------------------------------------------------------------------

\subsection{Variable Scattering Cross-section}
\label{subsec:variable scattering}

For the second test case, we consider a strongly varying scattering cross-section, following the setup given in \cite{EinkemmerDLR-AP}. The goal with this experiment is to show the effect of the variable material data on the rank of the tensors. In this example, $\epsilon = 10^{-2}$, and the scattering cross-section is given by
\begin{equation}
    \label{eq:variable scattering coeff}
    \sigma_{s}(x,y) =
    \begin{cases}
        0.999 c^{4} \left(c + \sqrt{2}\right)^{2} \left(c - \sqrt{2}\right)^{2} + 0.001, \quad c = \sqrt{x^{2} + y^{2}} < 1, \\
        1, \quad \text{otherwise}.
    \end{cases}
\end{equation}

\begin{figure}
    \centering
    \subfloat[$\sigma_{s}(\mathbf{x})$]{\includegraphics[clip, trim={0.0cm, 0cm, 0cm, 0cm}, scale=0.22]{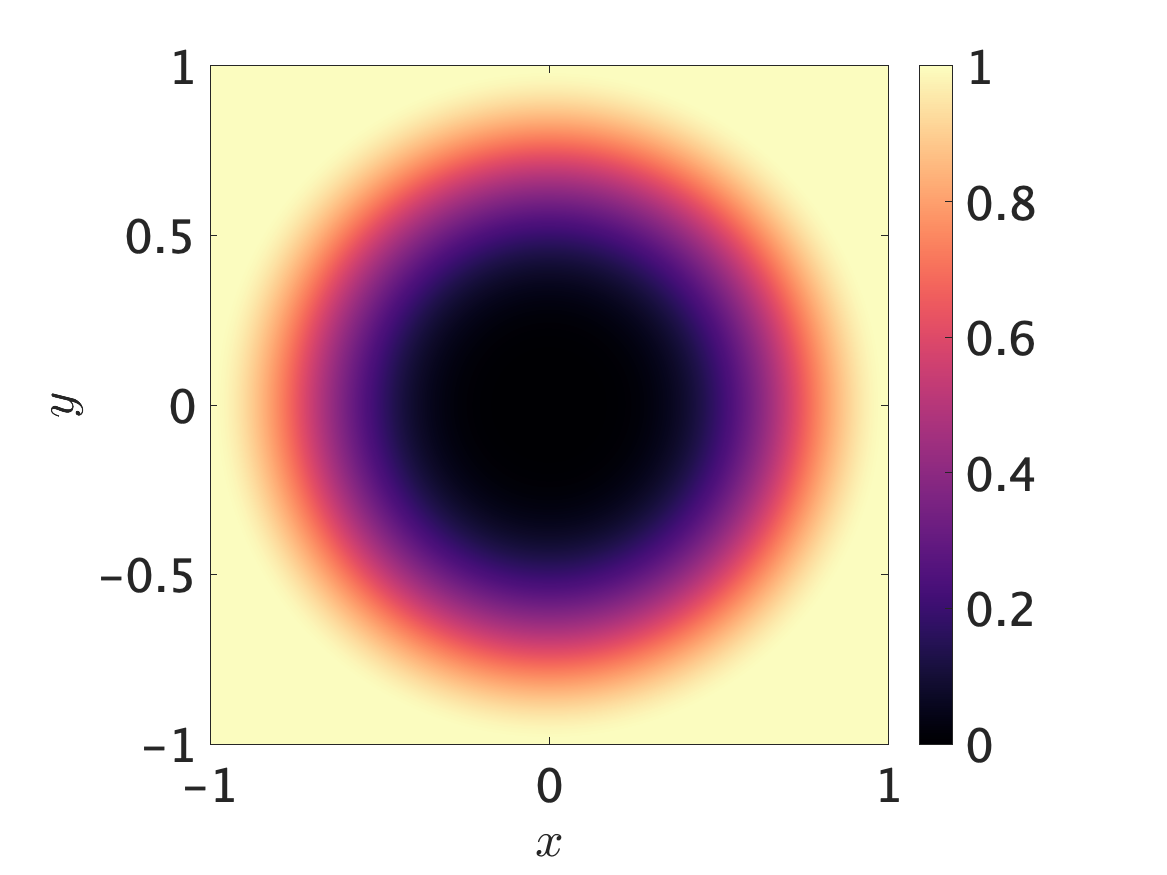}
    \label{fig:variable scattering cross-section}
    }
    \subfloat[Reference]{\includegraphics[clip, trim={0cm, 0cm, 0cm, 0cm}, scale=0.22]{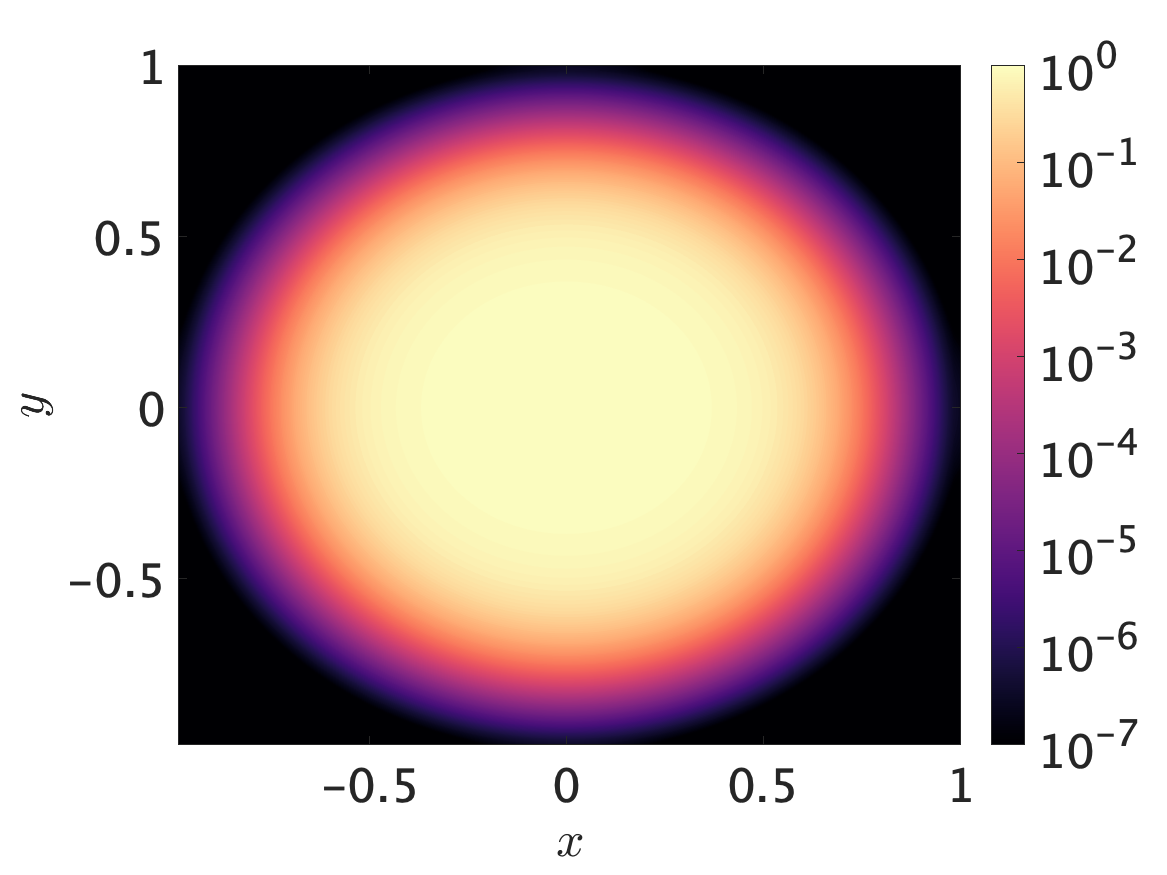}
    \label{fig:variable rho reference}
    }
    \caption{Material cross-section and the reference scalar density $\rho$ at the final time $T = 0.01$ computed using a high-order full-grid scheme for the variable scattering cross-section example.}
    \label{fig:variable rho cross-section and reference}
\end{figure}

The setup for this problem is a spatial domain given by $[-1,1]^{2}$, and we use an $S_{64}$ quadrature rule on $\mathbb{S}^{2}$. Initially, the distribution is isotropic, so we set
\begin{equation*}
    \rho(x,y,0) = \frac{1}{4\pi \zeta^{2}} \exp \left(- \frac{x^{2} + y^{2}}{4\zeta^{2}} \right), \quad g(x,y,\theta, \mu,0) = 0.
\end{equation*}
The final time used for the simulation was $T = 0.01$, and we set the time step as $\Delta t = 0.1 \epsilon \Delta x := 0.001 \Delta x.$
% \begin{equation*}
%     \Delta t = 0.1 \epsilon \Delta x := 0.001 \Delta x.
% \end{equation*}
A plot of the material cross-section \eqref{eq:variable scattering coeff} as well as a plot of the high-order reference scalar density $\rho$ is shown in \cref{fig:variable rho cross-section and reference}. From equation \eqref{eq: IMEX g stage}, we can see that the term $\sigma_{s}/\epsilon^{2}$ varies significantly across the domain and spans both the collisional and free-streaming regimes. The reference solution was obtained using a $128^{2}$ spatial mesh and an $S_{64}$ angular discretization. The particles gradually expand outward from the center of the domain and transition from a free-streaming regime to one that is strongly collisional.

\Cref{fig:variable lr vs fr comparison} compares the density obtained with the proposed low-rank methods against the full-grid reference density $\rho$ presented in \cref{fig:variable rho reference}. The same $128^{2}$ spatial mesh and $S_{64}$ angular discretization was used in both of the low-rank approaches. Each plot shows the point-wise absolute difference on a logarithmic scale against a full-grid reference solution, and we use the same limits on the plots to permit a more direct comparison. It is clear that the unsplit method produces a more accurate solution than the split approach. In both cases, the most significant errors are concentrated near the center of the domain, where the density is the largest. Unlike the unsplit approach, we find that the errors in the split extend outward to the boundaries. 

% Varible rho low-rank vs full-rank at  t = T
\begin{figure}
    \centering
    \subfloat[High-order unsplit]{
        % Trim = {L B R T}
        \includegraphics[clip, trim={0cm, 0cm, 3.1cm, 0cm}, scale=0.22]{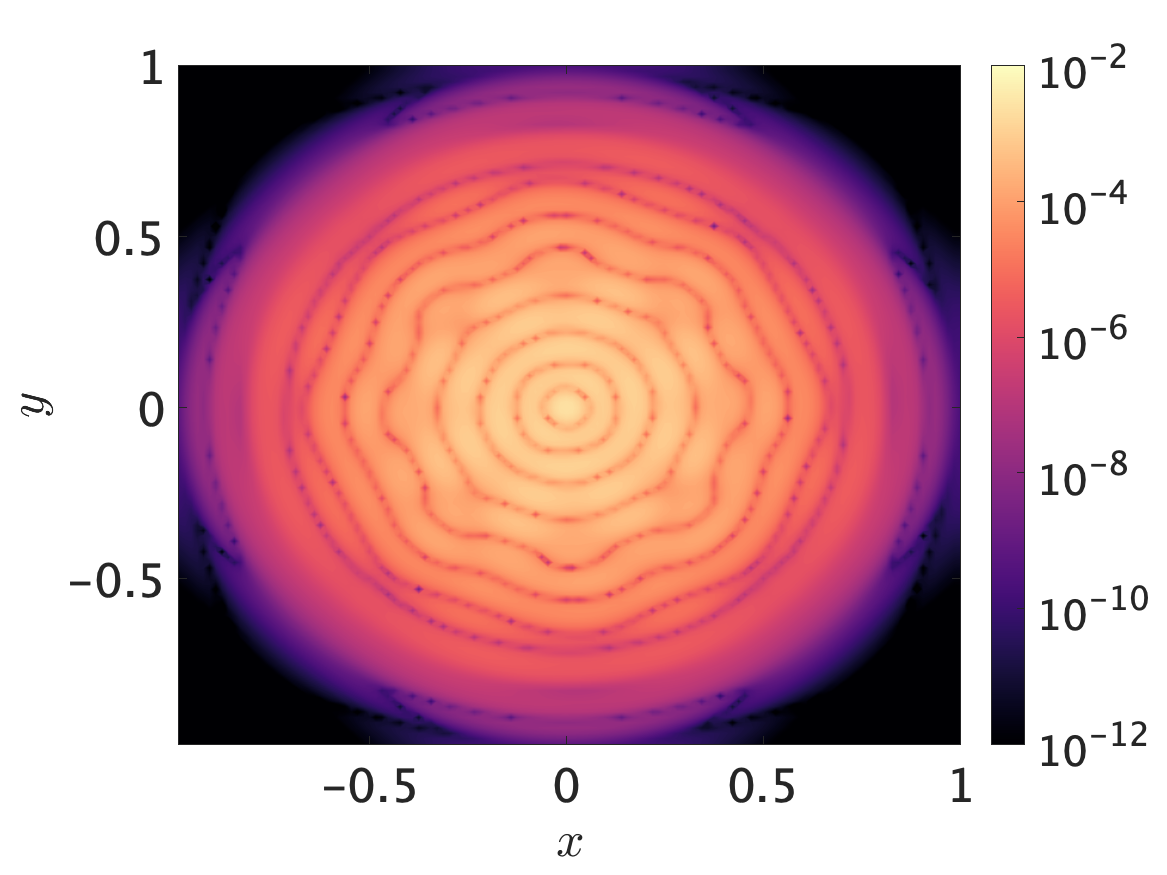}
        \label{fig:variable rho lr vs fr unsplit with proj}
    }
    \subfloat[High-order split]{
        % Trim = {L B R T}
        \includegraphics[clip, trim={0cm, 0cm, 0cm, 0cm}, scale=0.22]{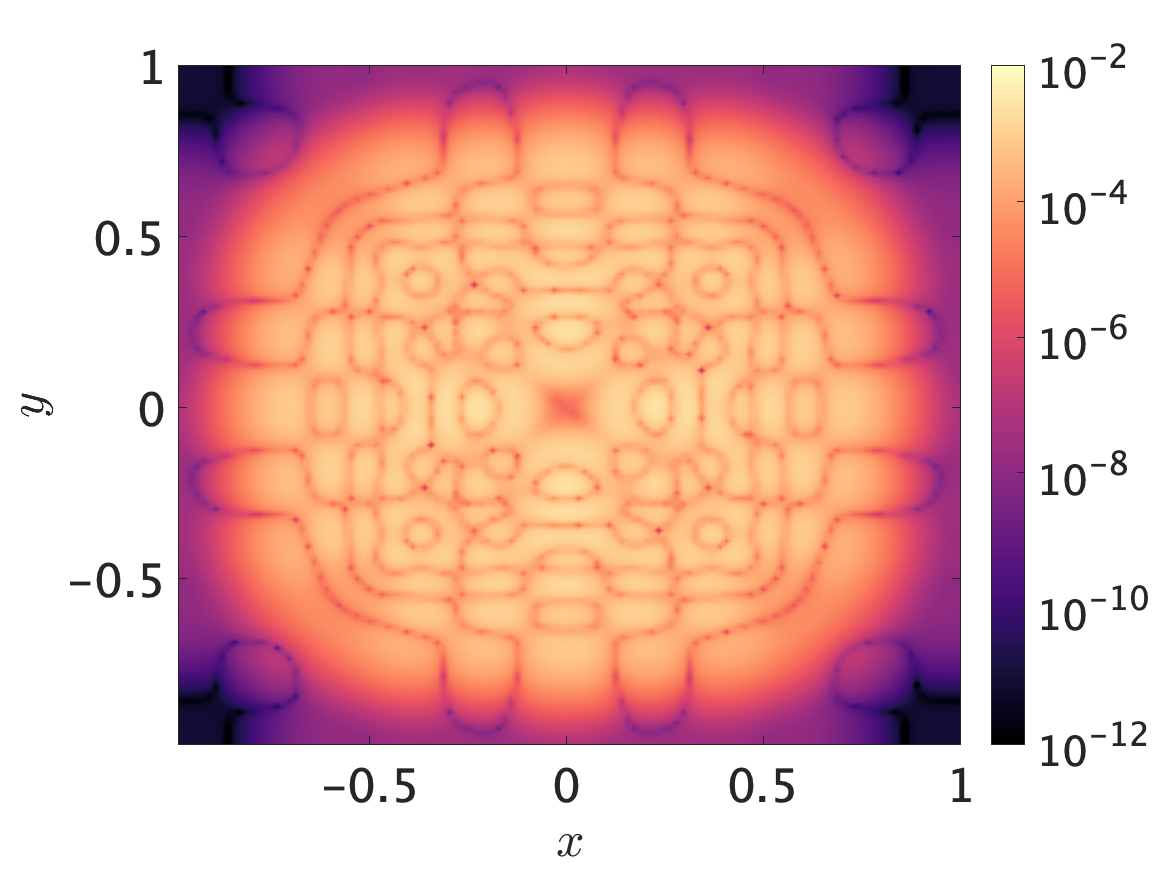}
        \label{fig:variable rho lr vs fr split with proj}
    }
    \caption{Point-wise absolute differences between the densities obtained with low-rank methods for the variable scattering cross-section example.}
    \label{fig:variable lr vs fr comparison}
\end{figure}

\Cref{fig:variable problem g svd data} compares the structure of the relative singular values associated with the matricizations at each of the nodes in the dimension trees for both the unsplit and split representations. Both results were obtained using a $128^{2}$ spatial mesh and an $S_{64}$ angular discretization. The hierarchical rank of $g$ recorded at the final step was $[1,88,88,23,8]$ for the unsplit case and $[1,87,87,16,16,23,9]$ for the split case. In each of the plot windows, the vertical axis represents the relative singular values ranging from $10^{-4}$ to $1$, and the ticks correspond to logarithmic spacing in one order of magnitude. The horizontal axis uses linear spacing of the ranks from $1$ to $90$ in increments of $10$. Since this problem is restricted to the $xy-$plane, we expect symmetries to be present in the dimension $\mu$, which represents the cosine of the polar angle (dimension 3 in \cref{fig:variable svd g unsplit} and dimension 4 in \cref{fig:variable svd g split}, respectively). Both methods capture this feature, as the singular values decay rapidly beyond a rank of $9$. This suggests a benefit to the tensorized angular discretization in the low-rank method. In the split approach, we note that the relative singular values of $g$ associated with the bases at the leaf nodes $\{x\}$ and $\{y\}$ (dimensions 1 and 2, respectively, in \cref{fig:variable svd g split}) decay rapidly beyond rank 16. 

 % SVD data for g at t = T
\begin{figure}
    \centering
    \subfloat[Unsplit $g$]{
        % Trim = {L B R T}
        \includegraphics[clip, trim={0cm, 0cm, 0cm, 0cm}, scale=0.3]{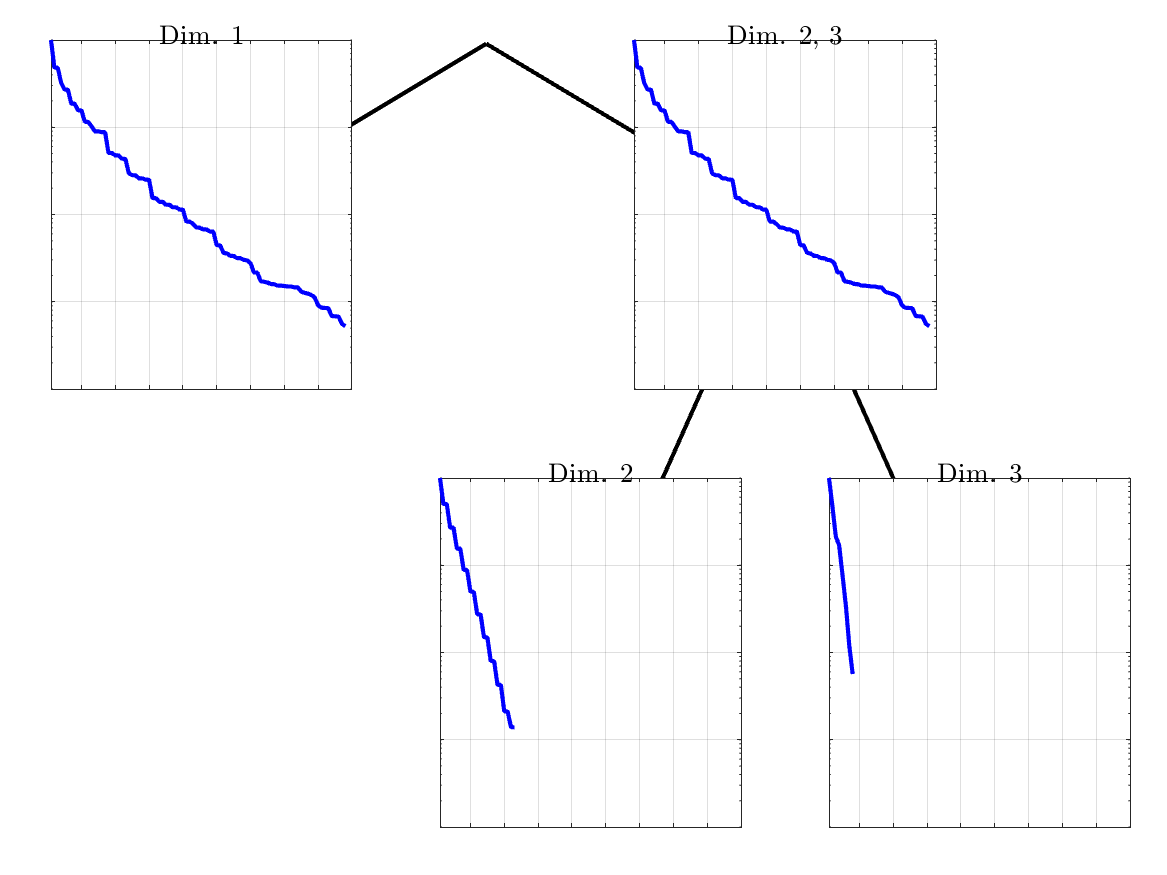}
        \label{fig:variable svd g unsplit}
    }
    \subfloat[Split $g$]{
        % Trim = {L B R T}
        \includegraphics[clip, trim={0cm, 0cm, 0cm, 0cm}, scale=0.3]{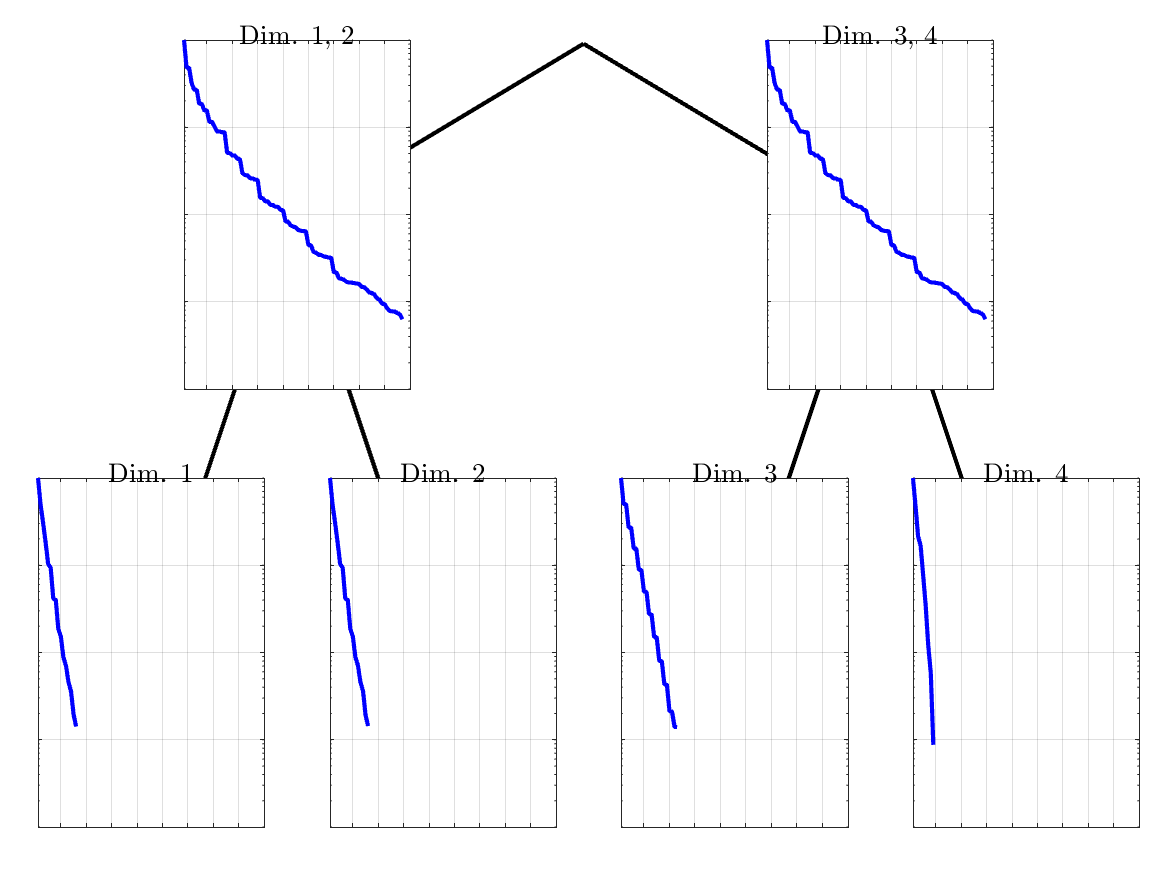}
        \label{fig:variable svd g split}
    }
    \caption{Plots of the relative singular values versus rank for the matricizations at each of the nodes in the dimension tree for $g$ at the final time $T = 0.01$ using unsplit and split dimension trees in the variable scattering cross-section example.}
    \label{fig:variable problem g svd data}.
\end{figure}

The time evolution of the hierarchical ranks for the tensors used to represent the microscopic component $g$ are shown in \cref{fig:variable problem g rank vs time}. We compare the unsplit and split representations using three different spatial meshes whose sizes are $32^{2}$, $64^{2}$, and $128^{2}$. While there are similarities in the growth patterns of the dimensions, both the unsplit and split approaches show a slight decrease in the hierarchical ranks for nodes associated with the spatial dimensions as the mesh resolution increases. This suggests that low-rank structures become increasingly prevalent in proportion to the mesh resolution. In both approaches, the ranks for nodes $\{x,y\}$ and $\{\theta,\mu\}$ grow the fastest, while those associated with the singleton dimensions $\{\theta\}$ and $\{\mu\}$ grow slowly with time and remain relatively small throughout the simulation. The rank of the singleton node $\{\mu\}$ remains small in each case due to the symmetry in that direction, which shows the benefit of the tensorized low-rank discretization. In problems with free-streaming and collisional features, it is worth pointing out that resolving the free-streaming component requires many ordinates, while regions of high collisionality reguire much less. The ability to use a tensorized angular domain in problems with low-rank structures means that many ordinates can be used with only a slight increase in memory cost. Similar conclusions were found in other low-rank methods, e.g., \cite{PengDLR2023discrete-ordinates}, where it was shown that ray effects could be eliminated by adopting more accurate angular discretizations at a slight cost in total memory. Aspects concerning the mitigation of ray-effects in kinetic problems with the proposed high-order methods is something we plan to explore in the future.

% Variable scattering problem ranks as a function of time
\begin{figure}[!htb]
    \subfloat[High-order unsplit]{
        \centering
        % Trim = {L B R T}
        \includegraphics[clip, trim={0cm, 0cm, 0cm, 0cm}, scale=0.19]{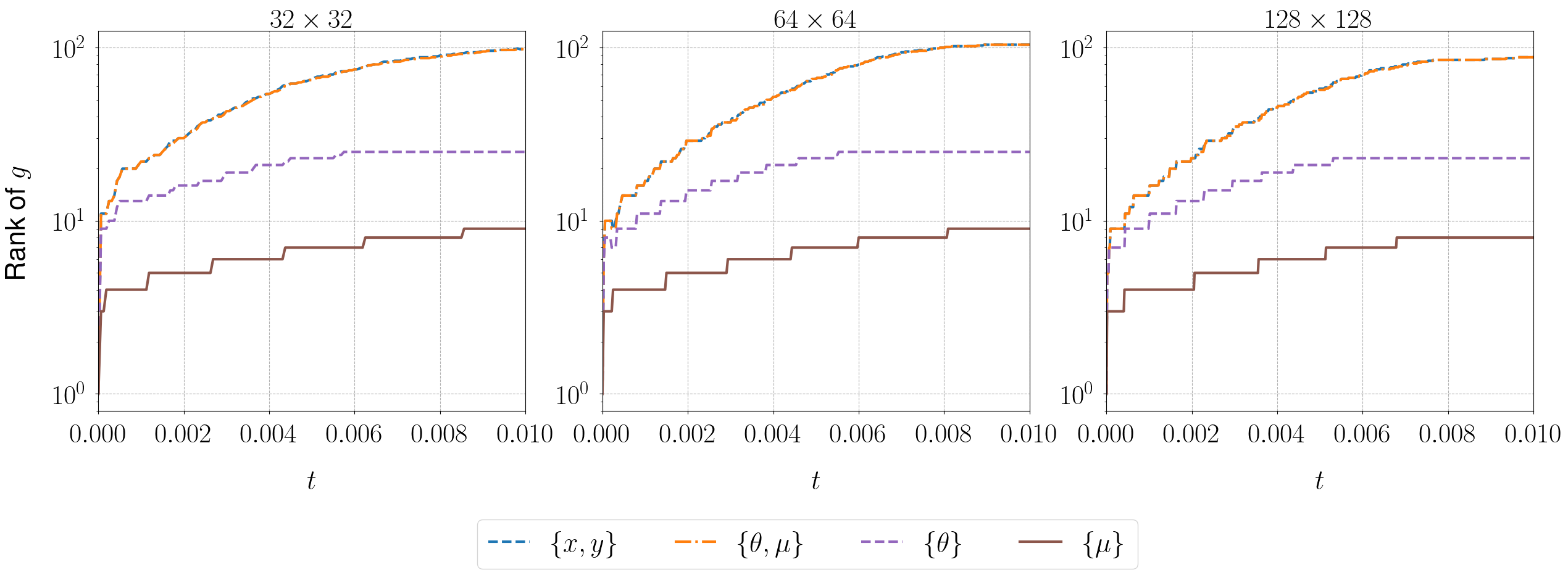}
    }
    \\
    \subfloat[High-order split]{
        \centering
        % Trim = {L B R T}
        \includegraphics[clip, trim={0cm, 0cm, 0cm, 0cm}, scale=0.19]{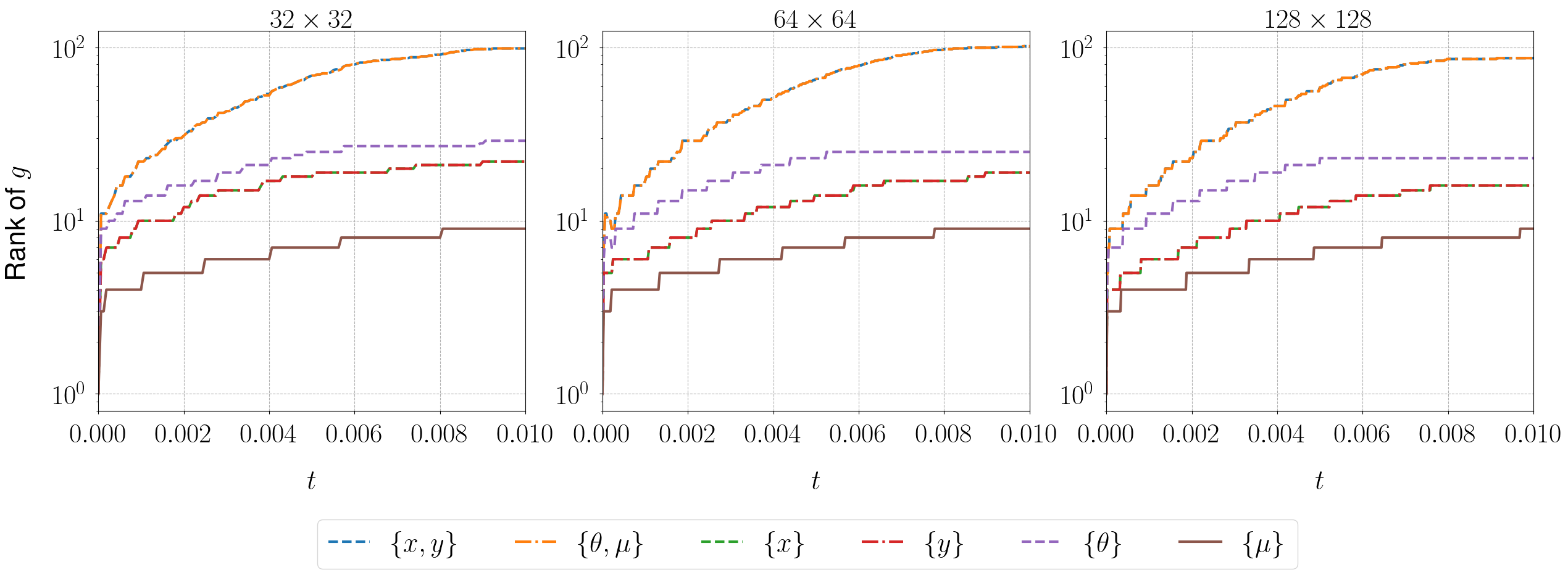}
    }
    \caption{Time evolution of hierarchical ranks in the variable scattering cross-section example with an unsplit and split representations of the tensor $g$.}
    \label{fig:variable problem g rank vs time}
\end{figure}

Plots showing the total DOF and the compression ratio defined by equation \eqref{fig:variable DOF comparison} for the proposed low-rank methods are provided in \cref{fig:variable DOF comparison}. We observe excellent compression ratios among both methods relative to the full-grid approach. In the unsplit approach, the compression ratio for different meshes is similar, since a full-grid representation is used in the spatial dimensions. For the split approach, the compression significantly improves as the mesh is refined, a characteristic that reflects both the CoD for the full-grid solution and the low-rank structure of the numerical solution. In the case with $N = 128$, we can see that the unsplit low-rank approach uses $\approx 1.09\%$ of the storage for the full-grid approach. The improvements are far more significant for the split approach, which uses $\approx 0.04\%$ of the storage for the full-grid solution. This difference in the compression results from a reduction in the total DOF by a factor of $\approx 26.48$ in the split approach versus the analogous unsplit approach, at the cost of an increase in error. The use of projections produces a comparable number of total DOF, irrespective of the resolution, when compared against the same method without the projection. This indicates that the use of the projections does not increase the total DOF and the memory complexity in a significant way.

Lastly, we include plots to confirm the conservation properties of the proposed low-rank methods in \cref{fig:Variable mass data comparison}. When the projection for $g$ is not used, the total mass for $g$ grows to a size which is $\mathcal{O}(1)$, which improves, only slightly, as the mesh is refined. The projection ensures that the total mass contributed by $g$ remains small throughout the simulation. In terms of overall mass conservation, we find both the split and unsplit approaches are effective. We remark that the unsplit approach does not apply truncation to $\rho$, so the total mass is conserved on the order of machine precision, so we do not show this result. The split approach, which applies truncation to $\rho$, shows more noticeable violations in total mass conservation, if the projection for $\rho$ discussed in \cref{subsubsec:mass projection} is not used.

% Variable dofs as a function of time and compression vs full order tensor (unsplit case)
\begin{figure}[!ht]
    \centering
    \subfloat[High-order unsplit]{
        \centering
        \includegraphics[clip, trim={0cm, 0cm, 0cm, 0cm}, scale=0.2]{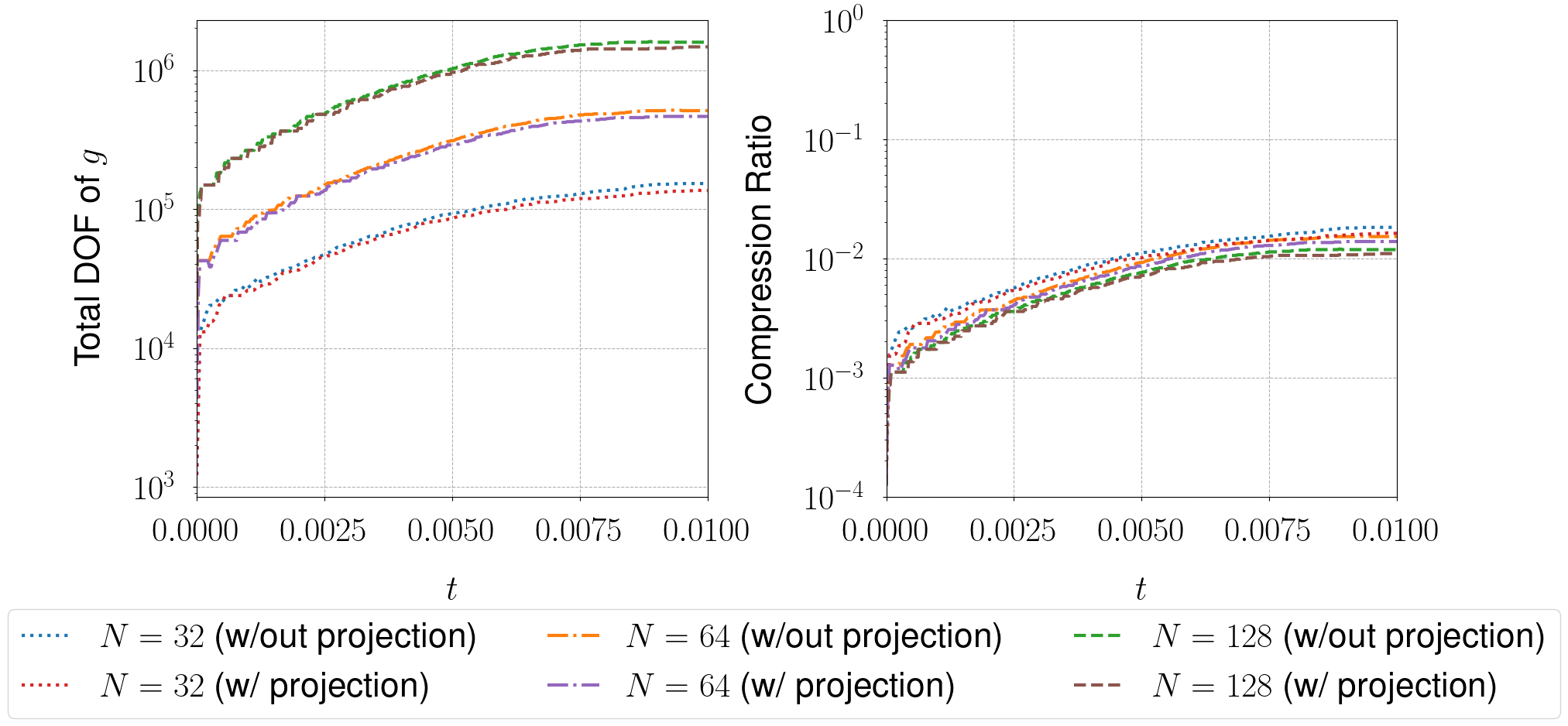}
    }
    \\
    \subfloat[High-order split]{
        \centering
        \includegraphics[clip, trim={0cm, 0cm, 0cm, 0cm}, scale=0.2]{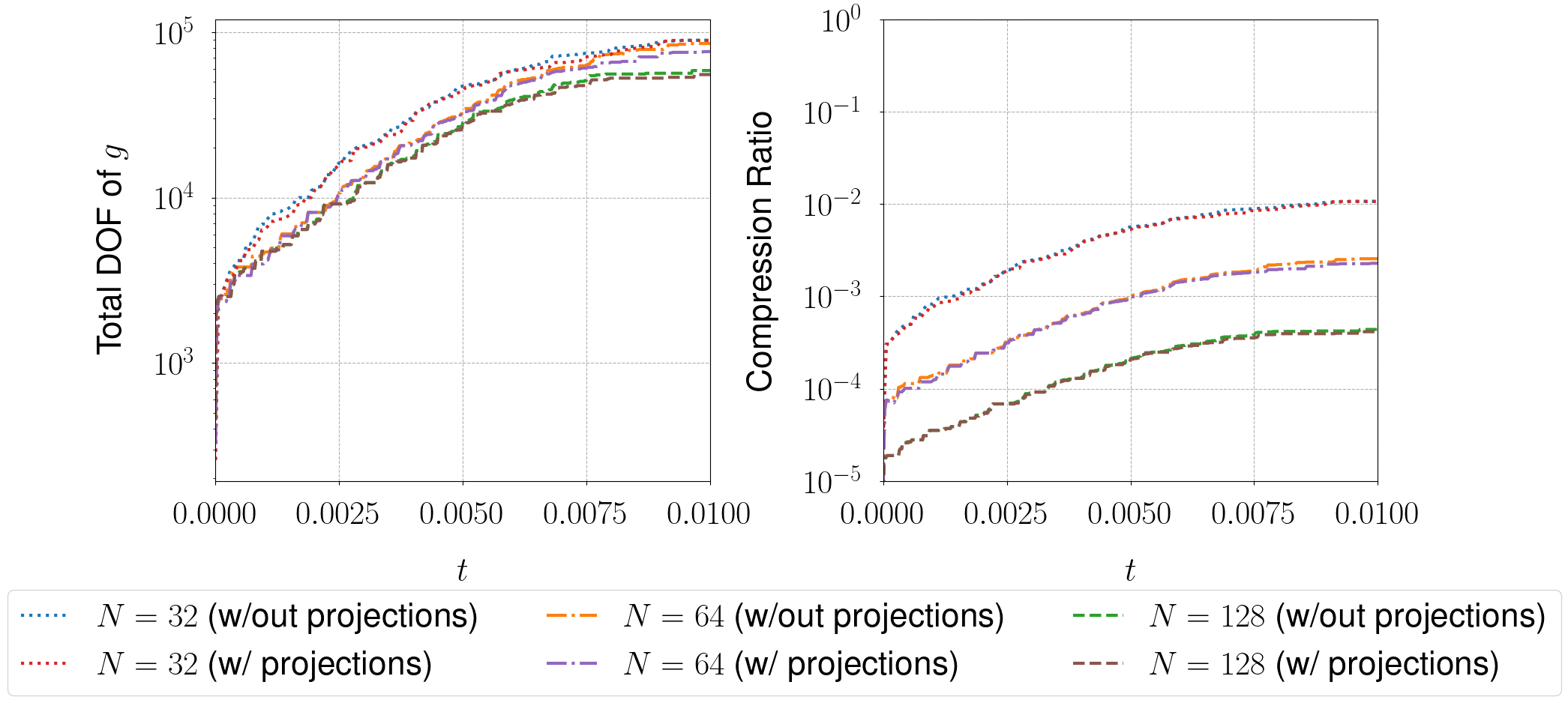}
    }
    \caption{Plots of the total DOF and compression ratio as functions of time for the variable scattering cross-section example.}
    \label{fig:variable DOF comparison}
\end{figure}

% Variable mass of rho and g as a function of time (unsplit case)
\begin{figure}[!ht]
    \centering
    \subfloat[High-order unsplit]{
        \centering
        % Trim = {L B R T}
        \includegraphics[clip, trim={0cm, 0cm, 16.5cm, 0cm}, scale=0.18]{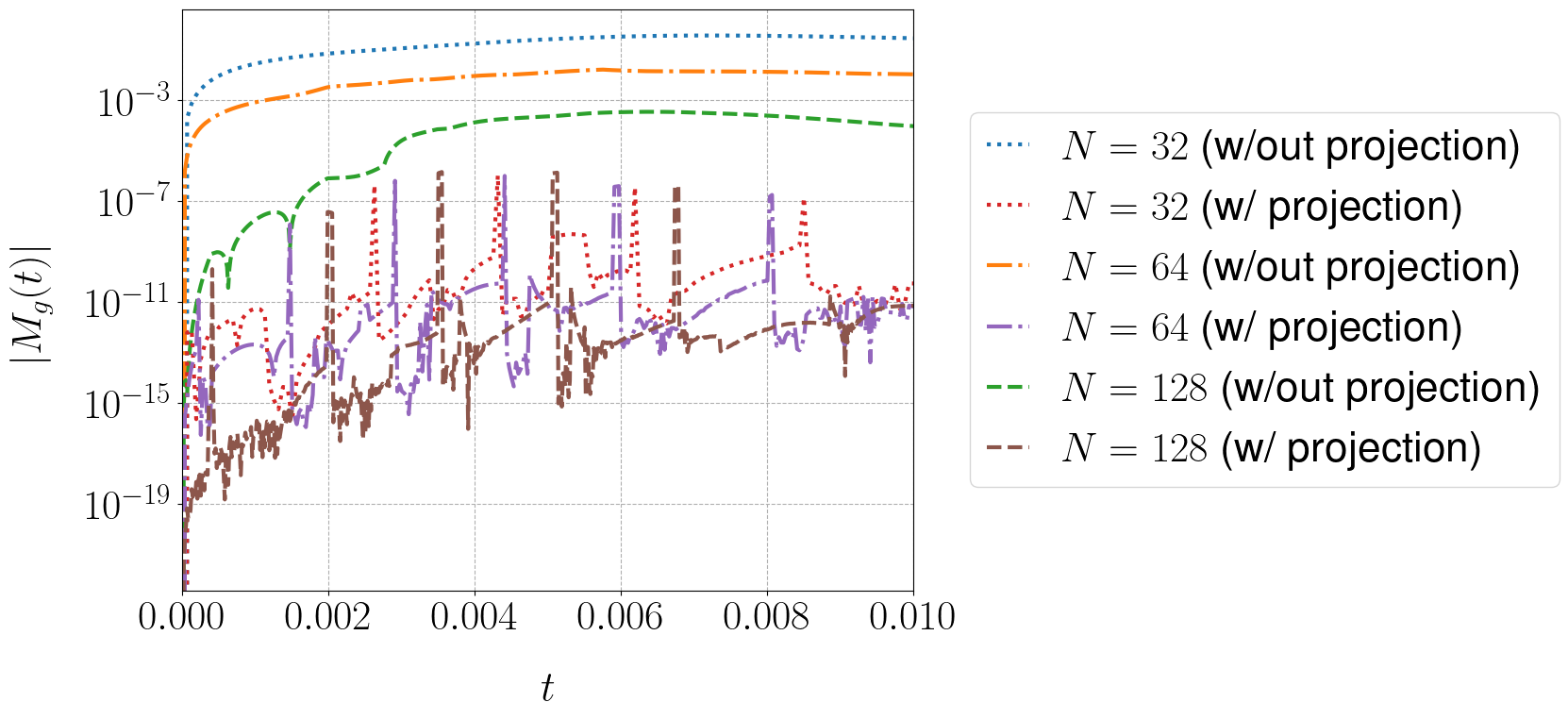}
    }
    \subfloat[High-order split]{
        \centering
        % Trim = {L B R T}
        \includegraphics[clip, trim={2cm, 0cm, 0cm, 0cm}, scale=0.18]{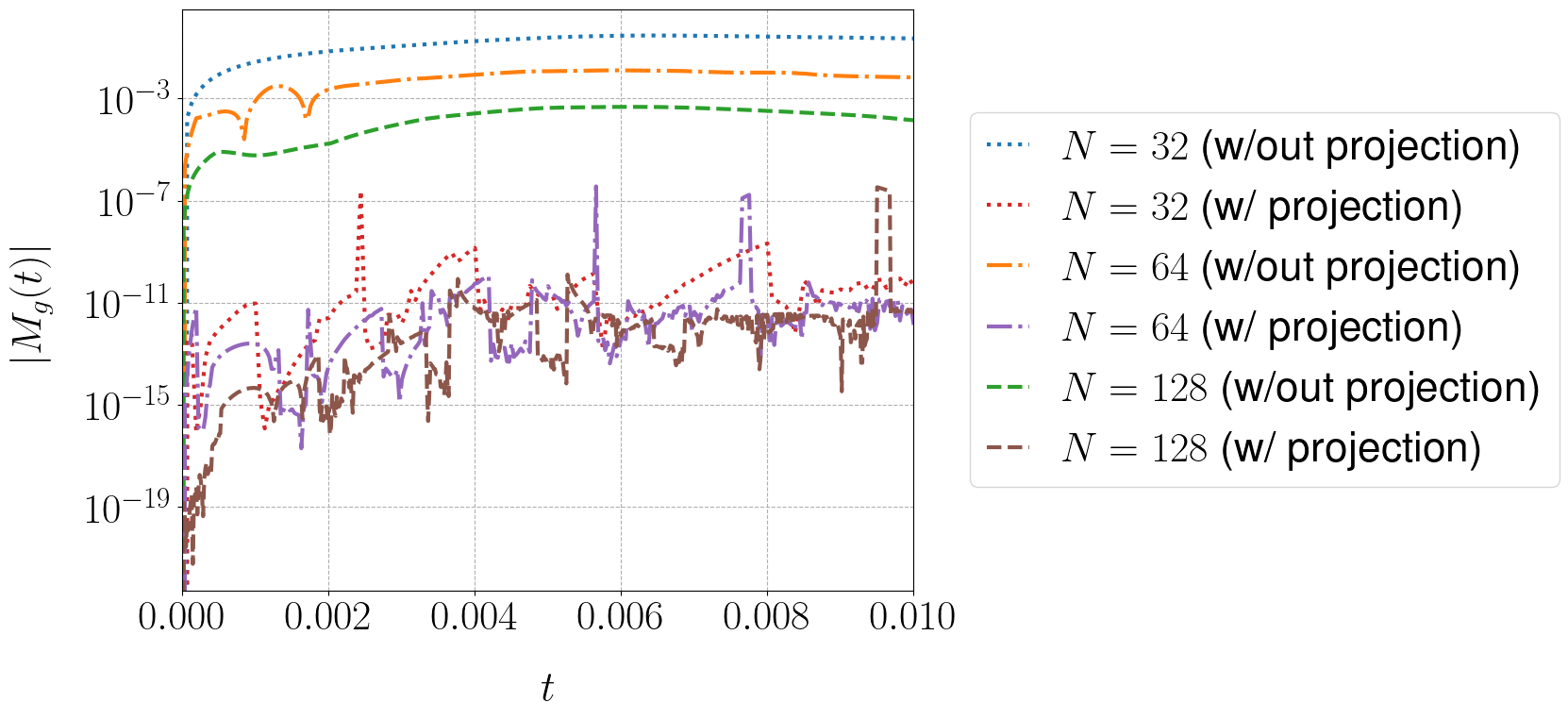}
    }
    \caption{Absolute mass of the microscopic component $g$ for the variable scattering cross-section example.}
    \label{fig:Variable mass data comparison}
\end{figure}

%----------------------------------------------------------------------------------------------------------------

\subsection{Lattice Problem}
\label{subsec:lattice problem}

The next example we consider is a modification of the lattice problem \cite{Brunner2002,Brunner2005riemann}, which is used as a model of a simplified fuel rod assembly in a nuclear reactor. The spatial domain for this problem is the square $[0,7]^{2}$ which is divided into smaller squares of unit area. A source is placed in the center of the domain and absorbing regions are placed around it in a configuration that resembles a checkerboard. The remaining parts of the domain are treated as purely scattering regions. This is regarded as a challenging example due to the discontinuities featured in the material data, which abruptly alternate between optically thin and thick. While it is more conventional to use outflow boundary conditions, is it also possible to adopt periodic boundary conditions, which is the approach taken here.

The profiles of the absorption and scattering cross-sections, along with the source function, are provided in \cref{fig:lattice material data}. We initialize the macroscopic and microscopic variables as
\begin{equation*}
    \rho(x,y,0) = \frac{1}{4\pi \zeta^{2}} \exp \left(- \frac{\left( x - 3.5\right)^{2} + \left( y - 3.5\right)^{2}}{4\zeta^{2}} \right), \quad g(x,y,\theta, \mu,0) = 0,
\end{equation*}
and take $\zeta = 0.1$. We test the method in the kinetic regime by taking $\epsilon = 1$. The angular domain is discretized using an $S_{64}$ angular quadrature rule on $\mathbb{S}^{2}$. We run the simulation to a final time of $T = 2.0$ and set the time step as $\Delta t = 0.1\Delta x.$

% Lattice cross-section data
\begin{figure}[t]
    \centering
    \subfloat[$\sigma_{a}(\mathbf{x})$]{
        % Trim = {L B R T}
        \includegraphics[clip, trim={2.0cm, 0cm, 2.0cm, 0cm}, scale=0.22]{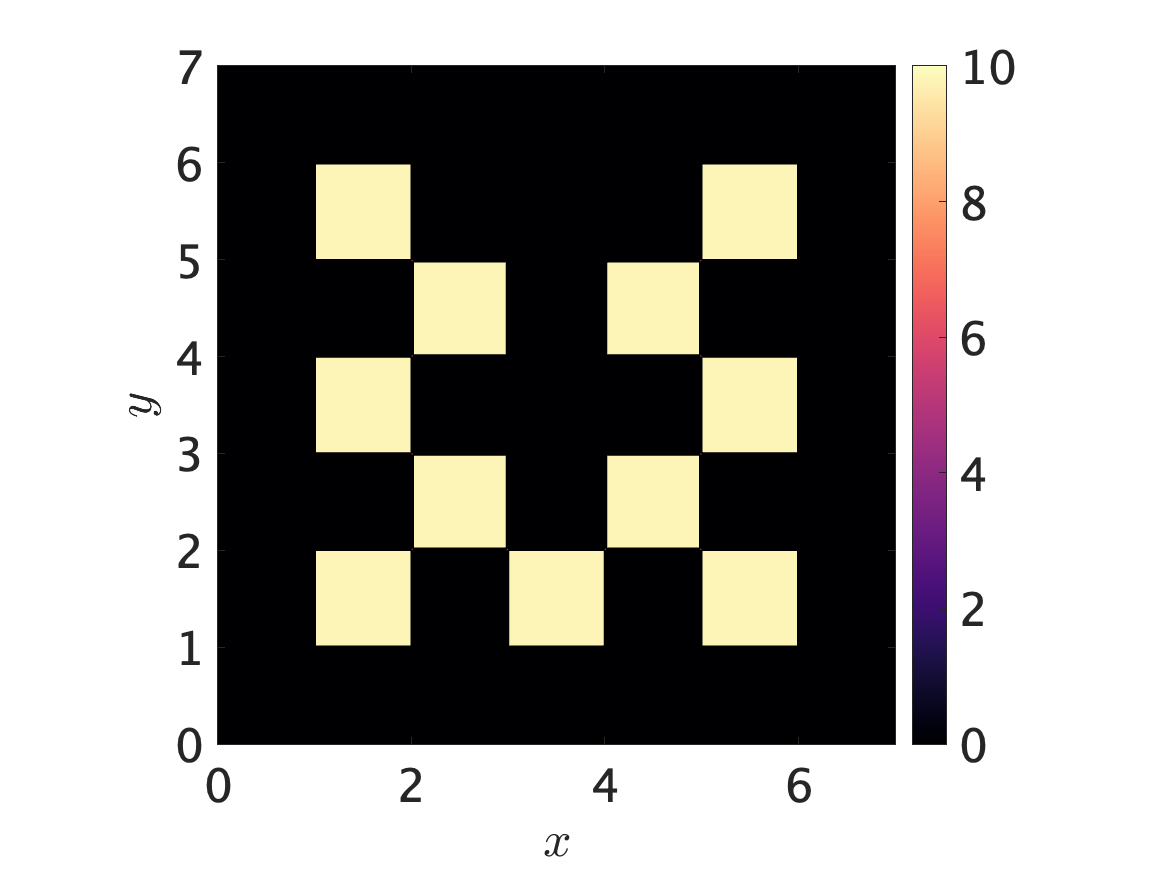}
        \label{fig:lattice sigma_a}
    }
    \subfloat[$\sigma_{s}(\mathbf{x})$]{
        % Trim = {L B R T}
        \includegraphics[clip, trim={2.0cm, 0cm, 2.0cm, 0cm}, scale=0.22]{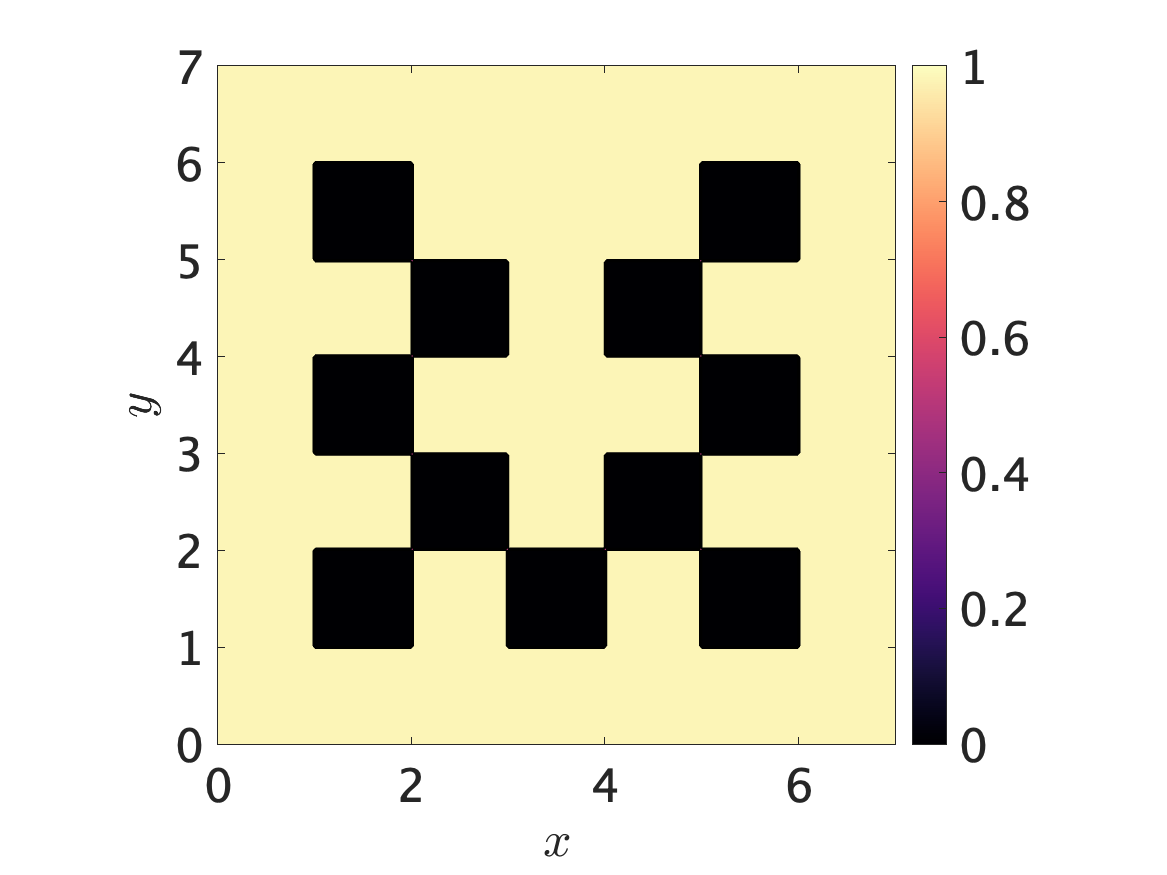}
        \label{fig:lattice sigma_s}
    }
    \subfloat[$Q(\mathbf{x})$]{
        % Trim = {L B R T}
        \includegraphics[clip, trim={2.0cm, 0cm, 2.0cm, 0cm}, scale=0.22]{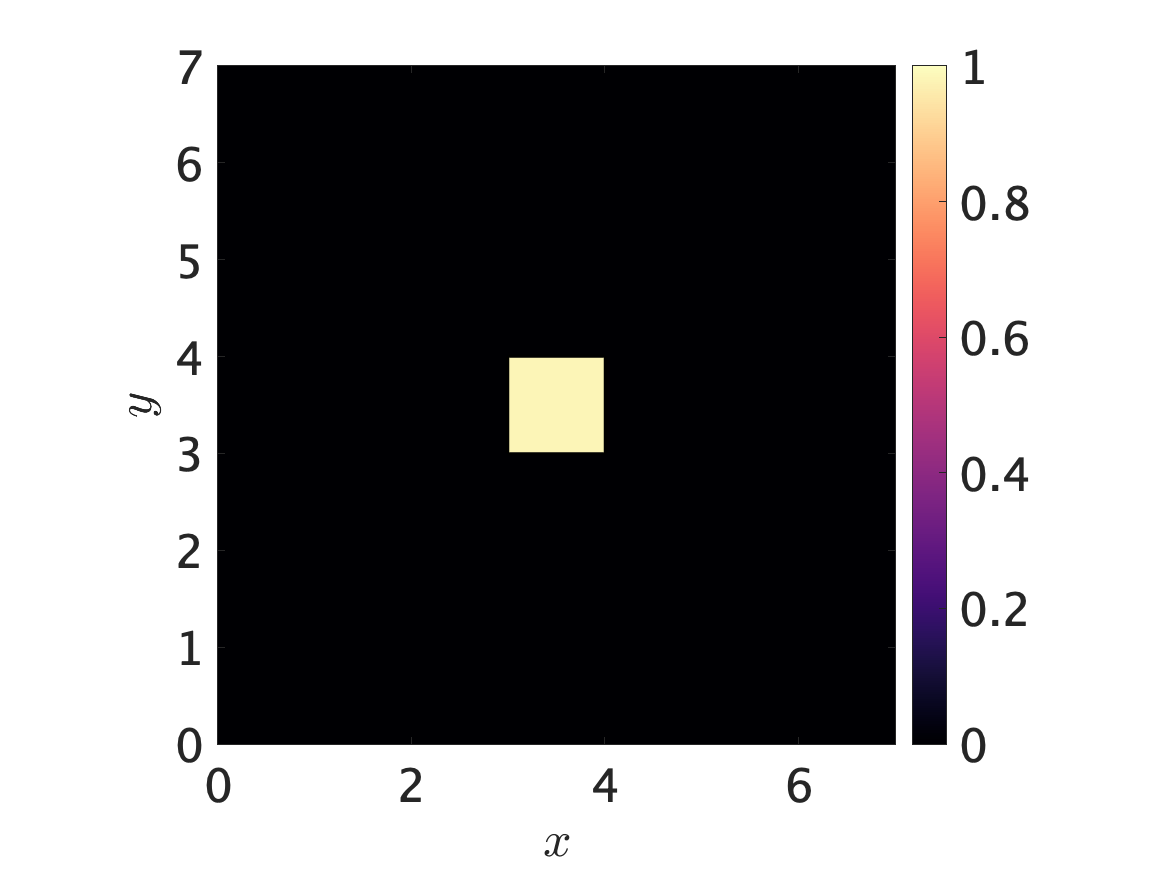}
        \label{fig:lattice source}
    }
    \caption{Material data used in the lattice example.}
    \label{fig:lattice material data}
\end{figure}

We consider the impact of a low-order scheme on low-rank structures by comparing against the high-order scheme adopted in this work. Low-order discretizations are known to be more dissipative than high-order methods, which is an important consideration in the development of low-rank methods, as it can lead to the introduction of artificial low-rank structures. Two different pairings of discretizations for time and space are considered. The high-order schemes use the time and space discretizations discussed at the beginning of \cref{sec:results}. For the low-order discretization, we combine the first-order IMEX method with a second-order, piecewise linear method for space using the MUSCL limiter \cite{vanLeer2006upwind}. Sample plots of the density $\rho$ obtained with different low-rank methods obtained with a $128^{2}$ spatial mesh and $S_{64}$ angular discretization are shown in \cref{fig:lattice low rank comparison} and \cref{fig:lattice low-order vs high-order slices comparison}. The former shows contour plots of the densities, while the latter shows $x$ and $y$ cross-sections of the densities. We note that negative values for the density are observed in the vicinity of the absorbing regions where the density is small. In the contour plots, which use logarithmic scales, we replace the negative values with $10^{-40}$. While this does not impact the stability of the method, such features more noticeable in the split approach. As expected, excessive dissipation is observed in the low-rank scheme based on the low-order discretization.

% Lattice rho low-rank (unsplit and split) at  t = T
\begin{figure}
    \centering
    \subfloat[High-order unsplit]{
        % Trim = {L B R T}
        \includegraphics[clip, trim={0cm, 0cm, 0cm, 0cm}, scale=0.22]{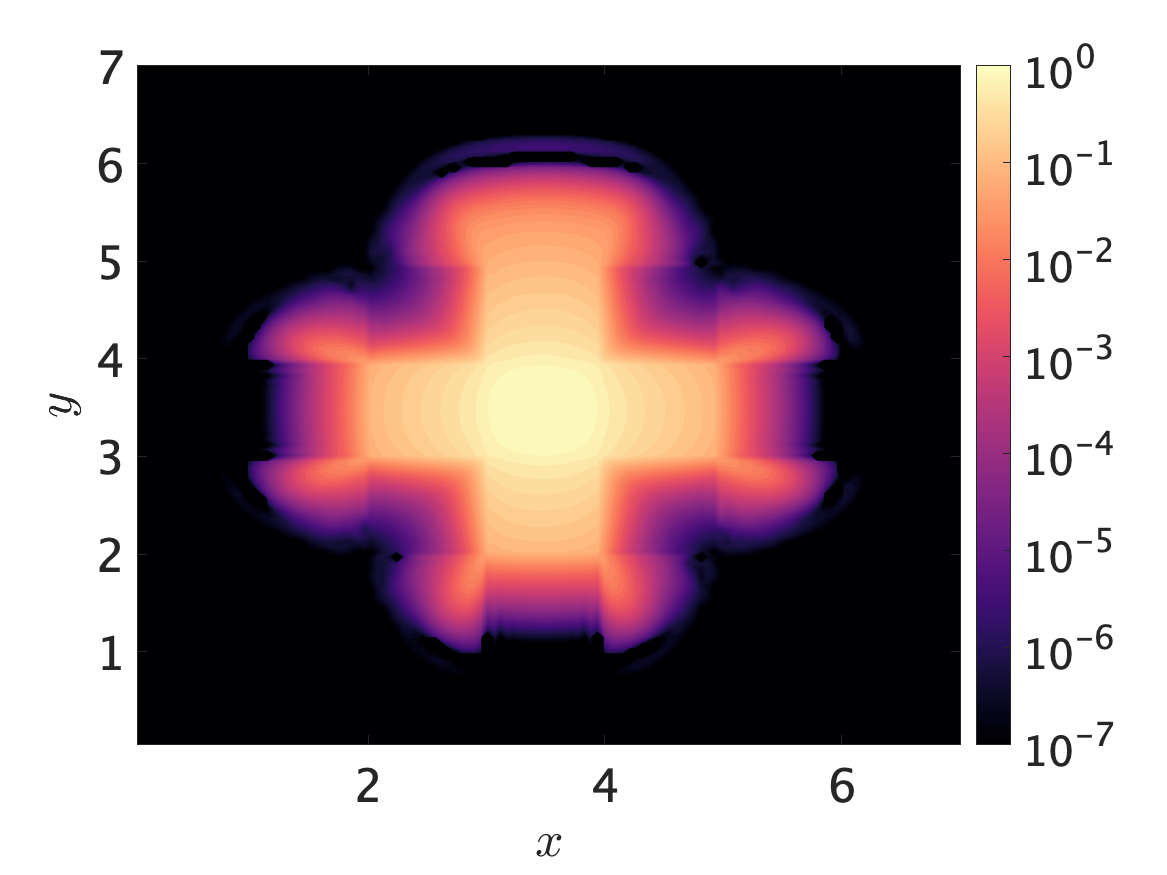}
        \label{fig:lattice rho unsplit}
    }
    \subfloat[High-order split]{
        % Trim = {L B R T}
        \includegraphics[clip, trim={0cm, 0cm, 0cm, 0cm}, scale=0.22]{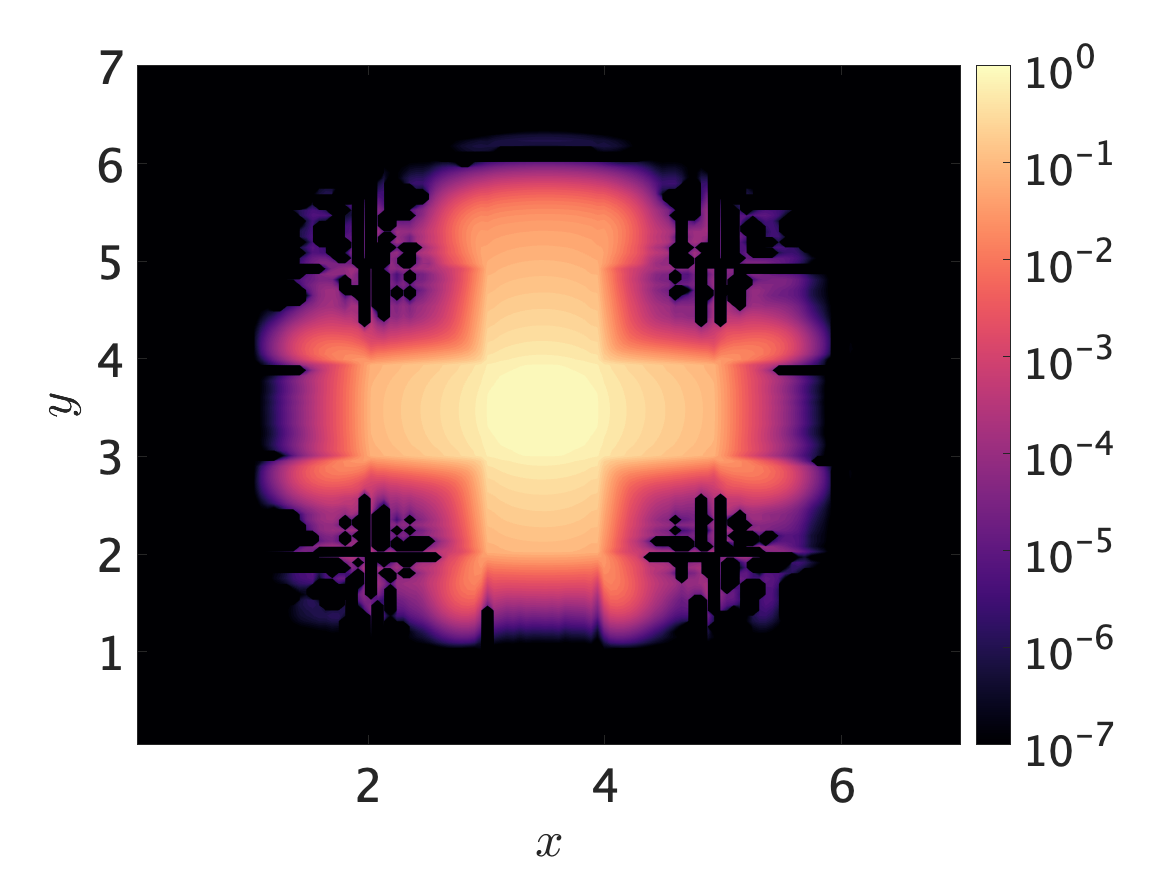}
        \label{fig:lattice rho split}
    }
    \\
    \subfloat[Low-order unsplit]{
        % Trim = {L B R T}
        \includegraphics[clip, trim={0cm, 0cm, 0cm, 0cm}, scale=0.22]{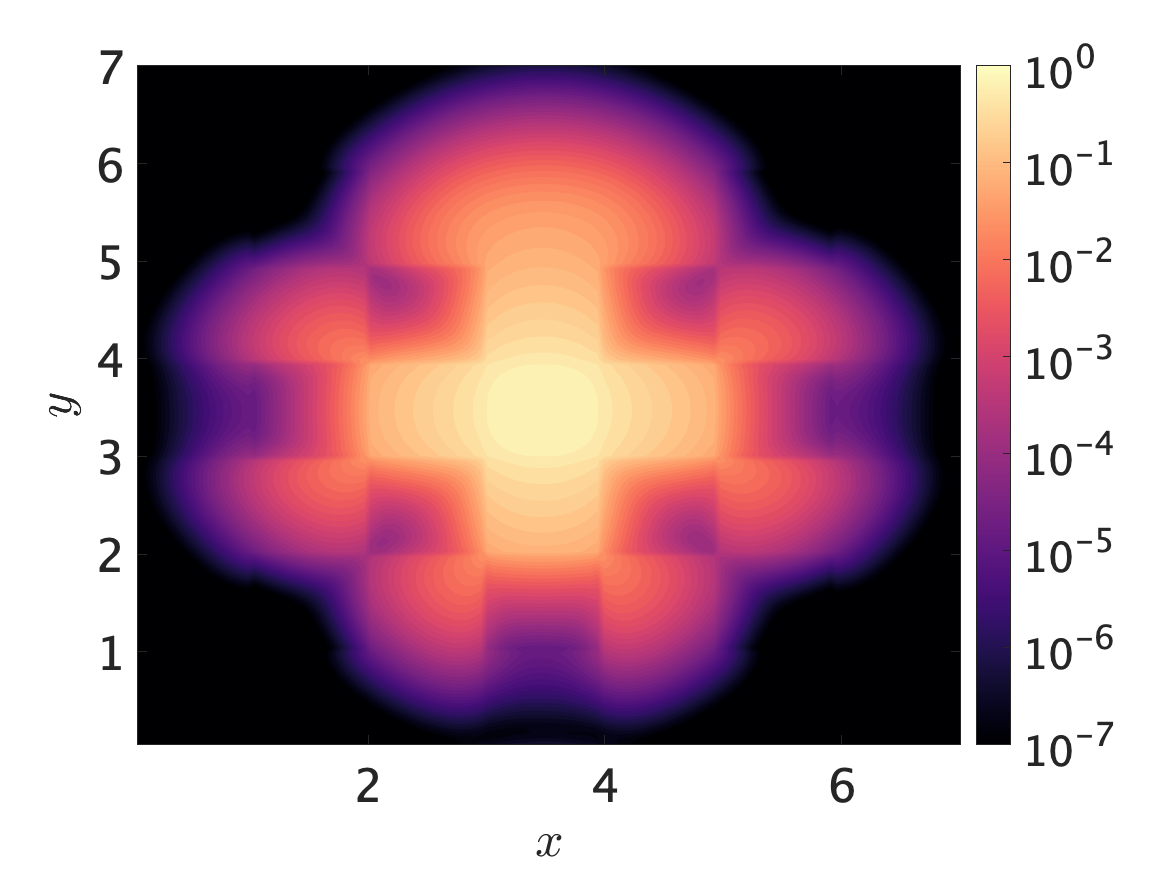}
        \label{fig:lattice rho unsplit low-order}
    }
    \subfloat[Reference]{
        % Trim = {L B R T}
        \includegraphics[clip, trim={0cm, 0cm, 0cm, 0cm}, scale=0.22]{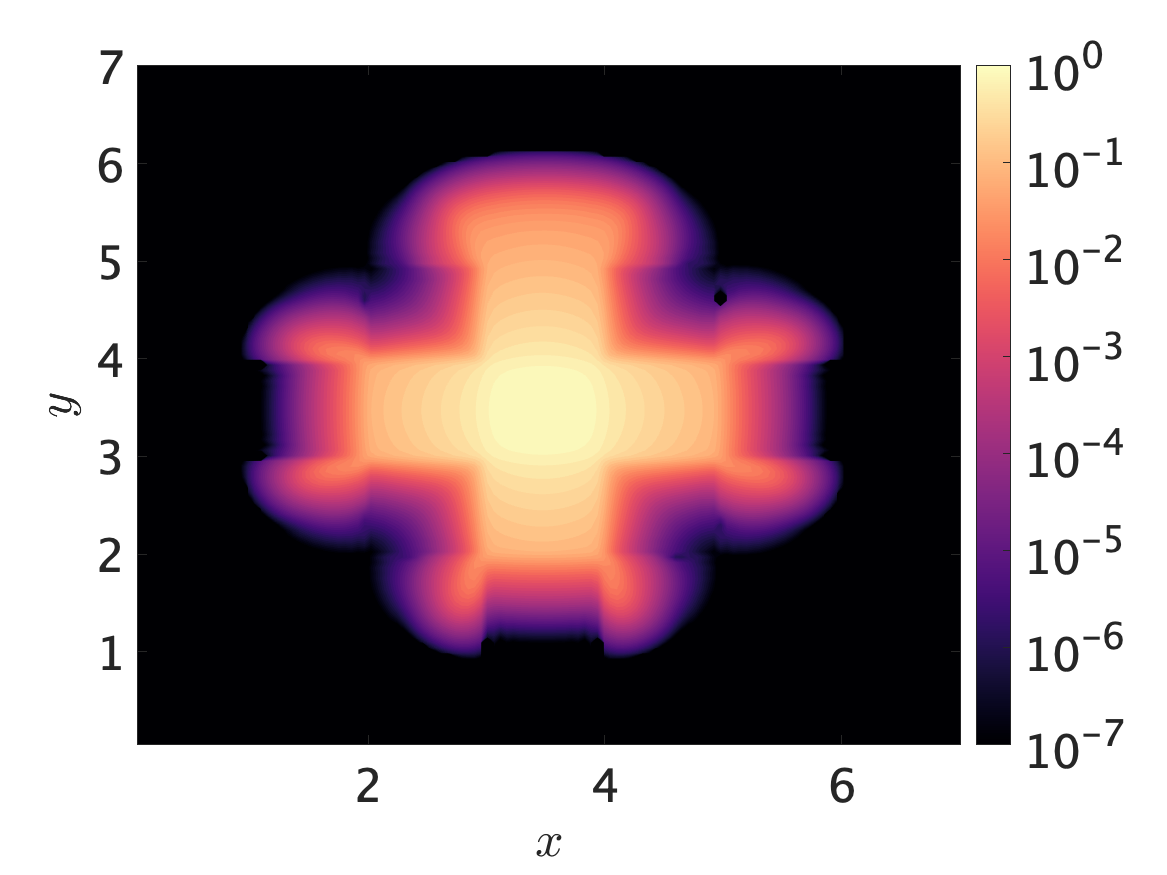}
        \label{fig:lattice rho reference}
    }
    \caption{The scalar density $\rho$ at the final time $T = 2.0$ obtained with different low-rank methods for the lattice problem. We include the high-order reference solution for comparison.}
    \label{fig:lattice low rank comparison}
\end{figure}

% Lattice low-order (IMEX-1 + MUSCL) vs high-order (IMEX-3 + WENO) slices
\begin{figure}
    \centering
    \includegraphics[clip, trim={0cm, 0cm, 0cm, 0cm}, scale=0.19]{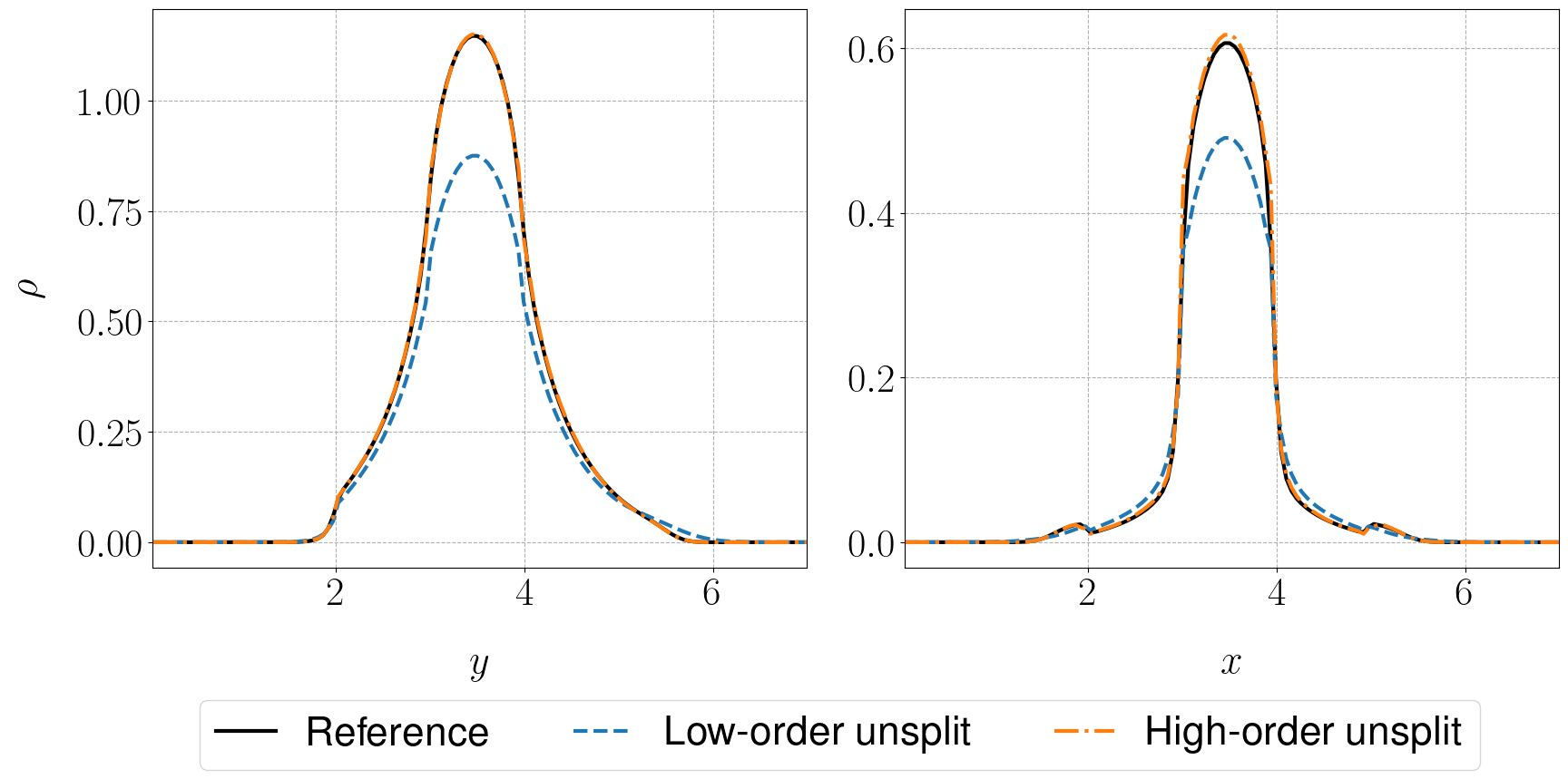}
    \caption{A comparison of unsplit low-rank methods constructed with low-order and high-order discretizations for time and space applied to the lattice problem. We show slices of the scalar density at time $T = 2.0$ along the lines $x = 3.5$ (left column) and $y = 4.046875$ (right column).}
    \label{fig:lattice low-order vs high-order slices comparison}
\end{figure}

In \cref{fig:lattice lr vs fr comparison}, we show plots of the point-wise absolute differences between the densities obtained with the different low-rank methods. In each case, we use the same $128^{2}$ spatial mesh with an identical $S_{64}$ angular discretization and compare the solution against the full-grid implementation. As with the previous example, we find that the high-order unsplit approach yields a better overall approximation of the full-grid solution than the corresponding split approach. Again, we find that the most significant errors are concentrated near the source, which is placed at the center of the domain. In particular, we observe more significant errors at points that coincide with the discontinuities present in the material cross-sections $\sigma_{a}$ and $\sigma_{s}$. The low-rank scheme using the low-order discretization shows considerably larger errors in these regions due to excessive dissipation. Additionally, we observe significant smearing of the density at the corners where the material abruptly changes from optically thin to thick. Such features are better preserved with the high-order discretizations.

% Lattice rho low-rank vs full-rank at  t = T
\begin{figure}
    \centering
    \subfloat[High-order unsplit]{
        % Trim = {L B R T}
        \includegraphics[clip, trim={0.75cm, 0cm, 3.5cm, 0cm}, scale=0.2225]{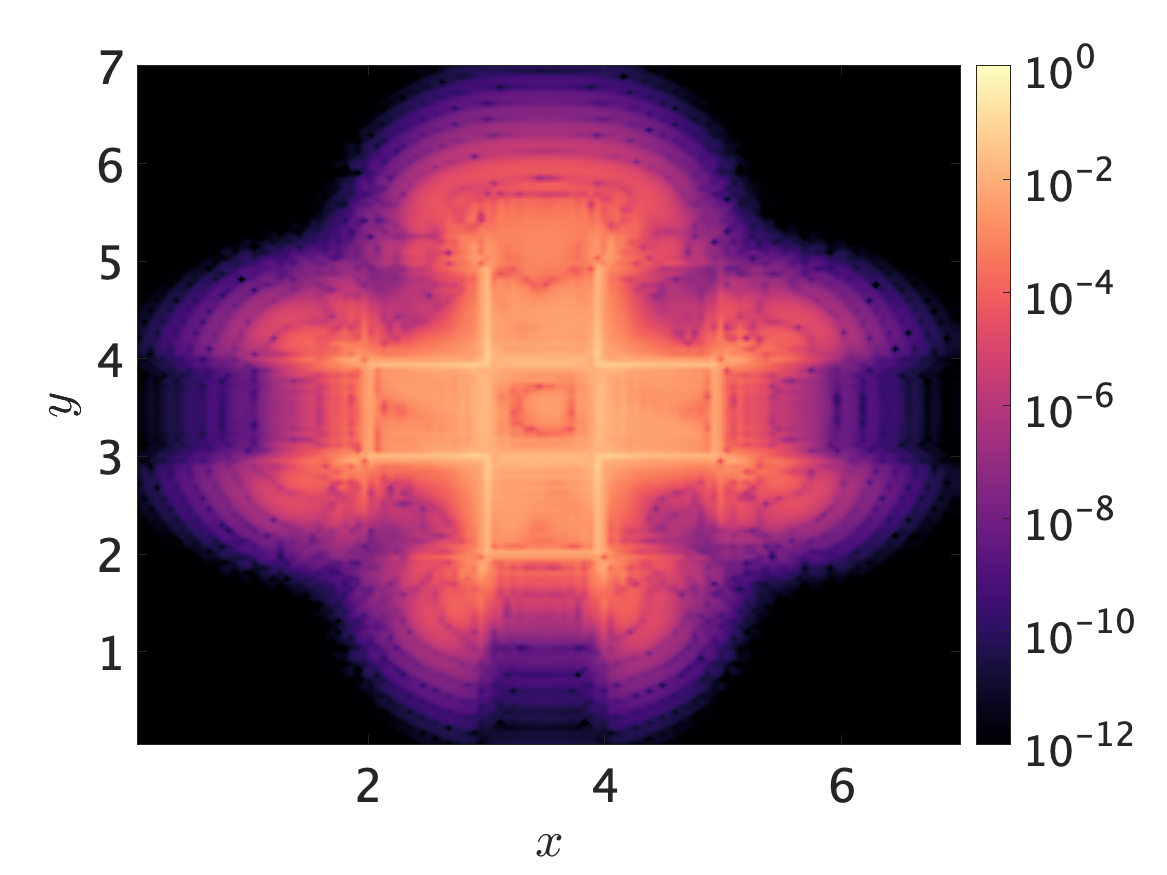}
        \label{fig:lattice rho lr vs fr unsplit with proj}
    }
    \subfloat[High-order split]{
        % Trim = {L B R T}
        \includegraphics[clip, trim={0.75cm, 0cm, 3.5cm, 0cm}, scale=0.2225]{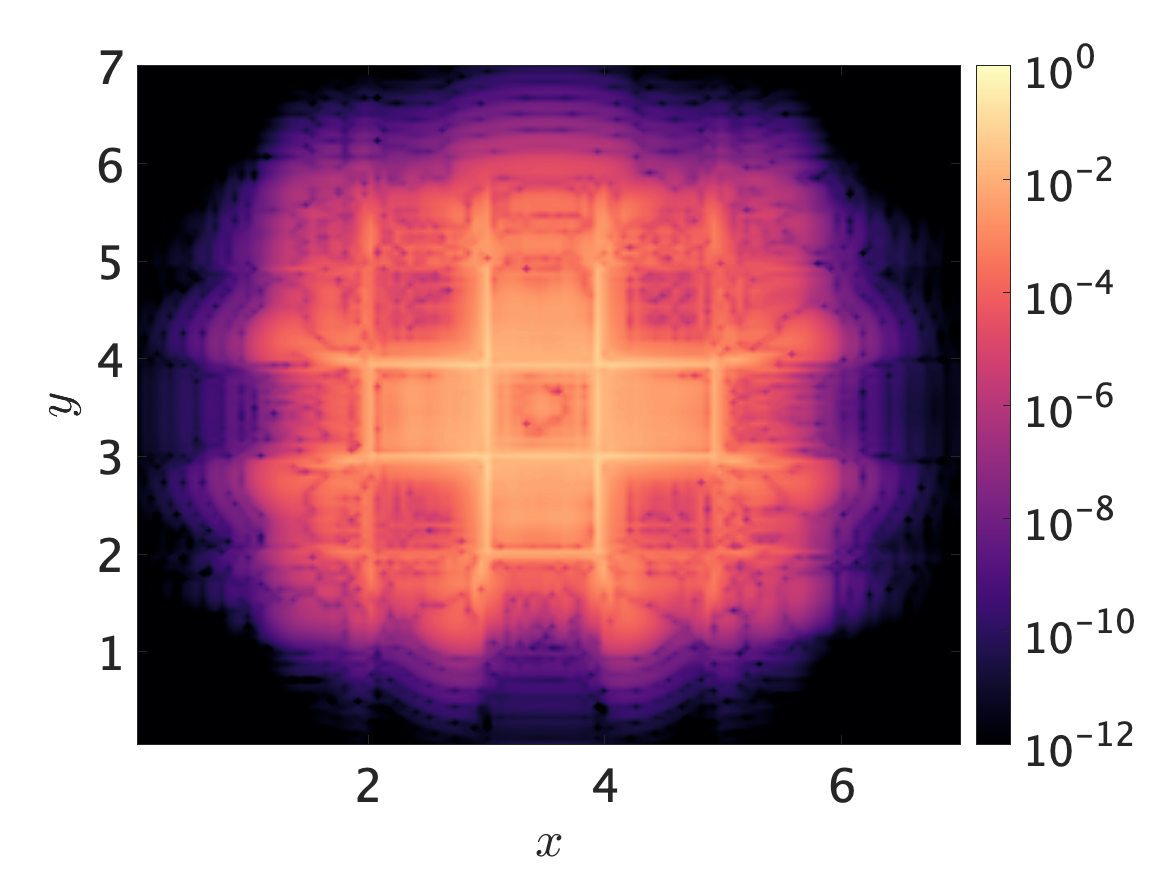}
        \label{fig:lattice rho lr vs fr split with proj}
    }
    \subfloat[Low-order unsplit]{
        % Trim = {L B R T}
        \includegraphics[clip, trim={0.75cm, 0cm, 0.0cm, 0cm}, scale=0.2225]{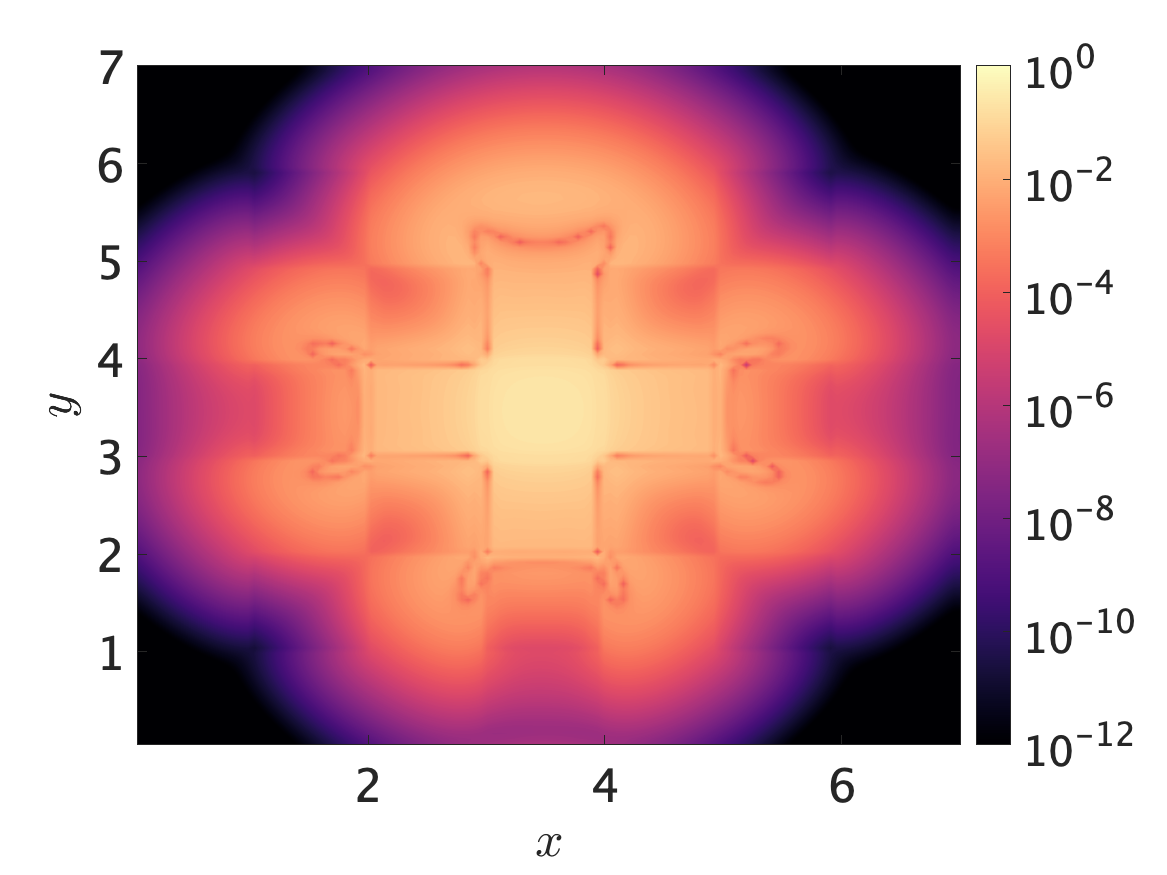}
        \label{fig:lattice rho lr vs fr unsplit with proj low-order}
    }
    \caption{Point-wise absolute differences between the densities obtained with a full-grid method and low-rank methods for the lattice problem.}
    \label{fig:lattice lr vs fr comparison}
\end{figure}

\Cref{fig:lattice problem g svd data} shows the structure of the relative singular values for the matricizations at each of the nodes in the dimension tree for $g$ obtained with unsplit and split approaches using high-order discretizations. These results were obtained using a $128^{2}$ spatial mesh and an $S_{64}$ angular discretization. The hierarchical rank of $g$ recorded at the final step was $[1,357,357,61,13]$ for the unsplit case and $[1,358,358,46,47,58,14]$ for the split case. In each of the plot windows, the vertical axis represents the relative singular values ranging from $10^{-4}$ to $1$, and the ticks correspond to logarithmic spacing in one order of magnitude. The horizontal axis uses linear spacing of the ranks from $1$ to $360$ in increments of $10$. We note that both methods show high rank in the nodes $\{x,y\}$ and $\{\theta,\mu\}$. Further, the split approach shows a reasonable decay along the dimensions associated with individual spatial dimensions (e.g., dimensions 1 and 2 in \cref{fig:lattice svd g split}). Additionally, we expect the solution $g$ to exhibit a stronger angular dependence given that this problem is more kinetic than the previous example. Further, the presence of jump discontinuities in the material data places a significant demand on the global basis of the tensor method. However, we still find tensorization of the angular domain to be beneficial for this problem, as the singular values associated with the dimensions $\{\theta\}$ and $\{\mu\}$ (dimensions 2 and 3 in \cref{fig:lattice svd g unsplit} and dimensions 3 and 4 in \cref{fig:lattice svd g split}, respectively) show reasonable decay that, again, reflects the symmetry in $\mu$.

% Lattice SVD data for g at t = T
\begin{figure}
    \centering
    \subfloat[High-order unsplit]{
        % Trim = {L B R T}
        \includegraphics[clip, trim={0cm, 0cm, 0cm, 0cm}, scale=0.3]{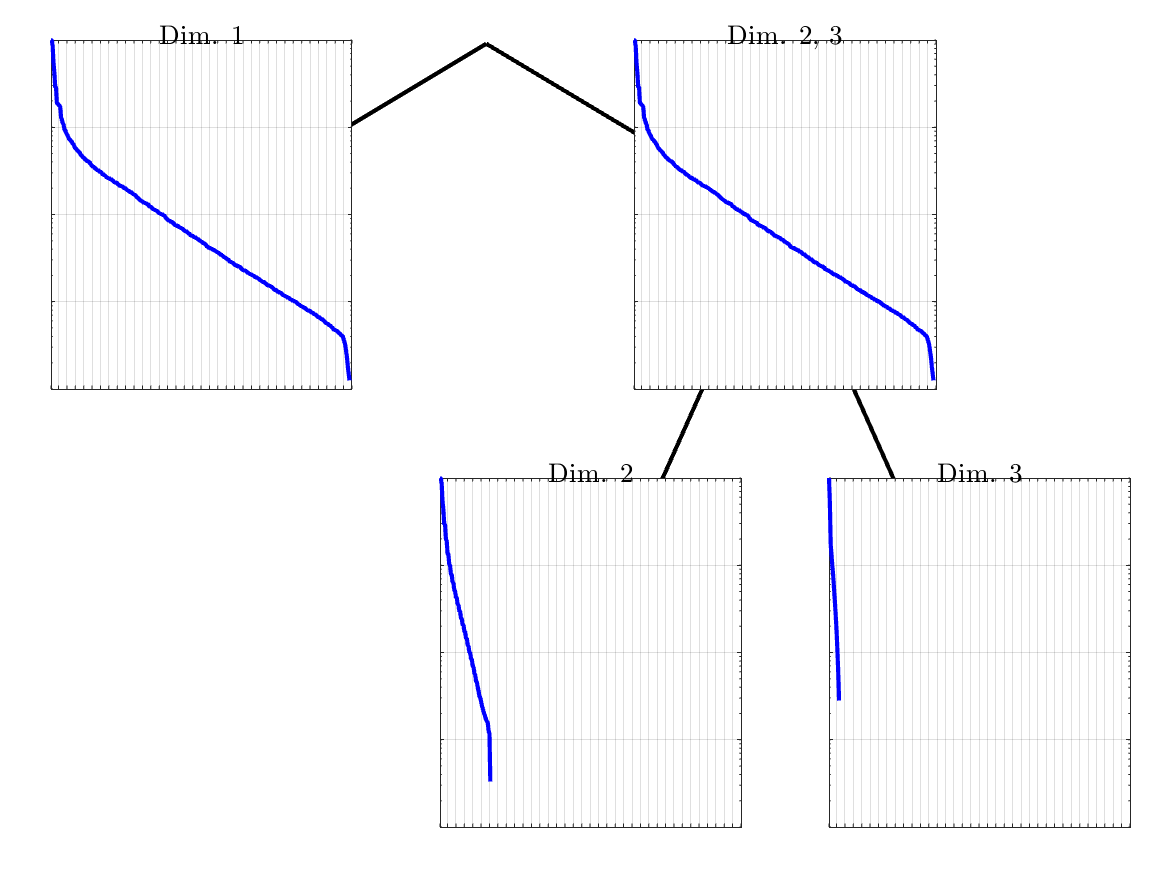}
        \label{fig:lattice svd g unsplit}
    }
    \subfloat[High-order split]{
        \centering
        % Trim = {L B R T}
        \includegraphics[clip, trim={0cm, 0cm, 0cm, 0cm}, scale=0.3]{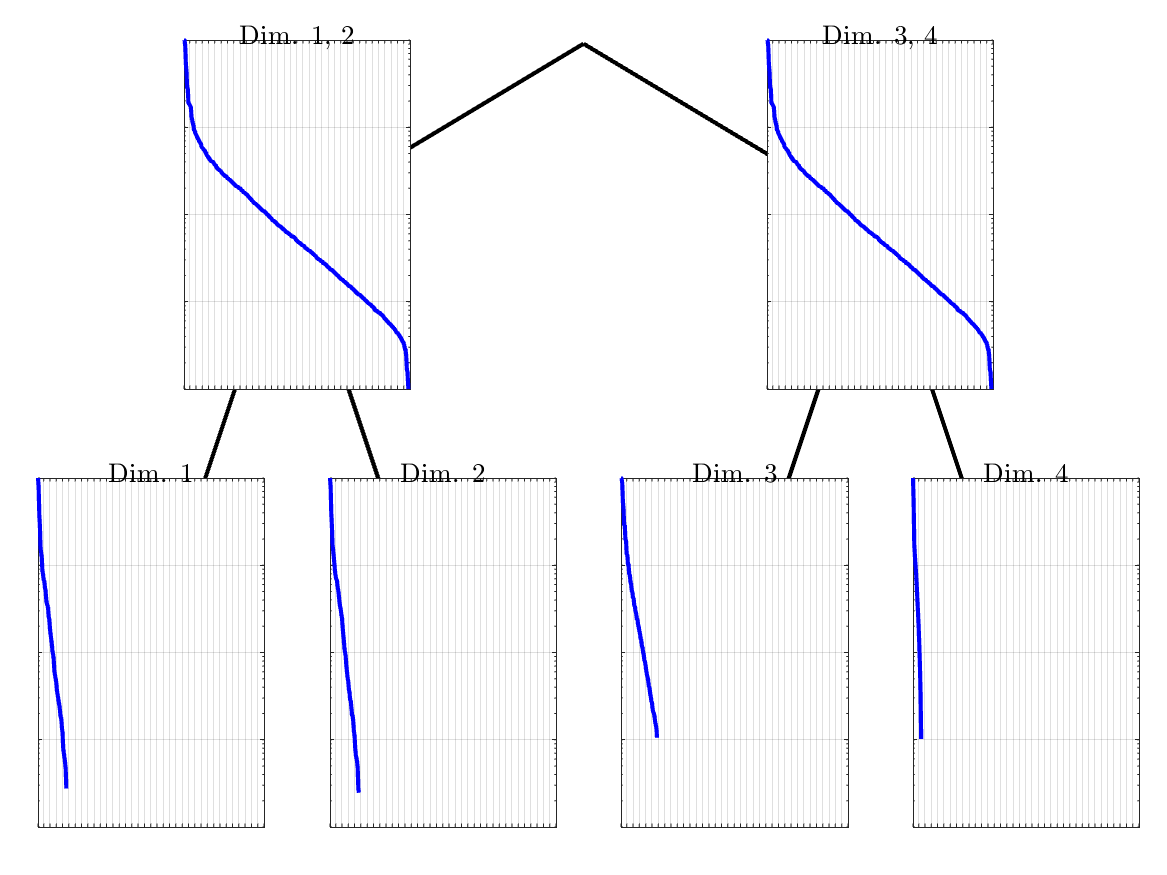}
        \label{fig:lattice svd g split}
    }
    \caption{Plots of the relative singular values versus the rank for the matricizations at each of the nodes in the dimension tree for $g$ at the final time $T = 2$ using unsplit (left) and split (right) dimension trees in the lattice problem.}
    \label{fig:lattice problem g svd data}
\end{figure}

The time evolution of the hierarchical ranks of $g$ with unsplit and split high-order discretizations are presented in \cref{fig:lattice problem g rank vs time}. We, again, considered three different spatial meshes whose sizes are $32^{2}$, $64^{2}$, and $128^{2}$. Unlike the previous example, we find that the maximum rank of $g$ increases with the resolution of the mesh across each of the dimensions. For both the unsplit and split dimension trees, we, again, find that the nodes associated with $\{x,y\}$ and $\{\theta,\mu\}$ grow significantly faster than those associated with the angular dimensions, a feature that persists for each of the simulations. We also find that the rank growth for the angular dimensions follows similar patterns in both the unsplit and split approaches. The hierarchical ranks associated with the nodes $\{x\}$ and $\{y\}$, in the split approach, indicate the presence of low-rank features that can reduce the total DOF. Unlike the previous example, the maximum hierarchical rank for this problem is seen to increase with the mesh resolution, as the grid begins to resolve fine-scale features. This is observed regardless of the dimension tree. The growth of the hierarchical ranks in the high-dimensional function $g$ obtained using the low-order approach is provided in \cref{fig:lattice low-order g ranks vs time}. The low-order scheme shows no growth in the hierarchical ranks of $g$ after time $t \approx 1.5$. Comparing this with \cref{fig:Lattice unsplit ranks g projection}, which corresponds to the high-order scheme, shows several key differences. Most notably, the latter shows growth in the ranks until the final time of the simulation. Additionally, the overall size of the ranks obtained with the low-order approach are considerably smaller than those obtained with the high-order method. For example, at the final time of the simulation using the $128^{2}$ mesh, the hierarchical ranks of g are $[1,132,132,35,8]$ and $[1,357,357,61,13]$ for the low-order and high-order approaches, respectively. This indicates that additional ranks are necessary in order to achieve a more faithful representation of the function $g$. A closer examination of the data collected at the final time for this case reveals that the high-order method uses a factor of $\approx 2.82$ times more total DOF of than the low-order scheme. However, the additional DOF is more than compensated by the increase in accuracy, as indicated in \Cref{fig:lattice lr vs fr comparison}.

% Lattice problem ranks as a function of time
\begin{figure}
    \centering
    \subfloat[High-order unsplit]{
        \includegraphics[clip, trim={0cm, 0cm, 0cm, 0cm}, scale=0.19]{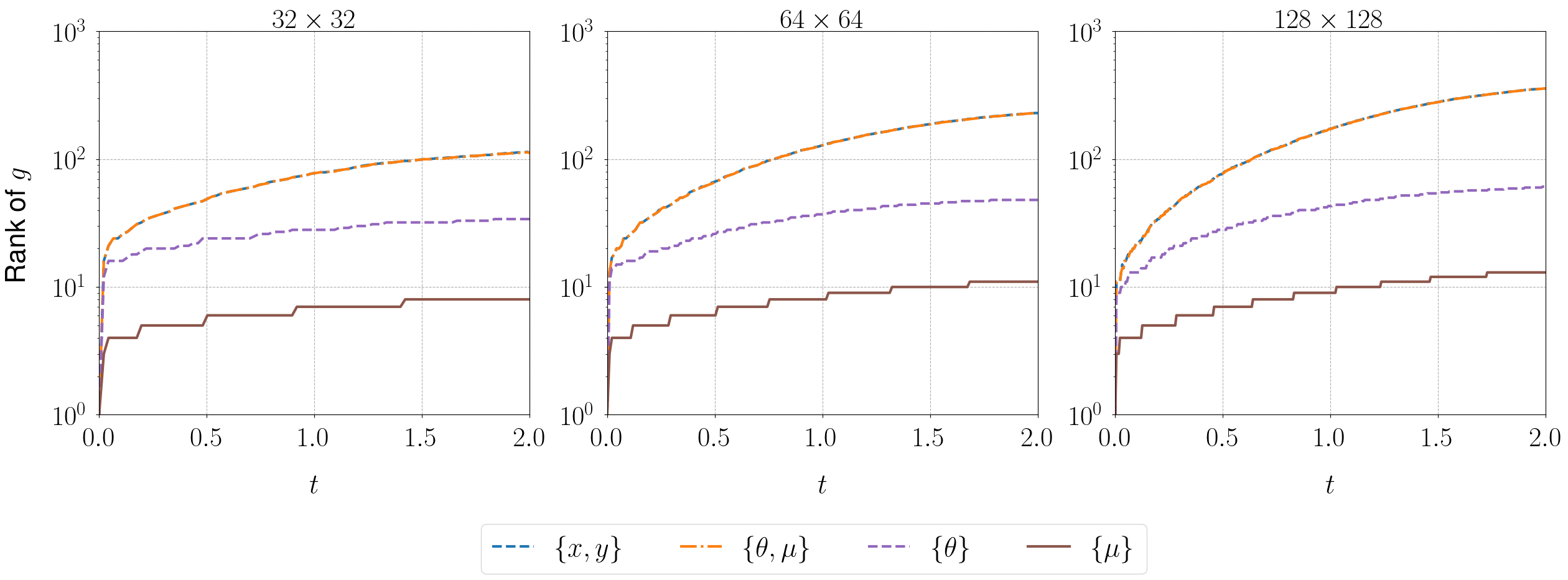}
        \label{fig:Lattice unsplit ranks g projection}
    }
    \\
    \subfloat[High-order split]{
        \includegraphics[clip, trim={0cm, 0cm, 0cm, 0cm}, scale=0.19]{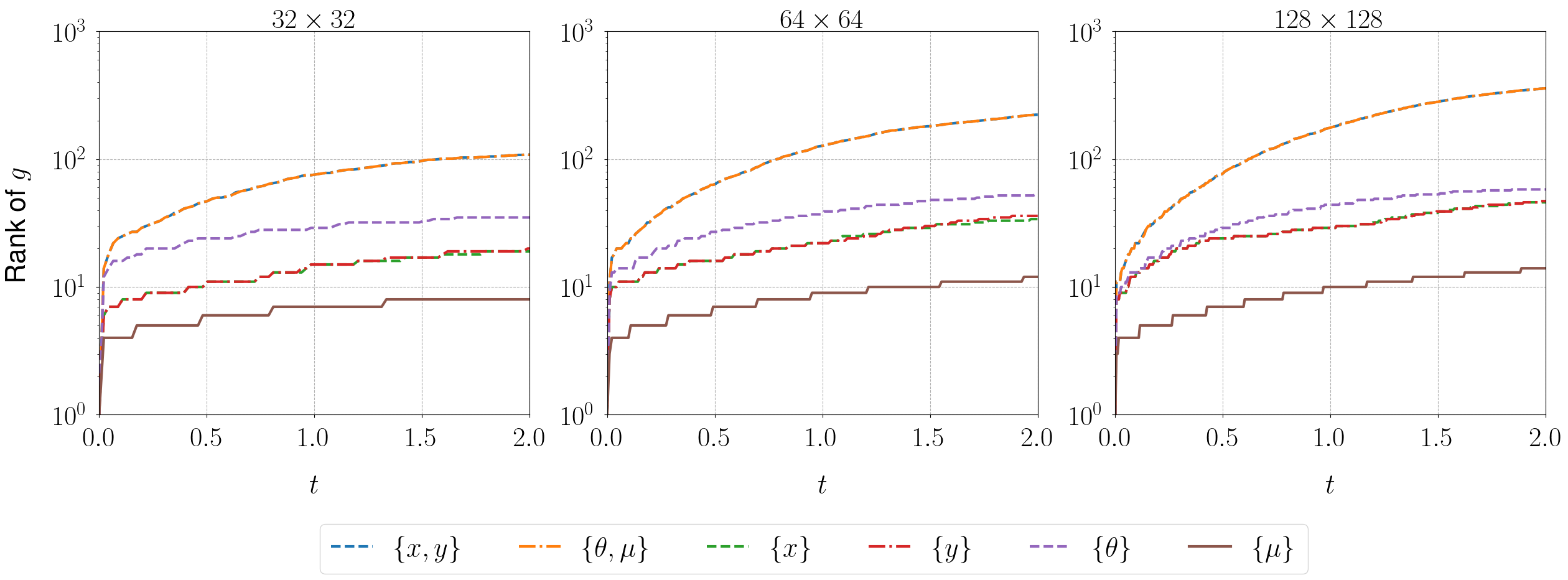}
        \label{fig:Lattice split ranks g projection}
    }
    \caption{Time evolution of hierarchical ranks in the lattice problem using unsplit (top row) and split (bottom row) representations of the tensor $g$.}
    \label{fig:lattice problem g rank vs time}
\end{figure}

% Lattice low-order (IMEX-1 + MUSCL) ranks
\begin{figure}
    \centering
    \includegraphics[clip, trim={0cm, 0cm, 0cm, 0cm}, scale=0.19]{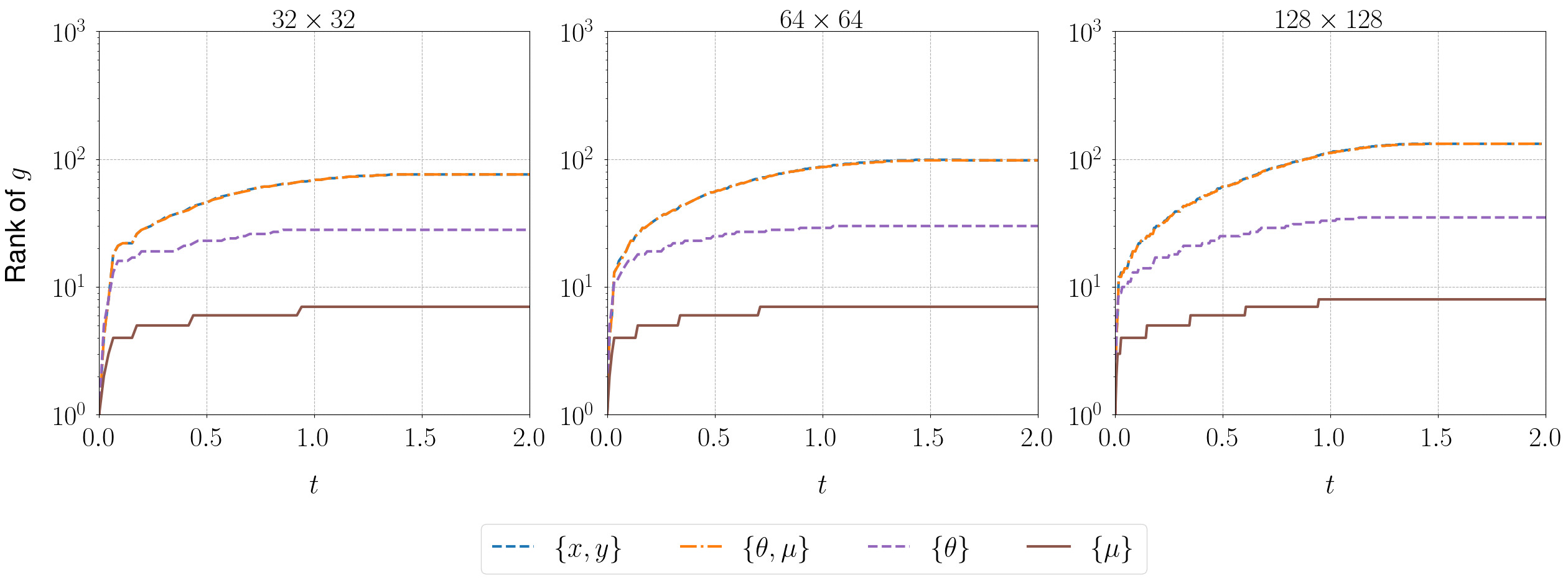}
    \caption{Time evolution of hierarchical ranks in the lattice problem obtained with an unsplit low-rank scheme based on low-order discretizations for time and space.}
    \label{fig:lattice low-order g ranks vs time}
\end{figure}

% Lattice dofs as a function of time and compression vs full order tensor
\begin{figure}
    \centering
    \subfloat[High-order unsplit]{
        \includegraphics[clip, trim={0cm, 0cm, 0cm, 0cm}, scale=0.19]{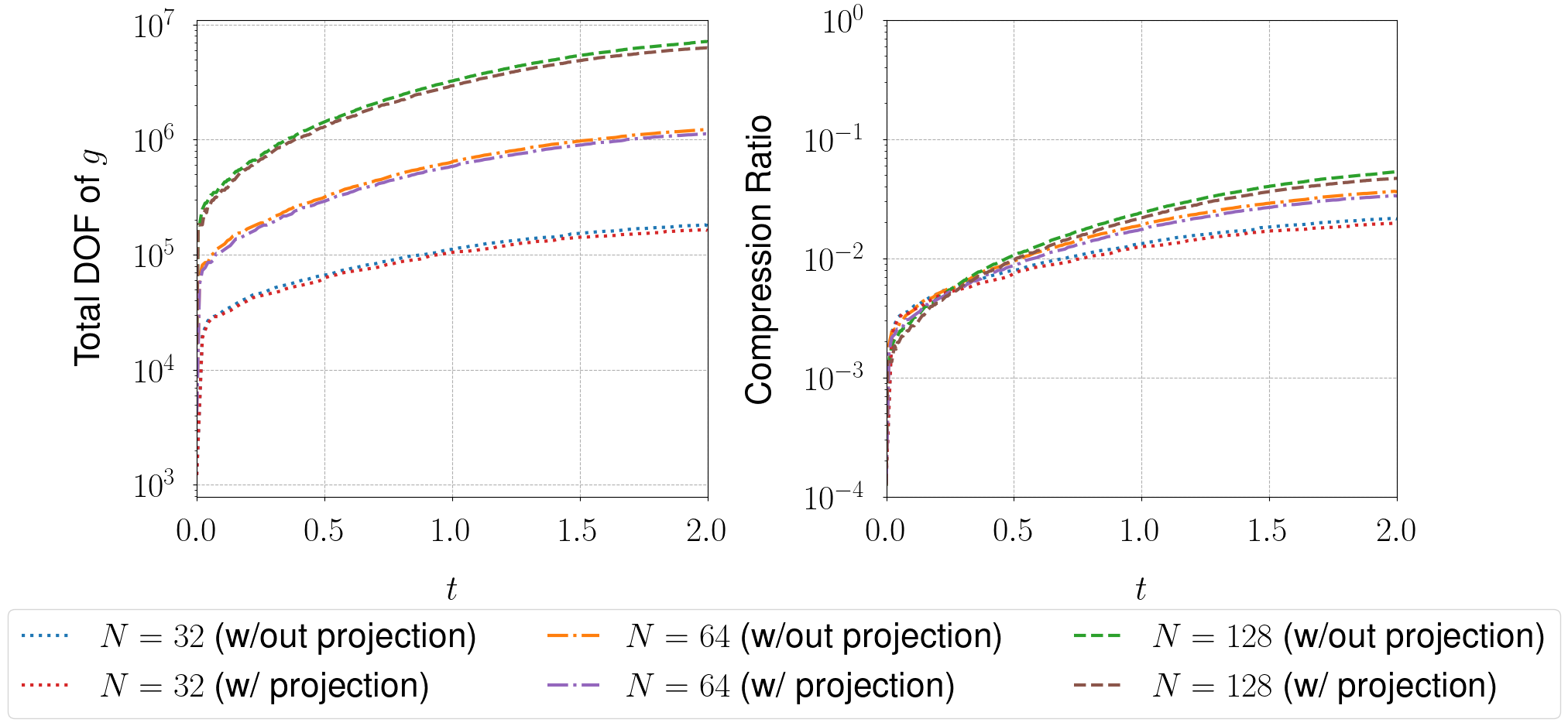}
    }
    \\
    \subfloat[High-order split]{
        \centering
        \includegraphics[clip, trim={0cm, 0cm, 0cm, 0cm}, scale=0.19]{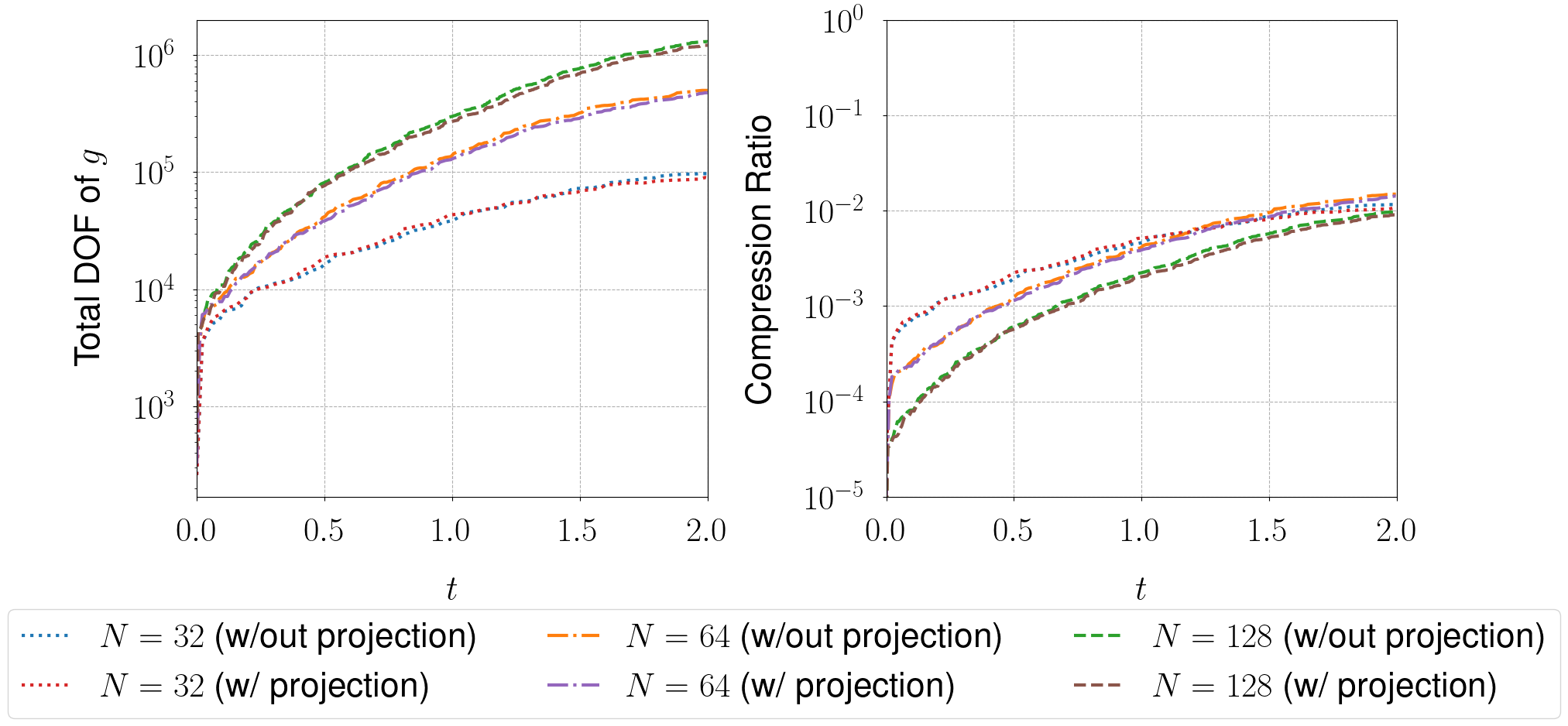}
    }
    \caption{Plots of the total DOF and compression ratio as functions of time for the lattice problem.}
    \label{fig:lattice DOF comparison}
\end{figure}

The growth in the total DOF and compression ratio for the low-rank methods is presented in \Cref{fig:lattice DOF comparison}. Despite the growth in the rank, we find that the low-rank methods still offer reasonable compression of the function $g$. For example, we find that unsplit low-rank method requires $\approx 4.67\%$ of the storage needed for the full-grid representation when a $128^{2}$ spatial mesh is used with an $S_{64}$ angular discretization. The same case, using the split approach, yields a compression ratio of $\approx 0.90\%$. As with the previous example, we find that the use of the projection technique does not have significant memory overhead in terms of the total DOF for the function $g$. Note that the unsplit method produces worse compression compared to the split method as the mesh resolution increases.

\section{Conclusions}
\label{sec:conclusion main}

This work contributed novel low-rank approximation techniques for the solution of the multi-scale linear kinetic transport equation. The approach we take utilizes a macro-micro decomposition of the distribution function to ensure that the proposed scheme recovers the AP property. The proposed low-rank methods support rank adaptivity and address challenges intrinsic to the CoD through the use of the HTT format. The new methods, which eliminate the projector-splitting used by existing low-rank methods, offer enhanced flexibility in coupling low-rank methods with high-order discretizations for both time and space. We presented a variety of numerical results to demonstrate the capabilities of the proposed methods. A key benefit of the proposed methods is the significant reduction in the storage costs relative to a full-grid implementation. We also demonstrated the impact of accuracy of the discretization on the low-rank structures, which indicates that great care should be taken when choosing discretizations. To accommodate more general boundary conditions in transport applications, these methods can be combined with discontinuous Galerkin discretizations. Additionally, we plan to extend the methods presented in this work to the implicit setting, which will further reduce the stiffness near the diffusion limit.

\bibliographystyle{siamplain}
\bibliography{ref}

\end{document}